
\documentclass[11pt]{article}%
\usepackage{graphicx}
\usepackage{amsmath}
\usepackage{amsfonts}
\usepackage{amssymb}%
\setcounter{MaxMatrixCols}{30}
\providecommand{\U}[1]{\protect\rule{.1in}{.1in}}
\setlength{\oddsidemargin}{-0.05in}
\setlength{\evensidemargin}{-0.05in}
\setlength{\textwidth}{6.5in}
\newtheorem{theorem}{Theorem}[section]

\textheight=8.5in

\newtheorem{definition}[theorem]{Definition}

\newtheorem{lemma}[theorem]{Lemma}

\newtheorem{proposition}[theorem]{Proposition}
\newtheorem{remark}[theorem]{Remark}

\newenvironment{proof}[1][Proof]{\textbf{#1.} }{\hfill\rule{0.5em}{0.5em}}
{\catcode`\@=11\global\let\AddToReset=\@addtoreset
\AddToReset{equation}{section}

\AddToReset{theorem}{section}

\begin{document}

\title{Local and global estimates of solutions of Hamilton-Jacobi parabolic equation
with absorption }
\author{Marie Fran\c{c}oise BIDAUT-VERON}
\date{}
\maketitle

\begin{abstract}
Here we show new apriori estimates for the nonnegative solutions of the
equation
\[
u_{t}-\Delta u+|\nabla u|^{q}=0
\]
in $Q_{\Omega,T}=\Omega\times\left(  0,T\right)  ,$ $T\leqq\infty,$ where
$q>0,$ and $\Omega=\mathbb{R}^{N},$ or $\Omega$ is a smooth bounded domain of
$\mathbb{R}^{N}$ and $u=0$ on $\partial\Omega\times\left(  0,T\right)  .$

In case $\Omega=\mathbb{R}^{N},$ we show that any solution $u\in
C^{2,1}(Q_{\mathbb{R}^{N},T})$ of equation (\ref{un}) in $Q_{\mathbb{R}^{N}%
,T}$ (in particular \textbf{ }any weak solution if $q\leqq2),$ without
condition as $\left\vert x\right\vert \rightarrow\infty,$ satisfies the
universal estimate
\[
\left\vert \nabla u(.,t)\right\vert ^{q}\leqq\frac{1}{q-1}\frac{u(.,t)}%
{t},\qquad\text{in }Q_{\mathbb{R}^{N},T}.
\]
Moreover we prove that the growth of $u$ is limited by $C(t+t^{-1/(q-1}%
)(1+\left\vert x\right\vert ^{q^{\prime}}),$ where $C$ depends on $u.$

We also give existence properties of solutions in $Q_{\Omega,T},$ for initial
data locally integrable or unbounded measures. We give a nonuniqueness result
in case $q>2.$ Finally we show that besides the local regularizing effect of
the heat equation, $u$ satisfies a second effect of type $L_{loc}^{R}%
-L_{loc}^{\infty},$ due to the gradient term.\bigskip

\textbf{Keywords } Hamilton-Jacobi equation; Radon measures; initial trace;
universal bounds., regularizing effects.\ 

\textbf{A.M.S. Subject Classification }35K15, 35K55, 35B33, 35B65, 35D30

\end{abstract}
\tableofcontents

\qquad\qquad\qquad\qquad\qquad\qquad\qquad\qquad\qquad\qquad\qquad\qquad
\qquad\qquad\qquad\qquad\qquad\qquad\qquad\qquad\qquad\qquad\qquad\qquad
\qquad\qquad\qquad\qquad\qquad\qquad\qquad\qquad\qquad\qquad\qquad
\qquad.\pagebreak

\section{Introduction\label{sec1}}

Here we consider the \textit{nonnegative} solutions of the parabolic
Hamilton-Jacobi equation%
\begin{equation}
u_{t}-\nu\Delta u+|\nabla u|^{q}=0, \label{un}%
\end{equation}
where $q>1,$ in $Q_{\Omega,T}=\Omega\times\left(  0,T\right)  ,$ where
$\Omega$ is any domain of $\mathbb{R}^{N},$ $\nu\in\left(  0,1\right]  .$ We
study the problem of apriori estimates of the \textit{nonnegative} solutions,
with possibly rough \textit{unbounded} initial data
\begin{equation}
u(x,0)=u_{0}\in\mathcal{M}^{+}(\Omega), \label{de}%
\end{equation}
where we denote by $\mathcal{M}^{+}(\Omega)$ the set of nonnegative Radon
measures in $\Omega,$ and $\mathcal{M}_{b}^{+}(\Omega)$ the subset of bounded
ones. We say that $u$ is a solution of (\ref{un}) if it satisfies (\ref{un})
in $Q_{\Omega,T}$ in the weak sense of distributions, see Section \ref{sec2}.
We say that $u$ \textit{has a trace }$u_{0}$ \textit{in} $\mathcal{M}%
^{+}(\Omega)$ if $u(.,t)$ converges to $u_{0}$ in the weak$^{\ast}$ topology
of measures:%
\begin{equation}
\lim_{t\rightarrow0}\int_{\Omega}u(.,t)\psi dx=\int_{\Omega}\psi du_{0}%
,\qquad\forall\psi\in C_{c}(\Omega). \label{mea}%
\end{equation}
Our purpose is to obtain apriori estimates valid for any solution in
$Q_{\Omega,T}=\Omega\times\left(  0,T\right)  $, without assumption on the
boundary of $\Omega,$ or for large $\left\vert x\right\vert $ if
$\Omega=\mathbb{R}^{N}.\medskip$

Fisrt recall some known results. The Cauchy problem in $Q_{\mathbb{R}^{N},T}$
\begin{equation}
(P_{\mathbb{R}^{N},T})\left\{
\begin{array}
[c]{l}%
u_{t}-\nu\Delta u+|\nabla u|^{q}=0,\quad\text{in}\hspace{0.05in}%
Q_{\mathbb{R}^{N},T},\\
u(x,0)=u_{0}\quad\quad\text{in}\hspace{0.05in}\mathbb{R}^{N},
\end{array}
\right.  \label{cau}%
\end{equation}
is the object of a rich literature, see among them \cite{AmBA},\cite{BeLa99},
\cite{BeBALa}, \cite{BASoWe}, \cite{SoZh},\cite{BiDao1}, \cite{BiDao2}, and
references therein. The first studies concern \textit{classical} solutions,
that means $u\in C^{2,1}(Q_{\mathbb{R}^{N},T}),$ with \textit{smooth bounded
initial data} $u_{0}\in C_{b}^{2}\left(  \mathbb{R}^{N}\right)  $: there a
unique global solution such that
\[
\left\Vert u(.,t)\right\Vert _{L^{\infty}(\mathbb{R}^{N})}\leqq\left\Vert
u_{0}\right\Vert _{L^{\infty}(\mathbb{R}^{N})},\text{ and }\left\Vert \nabla
u(.,t)\right\Vert _{L^{\infty}(\mathbb{R}^{N})}\leqq\left\Vert \nabla
u_{0}\right\Vert _{L^{\infty}(\mathbb{R}^{N})},\qquad\text{in }Q_{\mathbb{R}%
^{N},T},
\]
see \cite{AmBA}. Then universal apriori estimates of the gradient are obtained
\textit{for this solution}, by using the Bersnstein technique, which consists
in computing the equation satisfied by $|\nabla u|^{2}:$ first from
\cite{Li},
\[
\left\Vert \nabla u(.,t)\right\Vert _{L^{\infty}(\mathbb{R}^{N})}^{q}%
\leqq\frac{\left\Vert u_{0}\right\Vert _{L^{\infty}(\mathbb{R}^{N})}}{t},
\]
in $Q_{\mathbb{R}^{N},T},,$ then from \cite{BeLa99},%
\begin{equation}
\left\vert \nabla u(.,t)\right\vert ^{q}\leqq\frac{1}{q-1}\frac{u(.,t)}{t},
\label{verse}%
\end{equation}%
\begin{equation}
\Vert\nabla(u^{\frac{q-1}{q}})(.,t)\Vert_{L^{\infty}(\mathbb{R}^{N})}\leqq
Ct^{-1/2}\Vert u_{0}\Vert_{L^{\infty}(\mathbb{R}^{N})}^{\frac{q-1}{q}},\qquad
C=C(N,q,\nu). \label{car}%
\end{equation}
Existence and uniqueness was extended to any $u_{0}\in C_{b}\left(
\mathbb{R}^{N}\right)  $ in \cite{GiGuKe}; then the estimates (\ref{car}) and
(\ref{verse}) are still valid, see \cite{BeBALa}. In case of nonnegative rough
initial data $u_{0}\in L^{R}\left(  \mathbb{R}^{N}\right)  ,$ $R\geqq1,$ or
$u_{0}\in\mathcal{M}_{b}^{+}(\mathbb{R}^{N}),$ the problem was studied in a
semi-group formulation \cite{BeLa99}, \cite{BASoWe}, \cite{SoZh}, then in the
larger class of weak solutions in \cite{BiDao1}, \cite{BiDao2}. Recall that
two critical values appear: $q=2,$ where the equation can be reduced to the
heat equation, and
\[
q_{\ast}=\frac{N+2}{N+1}.
\]
Indeed the Cauchy problem with initial value $u_{0}=\kappa\delta_{0},$ where
$\delta_{0}$ is the Dirac mass at $0$ and $\kappa>0,$ has a weak solution
$u^{\kappa}$ if and only if $q<q_{\ast},$ see \cite{BeLa99}, \cite{BiDao1}.
Moreover as $\kappa\rightarrow\infty,$ $(u^{\kappa})$ converges to a unique
very singular solution $Y,$ see \cite{QW}, \cite{BeLa01}, \cite{BeKoLa},
\cite{BiDao1}. And $Y(x,t)=t^{-a/2}F(\left\vert x\right\vert /\sqrt{t}),\quad
$where
\begin{equation}
a=\frac{2-q}{q-1}, \label{vala}%
\end{equation}
and $F$ is bounded and has an exponential decay at infinity.\medskip

In \cite[Theorem 2.2]{BiDao2} it is shown that for any $R\geqq1$ global
regularizing $L^{R}$-$L^{\infty}$ properties of two types hold for the Cauchy
problem in $Q_{\mathbb{R}^{N},T}$ : one due to the heat operator:
\begin{equation}
\Vert u(.,t)\Vert_{L^{\infty}(\mathbb{R}^{N})}\leqq Ct^{-\frac{N}{2R}}\Vert
u_{0}\Vert_{L^{R}(\mathbb{R}^{N})},\qquad C=C(N,R,\nu), \label{efa}%
\end{equation}
and the other due to the gradient term, independent of $\nu$ ($\nu>0$)$:$%
\begin{equation}
\Vert u(.,t)\Vert_{L^{\infty}(\mathbb{R}^{N})}\leqq Ct^{-\frac{N}{qR+N(q-1)}%
}\Vert u_{0}\Vert_{L^{R}(\mathbb{R}^{N})}^{\frac{qR}{qR+N(q-1)}},\qquad
C=C(N,q,R). \label{efb}%
\end{equation}

A great part of the results has been extended to the Dirichlet problem in a
bounded domain $\Omega:$%
\begin{equation}
(P_{\Omega,T})\left\{
\begin{array}
[c]{l}%
u_{t}-\Delta u+|\nabla u|^{q}=0,\quad\text{in}\hspace{0.05in}Q_{\Omega,T},\\
u=0,\qquad\qquad\text{on}\hspace{0.05in}\partial\Omega\times(0,T),\\
u(x,0)=u_{0},
\end{array}
\right.  \label{1.1}%
\end{equation}
where $u_{0}\in\mathcal{M}_{b}^{+}(\Omega)$, and $u(.,t)$ converges to $u_{0}$
weakly in $\mathcal{M}_{b}^{+}(\Omega)$, see \cite{BeDa}, \cite{SoZh},
\cite{BiDao1}, \cite{BiDao2}. Universal estimates are given in \cite{CLS}, see
also \cite{BiDao1}. Note that (\ref{verse}) cannot hold, since it contradicts
the H\"{o}pf Lemma. \medskip

Finally local estimates in any domain $\Omega$ were proved in \cite{SoZh}: for
any classical solution $u$ in $Q_{\Omega,T}$ and any ball $B(x_{0}%
,2\eta)\subset\Omega,$ there holds in $Q_{B(x_{0},\eta),T}$%
\begin{equation}
\left\vert \nabla u\right\vert (.,t)\leqq C(t^{-\frac{1}{q}}+\eta^{-1}%
+\eta^{-\frac{1}{q-1}})(1+u(.,t)),\qquad C=C(N,q,\nu). \label{soup}%
\end{equation}

\subsection{Main results}

In Section \ref{sec3} we give \textit{local integral estimates} of the
solutions \textit{in terms of the initial data, and a first regularizing
effect, }local version of (\ref{efa}), see Theorem \ref{local}.\textit{ }

\begin{theorem}
Let $q>1.$ Let $u$ be any nonnegative weak solution of equation (\ref{un}) in
$Q_{\Omega,T}$, and let $B(x_{0},2\eta)\subset\subset\Omega$ such that $u$ has
a trace $u_{0}\in L_{loc}^{R}(\Omega),$ $R\geqq1$ and $u\in C(\left[
0,T\right)  ;L_{loc}^{R}(\Omega)).$ Then for any $0<t\leqq\tau<T,$%
\[
\sup_{x\in B(x_{0},\eta/2)}u(x,t)\leqq Ct^{-\frac{N}{2R}}(t+\left\Vert
u_{0}\right\Vert _{L^{R}(B(x_{0},\eta)}),\qquad C=C(N,q,\nu,R,\eta,\tau).
\]
If $R=1,$ the estimate remains true when $u_{0}\in\mathcal{M}^{+}(\Omega)$
(with $\left\Vert u_{0}\right\Vert _{L^{1}(B(x_{0},\eta)}$ replaced by
$\int_{B(x_{0},\eta)}du_{0}$).$\medskip$
\end{theorem}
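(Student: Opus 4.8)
The plan is to establish a local smoothing estimate of type $L^R_{loc}$–$L^\infty_{loc}$, following the strategy used for the global version \eqref{efa} but localizing it with a suitable cut-off function. The idea is that near the point $x_0$ the gradient term $|\nabla u|^q$ is favorable (it has the right sign), so that $u$ is a nonnegative subsolution of the heat equation; the only difficulty is that $u$ need not be controlled near $\partial\Omega$, which is why we truncate inside the ball $B(x_0,2\eta)$.

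First I would fix $\eta$ and pick a smooth cut-off $\xi\in C_c^\infty(B(x_0,\eta))$ with $\xi\equiv 1$ on $B(x_0,\eta/2)$, $0\le\xi\le 1$, and consider $v=\xi u$ (or, to avoid losing a power in the source term, a function of the form $v=\xi^{\lambda}u$ with $\lambda$ large). Since $u_t-\nu\Delta u=-|\nabla u|^q\le 0$, the function $v$ satisfies a linear parabolic inequality
\[
v_t-\nu\Delta v\le -2\nu\nabla\xi\cdot\nabla u-\nu u\Delta\xi\le C(\xi)\,(|\nabla u|+u)\qquad\text{in }Q_{B(x_0,\eta),T},
\]
with $v=0$ on the lateral boundary. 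The right-hand side still contains $|\nabla u|$, which one would either absorb by using the gradient bound of type \eqref{soup} (valid locally in any domain) or control by interpolation/Young's inequality against $u$ and a slightly smaller power of $|\nabla u|^q$; in the regime $q>1$ this is exactly where the extra structure is used, so one should organize the cut-off powers so that the leftover term is of the form $C u$ plus a harmless constant (hence the additive "$+t$" in the statement).

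Next, I would represent $v$ by the Duhamel formula for the Dirichlet heat semigroup $(e^{\nu t\Delta_{B}})_{t\ge0}$ on $B(x_0,\eta)$:
\[
v(\cdot,t)\le e^{\nu(t-s)\Delta_B}v(\cdot,s)+\int_s^t e^{\nu(t-\sigma)\Delta_B}\bigl(C u(\cdot,\sigma)+C\bigr)\,d\sigma,
\]
and use the $L^R$–$L^\infty$ kernel bound $\|e^{\nu r\Delta_B}f\|_\infty\le Cr^{-N/2R}\|f\|_{L^R(B)}$ together with $L^R$–$L^R$ contractivity on the source term. Letting $s\to0$, the first term contributes $Ct^{-N/2R}\|u_0\|_{L^R(B(x_0,\eta))}$ by the assumed trace property $u\in C([0,T);L^R_{loc})$; the Duhamel integral contributes $C\int_0^t (t-\sigma)^{-N/2R}(\|u(\cdot,\sigma)\|_{L^R(B(x_0,\eta))}+1)\,d\sigma$. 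One closes the argument by first deriving the uniform-in-time bound $\sup_{0<\sigma<\tau}\|u(\cdot,\sigma)\|_{L^R(B(x_0,\eta))}\le C(t+\|u_0\|_{L^R(B(x_0,2\eta))})$ from Theorem \ref{local}'s integral estimates (this is the "first regularizing effect" already announced), and then plugging it in; the time integral $\int_0^t(t-\sigma)^{-N/2R}\,d\sigma$ converges precisely when $N/2R<1$, and for $R=1$, $N=1$ or the borderline cases one iterates the smoothing (bootstrap $L^R\to L^{R'}$ with $R'>R$) finitely many times, as is standard.

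The main obstacle I anticipate is the treatment of the first-order term $-2\nu\nabla\xi\cdot\nabla u$ produced by the localization: it is not lower-order relative to $u$ unless one already has a gradient bound, and naively it would force an $L^R$ bound on $\nabla u$ rather than on $u$. The cleanest fix is to invoke the local gradient estimate \eqref{soup} (which holds for classical solutions in any domain, hence after approximation for weak solutions when $q\le2$, and by the structure of the problem in general), giving $|\nabla u(\cdot,\sigma)|\le C(\sigma^{-1/q}+1)(1+u(\cdot,\sigma))$ on $B(x_0,\eta)$; the extra factor $\sigma^{-1/q}$ is integrable against $(t-\sigma)^{-N/2R}$ after possibly shrinking $R$ by the bootstrap, and the product $\sigma^{-1/q}u$ is again controlled by the $L^R$ bound on $u$. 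Once this term is under control everything else is a routine combination of heat-kernel estimates and the already-established integral bounds, and the dependence of the constant on $N,q,\nu,R,\eta,\tau$ comes out as claimed.
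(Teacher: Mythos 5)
Your route --- cut off $u$, represent $\xi u$ by Duhamel's formula for the Dirichlet heat semigroup on $B(x_0,\eta)$, and bootstrap the $L^R$--$L^\infty$ smoothing --- is genuinely different from the paper's, and the obstacle you correctly single out (the commutator term $\nabla\xi\cdot\nabla u$) is precisely where it has a real gap. Your proposed fix is to invoke the gradient bound (\ref{soup}), but that estimate is only available for classical solutions; the theorem is asserted for arbitrary nonnegative weak solutions and all $q>1$, and for $q>2$ a weak solution need not be classical (Theorem \ref{twosol} exhibits one, $\tilde U\notin C^1$), so the appeal to (\ref{soup}) ``by the structure of the problem in general'' is not justified. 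Even granting it, the bootstrap with the extra $\sigma^{-1/q}$ singularity over a chain of shrinking balls is left entirely schematic, and the uniform-in-time bound $\sup_\sigma\Vert u(\cdot,\sigma)\Vert_{L^R(B(x_0,\eta))}\le C(t+\Vert u_0\Vert_{L^R(B(x_0,2\eta))})$ that you plug into the Duhamel integral must be cited from Lemma \ref{cor}, not from Theorem \ref{local} itself, which is the statement being proved.

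The paper's proof avoids the commutator issue altogether: since $u_t-\nu\Delta u=-|\nabla u|^q\le0$, the function $u$ is $\nu$-subcaloric, and so is $u^R$ (after mollifying, $u_{\delta,n}=(u+\delta)\ast\varphi_n$, using the convexity of $s\mapsto s^R$). The purely interior sub-mean-value inequality (\ref{nusc}) applied to $u^R$ gives $\sup_{B_{\eta/2}}u^R(\cdot,t)\le C\rho^{-(N+2)}\int_{t-\rho^2/4}^{t}\int_{B_\eta}u^R$ with no boundary term and no gradient term whatsoever; combining with the integral bounds of Lemma \ref{cor} and choosing $\rho^2\sim t/(k+1)$ on each dyadic time slab yields the stated estimate directly, for all $q>1$ and all weak solutions. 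If you wish to keep the semigroup formulation, you must either first establish an interior Moser-type bound of the kind (\ref{nusc}) (at which point Duhamel is superfluous), or integrate by parts to move the derivative in $\nabla\xi\cdot\nabla u$ onto the heat kernel at the cost of an extra $(t-\sigma)^{-1/2}$ in the bootstrap; as written, the argument does not cover $q>2$.
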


In Section \ref{sec4}, we give \textit{global estimates} of the solutions of
(\ref{un}) in $Q_{\mathbb{R}^{N},T},$ and this is our main result. We show
that \textit{the universal estimate (\ref{verse}) in }$\mathbb{R}^{N}$\textit{
holds without assuming that the solutions are initially bounded}:

\begin{theorem}
\label{fund}Let $q>1.$ Let $u$ be any classical solution, in particular
\textbf{ any weak solution} if $q\leqq2,$ of equation (\ref{un}) in
$Q_{\mathbb{R}^{N},T}.$ Then
\begin{equation}
\left\vert \nabla u(.,t)\right\vert ^{q}\leqq\frac{1}{q-1}\frac{u(.,t)}%
{t},\qquad\text{in }Q_{\mathbb{R}^{N},T}. \label{versa}%
\end{equation}

\end{theorem}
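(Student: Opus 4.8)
The plan is to adapt the Bernstein technique — which gives \eqref{verse} for bounded data — to the setting of an arbitrary classical solution with no growth control at infinity, the difficulty being precisely that one can no longer evaluate quantities at a global maximum. First I would set $w=|\nabla u|^{2}$ and, following \cite{BeLa99}, compute the parabolic inequality it satisfies: differentiating \eqref{un}, one gets $w_{t}-\nu\Delta w+2\nu|D^{2}u|^{2}+q|\nabla u|^{q-2}\nabla u\cdot\nabla w=0$, so in particular $w_{t}-\nu\Delta w+q|\nabla u|^{q-2}\nabla u\cdot\nabla w\leqq 0$, i.e. $w$ is a subsolution of a linear parabolic operator with smooth (locally bounded) coefficients. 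Then I would consider the candidate function
\[
J=t\,|\nabla u|^{q}-\frac{1}{q-1}\,u
\]
and derive the differential inequality it satisfies. A direct computation (using $|\nabla u|^{q}=\tfrac q2 w^{q/2-1}\cdot w$ and the inequalities above, together with the equation for $u$ to replace $u_{t}$) should show that $J$ is a subsolution of a linear parabolic operator $\mathcal{L}J:=J_{t}-\nu\Delta J+b\cdot\nabla J\leqq 0$ for a suitable smooth vector field $b=b(x,t)$ depending on $\nabla u$ and $D^{2}u$; the algebra is exactly the one behind \eqref{verse}, only now carried out pointwise rather than at a maximum point. The claim \eqref{versa} is then $J\leqq 0$ in $Q_{\mathbb{R}^{N},T}$, and we know $J(\cdot,0)=-\tfrac{1}{q-1}u\leqq 0$.

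The main obstacle — and the genuinely new content relative to \cite{BeLa99} — is turning the subsolution property of $J$ into the sign conclusion $J\leqq 0$ without any hypothesis on $u$ as $|x|\to\infty$. A bare maximum principle fails here because $\mathcal{L}$ has unbounded coefficients (through $\nabla u$, $D^{2}u$) and $u$ itself is a priori unbounded. The plan is a localization/barrier argument: fix a point $(x_{0},t_{0})$, work on cylinders $B(x_{0},\rho)\times(0,t_{0}]$, and add a corrective barrier $\Phi_{\rho}(x,t)=\varepsilon\big(1+|x-x_{0}|^{2}\big)^{\alpha}e^{\lambda t}$ (or the exponential-of-a-quadratic type barrier used in Tychonoff-class uniqueness) designed so that $\mathcal{L}(J-\Phi_{\rho})<0$, so that $J-\Phi_{\rho}$ cannot have a positive interior maximum; on the lateral boundary $|x-x_{0}|=\rho$ one needs $\Phi_{\rho}$ to dominate $J$, which is where the a priori growth bound $u(x,t)\leqq C(t+t^{-1/(q-1)})(1+|x|^{q'})$ announced in the abstract (and presumably proved just before this theorem, via the local estimate \eqref{soup} of \cite{SoZh} integrated along characteristics) is used: it controls both $u$ and, through \eqref{soup}, $|\nabla u|^{q}$ by a polynomial in $|x|$, so $\Phi_{\rho}$ with $\alpha>q'/2$ (or the exponential barrier) beats $J$ on the lateral boundary for $\rho$ large. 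Then letting $\rho\to\infty$ with $\varepsilon$ fixed gives $J-\Phi_{\rho}\leqq 0$ on each fixed compact set in the limit... more carefully, one lets $\rho\to\infty$ first, obtaining $J\leqq\varepsilon(1+|x-x_{0}|^{2})^{\alpha}e^{\lambda t}$, then $\varepsilon\to 0$, yielding $J(x_{0},t_{0})\leqq 0$.

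Two technical points I would be careful about. First, the coefficient $|\nabla u|^{q-2}$ is singular where $\nabla u=0$ when $1<q<2$; as in \cite{BeLa99} this is handled by working with $w_{\delta}=|\nabla u|^{2}+\delta$ and the regularized quantity $J_{\delta}=t(|\nabla u|^{2}+\delta)^{q/2}-\tfrac{1}{q-1}u-C\delta^{q/2}t$, deriving the inequality for $J_{\delta}$, and letting $\delta\to 0$ at the end. Second, for the "weak solution if $q\leqq 2$" part of the statement: when $q\leqq 2$ any weak solution is in fact $C^{2,1}$ by the standard parabolic regularizing effect of the heat operator applied to the locally-$L^{\infty}$ datum $|\nabla u|^{q}$ (bootstrap with interior Schauder estimates, using that $u$ and $\nabla u$ are locally bounded by the first regularizing effect of Theorem~\ref{local} and by \eqref{soup}); so the two cases reduce to one. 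I would present the proof for $u$ classical, remarking that $q\leqq 2$ weak solutions are covered by this regularity reduction, then do the $J_{\delta}$ computation, then the barrier argument, in that order, with the barrier step being the crux.
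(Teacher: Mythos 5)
Your overall strategy (a Bernstein identity plus a Phragm\'en--Lindel\"of type maximum principle at infinity, supported by an a priori growth bound) is the same as the paper's, and your computation for $J=t|\nabla u|^{q}-\tfrac{1}{q-1}u$ does close: one finds $J_{t}-\nu\Delta J+b\cdot\nabla J\leqq -c_{q}\nu t\,|\nabla u|^{q-2}|D^{2}u|^{2}\leqq0$ with $b=q|\nabla u|^{q-2}\nabla u$. The gaps are in the two inputs you treat as available. First, the growth bound $u\lesssim t^{-1/(q-1)}(1+|x|^{q'})$ cannot be extracted from \eqref{soup}: sending $\eta\to\infty$ in \eqref{soup} gives only $|\nabla\log(1+u)|\leqq Ct^{-1/q}$, hence \emph{exponential} growth in $|x|$; to get $|x|^{q'}$ you would need $|\nabla(u^{1/q'})|\lesssim t^{-1/q}$, which is precisely the conclusion \eqref{versa} you are trying to prove. (Theorem \ref{growth} itself is deduced \emph{from} Theorem \ref{fund}, so invoking it would be circular.) The paper instead proves a nonuniversal version of the growth bound (Theorem \ref{nice}) by constructing explicit supersolutions $V_{\rho}=e^{t}\Phi\psi_{h}$ in annuli $B_{3\rho}\setminus\overline{B_{\rho}}$ and iterating over dyadic annuli; this construction is the indispensable preliminary step missing from your plan.

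Second, even granting the polynomial growth bound, your barrier step fails as written: the drift in the $J$-equation is $|b|=q|\nabla u|^{q-1}$, which under $u\lesssim|x|^{q'}$ and \eqref{soup} is only controlled by $(1+|x|)^{q'(q-1)}=(1+|x|)^{q}$, i.e.\ superlinearly. For a drift of unknown sign growing like $|x|^{q}$ with $q>1$, no barrier of the form $\varepsilon(1+|x|^{2})^{\alpha}e^{\lambda t}$ (nor a Tychonoff exponential) satisfies $\mathcal{L}\Phi\geqq0$, since $|b||\nabla\Phi|\sim|x|^{q+2\alpha-1}$ dominates $\Phi_{t}\sim|x|^{2\alpha}$. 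The paper circumvents this with two devices you omit: it works with $v=(\delta+u)^{(q-1)/q}$ and $w=|\nabla v|^{2}$, whose drift is $\bigl(qcvw^{\frac{q-2}{2}}-\tfrac{2}{q-1}\tfrac1v\bigr)\nabla v$; and it first establishes a \emph{nonuniversal} bound $w\leqq\overline{K}t^{-2/q}$ by a localized (cut-off) Bernstein argument on balls $B_{M}$, $M\to\infty$. Only after $w$ is known to be bounded and $v\lesssim t^{-1/q}(1+|x|)$ does the drift become at most linear in $|x|$, which is exactly the hypothesis of the comparison Lemma \ref{prin} (which also requires $\mathrm{ess}\sup U<\infty$, another reason the nonuniversal step cannot be skipped); the sharp constant is then obtained in a second pass. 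Your proposal collapses these two passes into one and applies the maximum principle to an a priori unbounded quantity with a superlinear drift, which is where it breaks down.
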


And we prove that \textit{the growth of the solutions is limited, in
}$\left\vert x\right\vert ^{q^{\prime}}$\textit{as }$\left\vert x\right\vert
\rightarrow\infty$\textit{ and in}\textbf{ }$t^{-1/(q-1)}$\textbf{
}\textit{as}\textbf{ }$t\rightarrow0$:

\begin{theorem}
\label{growth}Let $q>1.$ Let $u$ be any classical solution, in particular
\textbf{ any weak solution} if $q\leqq2,$ of equation (\ref{un}) in
$Q_{\mathbb{R}^{N},T},$ such that there exists a ball $B(x_{0},2\eta)$ such
that $u$ has a trace $u_{0}\in\mathcal{M}^{+}((B(x_{0},2\eta)).$ Then%
\begin{equation}
u(x,t)\leqq C(q)t^{-\frac{1}{q-1}}\left\vert x-x_{0}\right\vert ^{q^{\prime}%
}+C(t^{-\frac{1}{q-1}}+t+\int_{B(x_{0},\eta)}du_{0}),\qquad C=C(N,q,\eta).
\label{esta}%
\end{equation}

\end{theorem}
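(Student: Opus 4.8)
The plan is to combine the universal gradient estimate of Theorem \ref{fund} with the local sup-bound of Theorem \ref{local} (in the measure case $R=1$), integrating the gradient bound along rays emanating from the ball $B(x_0,2\eta)$. First I would fix $t\in(0,T)$ and rewrite \eqref{versa} in the integrated form
\[
\left\vert \nabla\bigl(u(.,t)^{1/q'}\bigr)\right\vert \leqq \frac{1}{q'}\Bigl(\frac{1}{q-1}\Bigr)^{1/q}t^{-1/q},
\]
since $q'=q/(q-1)$ and $\tfrac{1}{q'}\tfrac{1}{(q-1)^{1/q}}$ is a constant depending only on $q$; call it $c(q)$. Thus $w:=u(.,t)^{1/q'}$ is globally Lipschitz on $\mathbb{R}^N$ with constant $c(q)t^{-1/q}$, for each fixed $t$.

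Next I would control $w$ at one reference point. By Theorem \ref{local} applied with this ball and $R=1$, for any $0<t\leqq\tau<T$ we have a bound
\[
\sup_{x\in B(x_0,\eta/2)}u(x,t)\leqq C_0\, t^{-N/2}\Bigl(t+\int_{B(x_0,\eta)}du_0\Bigr),\qquad C_0=C_0(N,q,\nu,\eta,\tau),
\]
so in particular $w(x_0)=u(x_0,t)^{1/q'}\leqq \bigl(C_0 t^{-N/2}(t+\int du_0)\bigr)^{1/q'}$. Then for arbitrary $x$, the Lipschitz bound gives
\[
w(x)\leqq w(x_0)+c(q)t^{-1/q}\left\vert x-x_0\right\vert,
\]
and raising to the power $q'$ and using $(a+b)^{q'}\leqq 2^{q'-1}(a^{q'}+b^{q'})$ yields
\[
u(x,t)\leqq 2^{q'-1}\Bigl(C_0 t^{-N/2}\bigl(t+\int_{B(x_0,\eta)}du_0\bigr) + c(q)^{q'}t^{-q'/q}\left\vert x-x_0\right\vert^{q'}\Bigr).
\]
Since $q'/q=1/(q-1)$, the second term already has the exact form $C(q)t^{-1/(q-1)}\left\vert x-x_0\right\vert^{q'}$ claimed in \eqref{esta}. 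The first term is bounded by $C(t^{-N/2}+t^{1-N/2})\bigl(1+\int du_0\bigr)$; if $N\geqq 3$ this is weaker than the stated $t^{-1/(q-1)}$-type bound for small $t$ and would need a slight sharpening.

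The main obstacle is precisely reconciling the $t^{-N/2}$ factor from the heat-type regularizing effect \eqref{efa}/Theorem \ref{local} with the sharper $t^{-1/(q-1)}$ growth asserted in \eqref{esta}. The resolution is to not use Theorem \ref{local} directly at scale $\eta$, but to exploit the second, gradient-driven regularizing effect \eqref{efb} (with $R=1$), which gives $\sup u(.,t)\leqq C t^{-N/(q+N(q-1))}(\cdots)$; for $t$ small this exponent is strictly smaller than $N/2$, but still typically larger than $1/(q-1)$ when $N$ is large, so even this is not quite enough pointwise. The genuinely correct route, which I expect the authors take, is to bootstrap the Lipschitz bound on $w=u^{1/q'}$ itself: once we know $w$ is Lipschitz with constant $\lesssim t^{-1/q}$, a mean-value / covering argument comparing the integral $\int_{B(x_0,\eta)}u(.,t)\,dx\lesssim t+\int du_0$ (the local $L^1$ estimate, valid for $R=1$) against the pointwise values forces $\inf_{B(x_0,\eta/2)}w\lesssim \eta^{-N/q'}\bigl(t+\int du_0\bigr)^{1/q'} + \eta^{1/q'}\!\cdot\! c(q)t^{-1/q}\eta$, i.e.\ one trades the sup-bound for an average and picks up only $\eta$-powers and a $t^{-1/q}$ coming from the gradient, whose $q'$-th power is $t^{-1/(q-1)}$. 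Combining this reference value with the global Lipschitz estimate and absorbing all $\eta$-dependence into $C(N,q,\eta)$ then gives exactly \eqref{esta}, with the $t^{-1/(q-1)}+t+\int du_0$ remainder. I would also remark that the hypothesis "$u$ has a trace $u_0\in\mathcal{M}^+(B(x_0,2\eta))$" is used only to make sense of $\int_{B(x_0,\eta)}du_0$ as the $t\to0$ limit of $\int_{B(x_0,\eta)}u(.,t)$, via the local $L^1$ bound of Section \ref{sec3}.
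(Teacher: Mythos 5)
Your final route is exactly the paper's: the proof of Theorem \ref{top} reverses the Lipschitz inequality for $u^{1/q'}$ to get $u(x_{0},t)\leqq C(q)\bigl(u(x,t)+t^{-\frac{1}{q-1}}\left\vert x-x_{0}\right\vert ^{q^{\prime}}\bigr)$, integrates it over $x\in B(x_{0},\eta/2)$, and invokes the local $L^{1}$ estimate (\ref{zof}) of Lemma \ref{cor}, so that the only negative power of $t$ comes from the gradient term; Theorem \ref{growth} is then stated as a direct consequence of Theorems \ref{fund} and \ref{top}. Your initial detour through the $t^{-N/2}$ sup-bound is correctly diagnosed as insufficient, and the ``trade the sup for an average'' fix you then describe is precisely the argument the authors use.
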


In \cite{BiDao3}, we show that there exist solutions with precisely this type
of behaviour of order $t^{-1/(q-1)}\left\vert x\right\vert ^{q^{\prime}}$ as
$\left\vert x\right\vert \rightarrow\infty$\textbf{ }or \textbf{
}$t\rightarrow0$. Moreover we prove that the condition on the trace is always
satisfied for $q<q_{\ast}.\medskip$

In Section \ref{sec5} we complete the study by giving \textit{existence
results} with only \textit{local assumptions on }$u_{0}$, extending some
results of \cite{BeBALa} where $u_{0}$ is continuous, and \cite{BASoWe},
\cite{BiDao2}, where the assumptions are global:

\begin{theorem}
\label{exim}Let $\Omega=\mathbb{R}^{N}$ (resp. $\Omega$ bounded).\medskip

(i) If $1<q<q_{\ast}$, then for any $u_{0}\in\mathcal{M}^{+}\left(
\mathbb{R}^{N}\right)  $ (resp. $\mathcal{M}^{+}\left(  \Omega\right)  $),
there exists a weak solution $u$ of equation (\ref{un}) (resp. of
$(D_{\Omega,T})$) with trace $u_{0}$.\medskip

(ii) If $q_{\ast}\leqq q\leqq2,$ then existence still holds for any
nonnegative $u_{0}\in L_{loc}^{1}\left(  \mathbb{R}^{N}\right)  $ (resp.
$L_{loc}^{1}\left(  \Omega\right)  $). And then $u\in C(\left[  0,T\right)
;L_{loc}^{1}\left(  \mathbb{R}^{N}\right)  $ (resp. $u\in C(\left[
0,T\right)  ;L_{loc}^{1}\left(  \Omega\right)  ).$\medskip

(iii) $If$ $q>2,$ existence holds for any nonnegative $u_{0}\in L_{loc}%
^{1}\left(  \mathbb{R}^{N}\right)  $ (resp. $L_{loc}^{1}\left(  \Omega\right)
$) which is limit of a nondecreasing sequence of continuous
functions.$\medskip$
\end{theorem}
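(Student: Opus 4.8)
The plan is to construct solutions by approximation, truncating the initial data and (when $\Omega=\mathbb{R}^N$) exhausting the domain by balls, and then passing to the limit using the apriori estimates from Sections \ref{sec3} and \ref{sec4}. Concretely, first I would fix a sequence $u_{0,k}$ of bounded continuous data with compact support, increasing to $u_0$: in cases (ii)--(iii) one simply takes $u_{0,k}=\min(u_0,k)\chi_{B_k}$ (in case (iii) one must instead use the given nondecreasing sequence of continuous functions and truncate those, since $\min(u_0,k)$ need not be an increasing limit of continuous functions when $q>2$ and monotone approximation is the only tool available in the absence of an $L^1$-contraction); in case (i), for a measure $u_0$ one regularizes by convolution and truncates. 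For $\Omega$ bounded one solves $(P_{\Omega,T})$ with data $u_{0,k}$; for $\Omega=\mathbb{R}^N$ one solves the Dirichlet problem on $B_k$ with zero boundary data and data $u_{0,k}$, obtaining classical solutions $u_k$ by the known theory (\cite{AmBA}, \cite{GiGuKe}, \cite{BeDa}). By the comparison principle the sequence $(u_k)$ is nondecreasing in $k$, so $u=\lim_k u_k$ exists pointwise (possibly $+\infty$ a priori).

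The heart of the argument is to show $u$ is finite and is a genuine weak solution with the right trace. For finiteness and local compactness I would invoke Theorem \ref{local}: since each $u_{0,k}\leqq u_0$ locally in $L^1$ (or in the measure norm), the local sup bound $\sup_{B(x_0,\eta/2)}u_k(\cdot,t)\leqq Ct^{-N/(2R)}(t+\|u_0\|_{L^R(B(x_0,\eta))})$ holds with $C$ independent of $k$; this gives a uniform local $L^\infty$ bound away from $t=0$. Combined with the universal gradient estimate \eqref{versa} of Theorem \ref{fund} (applicable after extending $u_k$ suitably, or directly via \eqref{soup} of \cite{SoZh} for the local version), one gets uniform local bounds on $|\nabla u_k|$ as well, hence by parabolic regularity (Schauder/$L^p$ estimates applied to the equation $u_{k,t}-\nu\Delta u_k=-|\nabla u_k|^q$ with locally bounded right-hand side) uniform local $C^{2,1}$-type bounds on compact subsets of $Q_{\Omega,T}$. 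Passing to a subsequence, $u_k\to u$ in $C^{2,1}_{loc}$, so $u$ solves \eqref{un} classically in $Q_{\Omega,T}$; in case $q\leqq2$ one may alternatively keep only the weak formulation.

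The remaining point is the initial trace: one must show $u(\cdot,t)\rightharpoonup u_0$ as $t\to0$. For this I would integrate the equation against a test function $\psi\in C_c^\infty$, obtaining $\frac{d}{dt}\int u_k\psi = \nu\int u_k\Delta\psi - \int|\nabla u_k|^q\psi$; using the local bound $\int_{B}|\nabla u_k|^q\,dx\,dt\leqq C$ uniformly (a consequence of multiplying the equation by a cutoff and the gradient estimate, or of the local integral estimates behind Theorem \ref{local}), the map $t\mapsto\int u_k(\cdot,t)\psi$ is equicontinuous near $0$, and since $\int u_{0,k}\psi\to\int\psi\,du_0$ by monotone convergence, letting $k\to\infty$ and then $t\to0$ gives \eqref{mea}. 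In cases (ii) the $C([0,T);L^1_{loc})$ regularity follows from the uniform local $L^1$ bound on $u_{k,t}$ on time-strips $[\delta,\tau]$ together with an equi-integrability argument near $t=0$ coming from the contraction $\|u_k(\cdot,t)-u_{0,k}\|_{L^1(B)}\to0$, valid since $q\leqq 2$ lets us compare with the heat semigroup. The main obstacle I anticipate is precisely the borderline range $q_*\leqq q\leqq 2$ in (ii): here $u_0$ is only in $L^1_{loc}$, not a bounded measure, so the very-singular-solution obstruction at $q_*$ is avoided only because point masses are excluded, and one has to be careful that the uniform local estimates (which degrade like $t^{-N/(2R)}$ with $R=1$) are still integrable enough in $t$ near $0$ to produce the $L^1_{loc}$-continuity up to $t=0$ rather than merely a weak-$*$ trace.
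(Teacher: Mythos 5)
Your overall scheme --- monotone truncation of the initial data, uniform local bounds from Theorem \ref{local}, compactness in $C^{2,1}_{loc}$ via Theorem \ref{gul} (or Remark \ref{gil} when $q>2$), then identification of the trace --- is the same as the paper's. The genuine gap is in the trace identification, which is the crux of the theorem in the range $q_{\ast}\leqq q\leqq2$. You claim that the uniform bound $\int_0^\tau\int_B|\nabla u_k|^q\,dx\,dt\leqq C$ makes $t\mapsto\int u_k(\cdot,t)\psi$ equicontinuous near $t=0$. It does not: equicontinuity requires $\int_0^t\int|\nabla u_k|^q\psi\to0$ as $t\to0$ \emph{uniformly in $k$}, i.e.\ equiintegrability in time of the gradient term, and a uniform $L^1$ bound never yields that by itself. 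This is exactly the mechanism by which initial mass is lost when $q\geqq q_{\ast}$: for $u_0=\kappa\delta_0$ the approximating solutions satisfy the same uniform bounds, yet $\int_0^t\int|\nabla u_k|^q\psi$ tends to a fixed positive quantity as $k\to\infty$ for every $t>0$, and the limit does not recover the trace $\kappa\delta_0$. So in case (ii) your argument, as written, does not establish \eqref{mea}.

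The paper resolves this differently in the two regimes. For $q<q_{\ast}$ it derives from the Baras--Pierre estimate that $(u_n)$ is bounded in $L^k_{loc}([0,T);W^{1,k}_{loc})$ for every $k<q_{\ast}$, whence $(|\nabla u_n|^q)$ is equiintegrable precisely because $q<q_{\ast}$; only then can one pass to the limit in the weak formulation \eqref{bou} and read off the trace --- this is the missing mechanism behind your equicontinuity claim, and it is unavailable for $q\geqq q_{\ast}$. For $q_{\ast}\leqq q\leqq2$ the paper instead uses a two-sided squeeze exploiting monotonicity: Remark \ref{trac} guarantees that the limit $u$ has \emph{some} trace $\mu_0\in\mathcal{M}^+(\Omega)$; the inequality $u_n\leqq u$ gives $u_0\leqq\mu_0$; and Fatou's lemma applied to the a.e.-convergent gradients in the identity satisfied by $u_n$ gives the reverse inequality $\mu_0\leqq u_0$. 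This is also why your suggestion to mollify the measure in case (i) is problematic: convolution destroys the monotonicity of the approximating data (the paper restricts $u_0$ to $B_n$ instead), and monotonicity is what drives both the existence of the pointwise limit and the squeeze. Finally, the $C([0,T);L^1_{loc})$ continuity in (ii) is obtained in the paper not by an $L^1$-contraction with the heat semigroup but by a uniqueness argument: $z=u\xi$ solves a linear Dirichlet heat problem with $L^1$ source and initial datum $\xi u_0\in L^1$, and the unique solution of that problem lies in $C([0,T);L^1(\omega))$.
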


Moreover we give a result of \textit{nonuniqueness} of weak solutions in case
$q>2:$

\begin{theorem}
\label{twosol}Assume that $q>2,$ $N\geq2.$Then the Cauchy problem
$(P_{\mathbb{R}^{N},\infty})$ with initial data
\[
\tilde{U}(x)=\tilde{C}\left\vert x\right\vert ^{\left\vert a\right\vert }\in
C\left(  \mathbb{R}^{N}\right)  ,\qquad\tilde{C}=\frac{q-1}{q-2}%
(\frac{(N-1)q-N)}{q-1})^{\frac{1}{q-1}},
\]
admits at least two weak solutions: the stationary solution $\tilde{U},$ and a
radial self-similar solution of the form
\begin{equation}
U_{\tilde{C}}(x,t)=t^{\left\vert a\right\vert /2}f(\left\vert x\right\vert
/\sqrt{t}), \label{fom}%
\end{equation}
where $f$ is increasing on $\left[  0,\infty\right)  ,$ $f(0)>0,$ and
$\lim_{\eta\rightarrow\infty}\eta^{-\left\vert a\right\vert /2}f(\eta
)=\tilde{C}.\medskip$
\end{theorem}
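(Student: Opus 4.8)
The plan is to construct the self-similar solution $U_{\tilde C}$ by the classical reduction of the PDE to an ODE, and then to observe that both $U_{\tilde C}$ and the stationary solution $\tilde U$ are genuine weak solutions with the same initial trace, whence nonuniqueness. First, I would substitute the ansatz (\ref{fom}), $U_{\tilde C}(x,t)=t^{|a|/2}f(\eta)$ with $\eta=|x|/\sqrt t$, into (\ref{un}) (with $\nu=1$, the case $q>2$ giving $|a|=\tfrac{q-2}{q-1}>0$). A direct computation turns the equation into a second-order autonomous-type ODE for $f$ on $[0,\infty)$, of the schematic form
\[
f''+\Bigl(\frac{N-1}{\eta}+\frac{\eta}{2}\Bigr)f'-\frac{|a|}{2}f+|f'|^{q}\eta^{\,q-?}=0,
\]
which I would record precisely after the change of variables; the constant $\tilde C=\frac{q-1}{q-2}\bigl(\frac{(N-1)q-N}{q-1}\bigr)^{1/(q-1)}$ is exactly the value forcing the pure power $\tilde C\eta^{|a|}$ to be a particular solution of this ODE (equivalently, $\tilde U(x)=\tilde C|x|^{|a|}$ solves the stationary equation $-\Delta\tilde U+|\nabla\tilde U|^q=0$, which one checks by a one-line computation using $(N-1)q-N>0$ for $q>2$, $N\ge2$).

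Next, I would solve the initial value problem for the ODE with $f(0)=\gamma>0$, $f'(0)=0$ — the natural conditions ensuring smoothness of $U_{\tilde C}$ across $x=0$ — and show by a shooting/monotonicity argument that for a suitable choice of $\gamma$ (in fact for every $\gamma\in(0,\tilde C)$ after rescaling, but one value suffices) the solution $f$ is globally defined, strictly increasing, and satisfies the asymptotics $\eta^{-|a|}f(\eta)\to\tilde C$ as $\eta\to\infty$. The increasing character follows from analyzing the sign of $f'$ via the ODE (if $f'$ vanished at some $\eta_1>0$ one reads off the sign of $f''(\eta_1)$ from the remaining terms and derives a contradiction), and the exponential-type control together with comparison with the particular solution $\tilde C\eta^{|a|}$ pins down the limit; the large-$\eta$ expansion $f(\eta)=\tilde C\eta^{|a|}(1+o(1))$ then guarantees $U_{\tilde C}(x,t)\to\tilde C|x|^{|a|}=\tilde U(x)$ pointwise and in $L^1_{loc}$ as $t\to0^+$, since $t^{|a|/2}f(|x|/\sqrt t)=\tilde C|x|^{|a|}(1+o(1))$ uniformly on compact sets in $x$.

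Then I would check that $U_{\tilde C}$ is a weak solution in $Q_{\mathbb R^N,\infty}$ in the distributional sense of Section \ref{sec2}: it is $C^{2,1}$ for $t>0$ (classical smoothness of the ODE solution away from $\eta=0$, plus the matching $f'(0)=0$ giving $C^1$ regularity at the origin and, via the radial Laplacian, enough regularity to satisfy (\ref{un})), it is nonnegative, and $|\nabla U_{\tilde C}|^q\in L^1_{loc}$, so testing against $\varphi\in C_c^\infty$ and integrating by parts is justified; the trace computation above shows $U_{\tilde C}(\cdot,t)\to\tilde U$ weakly-$*$, hence $U_{\tilde C}$ has initial trace $\tilde U$. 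Since $\tilde U\in C(\mathbb R^N)$ is itself trivially a (stationary, classical) weak solution of (\ref{un}) with the same trace, and since $U_{\tilde C}(x,t)>\tilde U(x)$ is impossible to reconcile with $U_{\tilde C}\equiv\tilde U$ — indeed $U_{\tilde C}(0,t)=t^{|a|/2}f(0)>0=\tilde U(0)$ for $t>0$ — the two solutions are distinct, proving nonuniqueness.

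The main obstacle is the ODE analysis in the second step: establishing global existence of $f$ on all of $[0,\infty)$ (ruling out blow-up in finite $\eta$), the strict monotonicity, and especially the precise asymptotic constant $\tilde C$ in $\eta^{-|a|}f(\eta)\to\tilde C$. This requires a careful phase-plane or energy argument for a non-autonomous ODE with a nonlinear gradient term of order $|f'|^q$; the key technical point will be to show that the trajectory starting from $(\gamma,0)$ is trapped below the particular solution $\tilde C\eta^{|a|}$ and asymptotes to it, rather than escaping to $+\infty$ faster or crossing it. I expect this to follow from a comparison principle for the ODE combined with the fact that $q>2$ makes the gradient term subcritical relative to the power growth, but making the limit exactly $\tilde C$ (not merely $\le\tilde C$) is the delicate part. \hfill\rule{0.5em}{0.5em}
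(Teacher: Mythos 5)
Your overall strategy is different from the paper's, and the difference matters: you propose to \emph{construct} $f$ directly by solving the ODE with a shooting argument from $(f(0),f'(0))=(\gamma,0)$, whereas the paper never solves the ODE to build the solution. The paper first produces $U_{\tilde C}$ at the PDE level as the monotone limit of the (unique, classical) solutions $U_{n,\tilde C}$ with truncated initial data $\min(\tilde C|x|^{|a|},n)$; self-similarity is then obtained by comparing $U_{n,\tilde C,\lambda}(x,t)=\lambda^{-a}U_{n,\tilde C}(\lambda x,\lambda^{2}t)$ with $U_{k,\tilde C}$ for suitable $k$ and passing to the limit, which forces $U_{\tilde C}(x,t)=\lambda^{-a}U_{\tilde C}(\lambda x,\lambda^{2}t)$ for all $\lambda>0$. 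Only afterwards is the ODE (\ref{tra}) read off, and the properties $f(0)>0$, $f'>0$ are deduced \emph{a posteriori} (at a first interior zero $\eta_{0}$ of $f'$ the ODE forces $f''(\eta_{0})=\tfrac{|a|}{2}f(\eta_{0})>0$, which is incompatible with $f'$ decreasing to $0$ there). The distinction between the two solutions in the paper is regularity at the origin: $U_{\tilde C}\in C^{2,1}$ while $\tilde U=\tilde C|x|^{|a|}$ with $|a|=(q-2)/(q-1)\in(0,1)$ is not $C^{1}$ at $x=0$; your observation $U_{\tilde C}(0,t)=t^{|a|/2}f(0)>0=\tilde U(0)$ is an equally valid way to separate them once $f(0)>0$ is known.

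The genuine gap in your route is exactly the step you flag as ``the main obstacle'': global existence of the shooting solution on $[0,\infty)$ and, above all, the identification of the asymptotic constant, $\eta^{-|a|}f(\eta)\to\tilde C$. This is not a routine comparison. The paper's argument shows that for \emph{every} $C>0$ there is a self-similar solution with trace $C|x|^{|a|}$, so the ODE (\ref{tra}) carries a one-parameter family of admissible profiles, one for each asymptotic constant; correspondingly, different values of $\gamma=f(0)$ produce different limits $\lim\eta^{-|a|}f(\eta)$, and your parenthetical ``for every $\gamma\in(0,\tilde C)$ after rescaling, but one value suffices'' cannot be right as stated — hitting the constant $\tilde C$ requires selecting the correct $\gamma$, i.e.\ a full shooting argument with continuity and monotonicity of the asymptotic constant in $\gamma$, plus exclusion of trajectories that blow up in finite $\eta$ or grow at a different rate. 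None of this is supplied. A second, smaller sign of incompleteness is the ODE itself: your schematic equation carries a spurious factor $\eta^{q-?}$ on the gradient term, whereas the self-similar exponents $|a|/2$ and $1/2$ are precisely tuned so that no power of $\eta$ survives and the equation is exactly (\ref{tra}); and the correct normalization of $f$ at infinity is $\eta^{-|a|}f(\eta)\to\tilde C$ (needed for the trace computation), not $\eta^{-|a|/2}$. As written, the proposal establishes that $\tilde U$ is a weak stationary solution and correctly reduces the problem, but the existence of the second solution — the substance of the theorem — rests on an unproved ODE analysis that the paper's approximation-plus-scaling argument is specifically designed to avoid.
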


Finally we give in Section \ref{sec6} a second type of regularizing effects
giving a local version of (\ref{efb}).

\begin{theorem}
\label{effects}Let $q>1,$ and let $u$ be any nonnegative classical solution
(resp. any weak solution if $q\leqq2)$ of equation (\ref{un}) in $Q_{\Omega
,T}$, and let $B(x_{0},2\eta)\subset\Omega$. Assume that $u_{0}\in L_{loc}%
^{R}(\Omega)$ for some $R\geqq1,$ $R>q-1,$ and $u\in C(\left[  0,T\right)
;L_{loc}^{R}(\Omega)).$ Then for any $\varepsilon>0,$ and for any $\tau
\in\left(  0,T\right)  ,$ then there exists $C=C(N,q,R,\eta,\varepsilon,\tau)$
such that
\begin{equation}
\text{sup}_{B_{\eta/2}}u(.,t)\leqq Ct^{-\frac{N}{qR+N(q-1)}}(t+\left\Vert
u_{0}\right\Vert _{L^{R}(B_{\eta})})^{\frac{Rq}{qR+N(q-1)}}+Ct^{\frac
{1-\varepsilon}{R+1-q}}\left\Vert u_{0}\right\Vert _{L^{R}(B_{\eta})}%
^{\frac{R}{R+1-q}}. \label{epsi}%
\end{equation}
If $q<2,$ the estimates for $R=1$ are also valid when $u$ has a trace
$u_{0}\in\mathcal{M}^{+}(\Omega),$ with $\left\Vert u_{0}\right\Vert
_{L^{1}(B_{\eta})}$ replaced by $\int_{B_{\eta}}du_{0}.\medskip$
\end{theorem}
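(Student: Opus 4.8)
The plan is to combine the first regularizing effect of Theorem \ref{local} (the $L^R$-$L^\infty$ smoothing of the heat part) with the gradient-absorption effect, exploiting the universal estimate (\ref{versa}) and the growth control (\ref{esta}). First I would observe that by Theorem \ref{local}, applied on a slightly smaller ball, we already control $\sup u(\cdot,t)$ in terms of $t^{-N/2R}(t+\|u_0\|_{L^R(B_\eta)})$; this is the "heat" piece but with the wrong power of $t$, so it is not yet (\ref{epsi}). The point of the second effect is to interpolate this against the dissipation produced by the $|\nabla u|^q$ term. The natural device is to fix $s\in(0,t)$, use the semigroup/Duhamel structure on $(s,t)$ writing $u(t)$ as the heat evolution of $u(s)$ minus the accumulated gradient absorption, and then estimate $u(s)$ itself by the crude bound from Theorem \ref{local} at time $s$; optimizing over the splitting point $s$ (roughly $s\sim t^{\theta}$ for a suitable $\theta$) should convert the exponent $-N/2R$ into $-N/(qR+N(q-1))$, exactly as in the global statement (\ref{efb}) of \cite{BiDao2}, which I would follow closely as a template.

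The key steps, in order, would be: (1) localize — introduce a cutoff $\xi$ supported in $B_\eta$, equal to $1$ on $B_{\eta/2}$, and derive the equation satisfied by $v=\xi u$ (or by $\xi u^{(q-1)/q}$, to make the gradient term homogeneous, as suggested by (\ref{car})); (2) invoke Theorem \ref{local} to get the preliminary bound $\sup_{B_{3\eta/4}}u(\cdot,s)\le C s^{-N/2R}(s+\|u_0\|_{L^R(B_\eta)})$ for all small $s$; (3) use the gradient bound — either the local Bernstein estimate (\ref{soup}) of \cite{SoZh}, valid for classical solutions, or the universal estimate (\ref{versa}) transported to the local setting via Theorem \ref{growth} — to control $\|\nabla u(\cdot,s)\|$ on the smaller ball by a multiple of $s^{-1/q}\sup u(\cdot,s)^{1/q}$ plus lower-order $\eta$-terms; (4) run the Duhamel formula for the localized problem on the time interval $(s,t)$, treating $|\nabla u|^q$ as a forcing term bounded using step (3), and estimate the heat kernel convolutions by the $L^R$-$L^\infty$ kernel bound $t^{-N/2R}$; (5) choose $s$ as an optimal power of $t$ to balance the two resulting terms, which produces the first term of (\ref{epsi}); (6) account for the residual contribution of the localization error and of the crude time interval near $0$, which is where the "lossy" term $C t^{(1-\varepsilon)/(R+1-q)}\|u_0\|_{L^R(B_\eta)}^{R/(R+1-q)}$ with the arbitrary $\varepsilon>0$ comes from — the $\varepsilon$ absorbs a logarithmic or borderline loss incurred when $R$ is close to $q-1$, i.e. when $R+1-q$ is small.

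The hardest part, I expect, will be step (4)–(5): making the Duhamel/interpolation argument genuinely \emph{local}, because the cutoff generates commutator terms ($\Delta\xi$, $\nabla\xi\cdot\nabla u$) that must themselves be absorbed, and these are precisely what force the extra $\eta$-dependence and the second, non-sharp term with the free parameter $\varepsilon$. Controlling $\nabla u$ in step (3) is delicate as well: for $q\le 2$ one only has weak solutions, so (\ref{soup}) is not directly available and one must instead first apply Theorems \ref{fund}/\ref{growth} (whose local hypothesis on the trace is met since $u_0\in L^R_{loc}\subset\mathcal M^+$) to bootstrap enough regularity and the bound $|\nabla u|^q\le \tfrac{1}{q-1}u/t$, then feed that into the forcing estimate. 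The condition $R>q-1$ is exactly what keeps the exponent $R/(R+1-q)$ finite and the iteration convergent, so it should enter visibly at the optimization step; and the final reduction to $R=1$ with a measure initial datum when $q<2$ follows by the same approximation scheme, using that Theorem \ref{local} already covers that case.
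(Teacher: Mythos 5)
Your plan has a genuine gap at its core: the mechanism you propose in steps (3)--(5) cannot produce the improved exponent $-N/(qR+N(q-1))$. In a Duhamel representation the term $|\nabla u|^{q}$ enters with a favorable (negative) sign, so treating it as a forcing and bounding it \emph{from above} --- whether by (\ref{soup}) or by the universal bound $|\nabla u|^{q}\leqq u/((q-1)t)$ --- only lets you discard it or replace it by something harmless, and what remains is the pure heat evolution, i.e.\ the rate $t^{-N/2R}$ of the first effect. The second regularizing effect is not a heat-semigroup effect at all (note that (\ref{efb}) is independent of $\nu$): to gain the exponent $\frac{Rq}{qR+N(q-1)}$ one must exploit the \emph{dissipation} $\int u^{R-1}|\nabla u|^{q}$ quantitatively from below, which is what the paper does by feeding the energy inequality (\ref{ruc}) into a Gagliardo--Nirenberg embedding for $w=u^{\frac{q+R-1}{q}}\xi^{\lambda/q}$ (Lemma \ref{xit}) and then running a De Giorgi/Stampacchia iteration on the truncations $(u-k_{n})^{+}$ over nested cylinders (Theorem \ref{locest}). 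There is a second, independent obstruction to your step (3): estimate (\ref{versa}) and Theorem \ref{growth} are global statements in $Q_{\mathbb{R}^{N},T}$ (the paper remarks that (\ref{versa}) is false for the Dirichlet problem, by the Hopf lemma), whereas Theorem \ref{effects} is local in an arbitrary $\Omega$, so they are not available here.

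Two further points of divergence from the actual argument. The second term $Ct^{\frac{1-\varepsilon}{R+1-q}}\|u_{0}\|_{L^{R}(B_{\eta})}^{\frac{R}{R+1-q}}$ is not a borderline or logarithmic loss from localization: it is one of the three terms of the iteration estimate (\ref{gr2}), and the free parameter $\varepsilon$ arises from choosing an anisotropic time scale $\rho^{\kappa}=t/(k+1)$ with $\kappa$ large when combining (\ref{gr2}) with the local integral bound (\ref{locint}) of Lemma \ref{cor}. Likewise $R>q-1$ does not enter at an ``optimization step'': it is needed so that, in the case $R<q$, the interpolation $\sup u\leqq\frac12\sup u+\dots$ closes (the exponent $q-R$ on $M_{n+1}$ must be $<1$). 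The final reduction to measures for $R=1$, $q<2$ is the one part of your plan that matches the paper.
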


In conclusion, note that a part of our results could be extended to more
general quasilinear operators, for example to the case of equation involving
the $p$-Laplace operator
\[
u_{t}-\nu\Delta_{p}u+|\nabla u|^{q}=0
\]
with $p>1,$ following the results of \cite{BiDao2}, \cite{BarLa}, \cite{IaLa},
\cite{FoSoVe}.

\section{Classical and weak solutions\label{sec2}}

We set $Q_{\Omega,s,\tau}=\Omega\times\left(  s,\tau\right)  ,$ for any
$0\leqq s<\tau\leqq\infty,$ thus $Q_{\Omega,T}=Q_{\Omega,0,T}.\medskip$

\begin{definition}
\label{defw}Let $q>1$ and $\Omega$ be any domain of $\mathbb{R}^{N}.$ We say
that a nonnegative function $u$ is a \textbf{classical} solution of (\ref{un})
in $Q_{\Omega,T}$ if $u\in C^{2,1}(Q_{\Omega,T})$. We say that $u$ is a
\textbf{weak solution} (resp. weak subsolution) of (\ref{un}) in $Q_{\Omega
,T},$ if $u\in C((0,T);L_{loc}^{1}(Q_{\Omega,T}))\cap L_{loc}^{1}%
((0,T);W_{loc}^{1,1}\left(  \Omega\right)  ),$ $|\nabla u|^{q}\in L_{loc}%
^{1}(Q_{\Omega,T})$ and $u$ satisfies (\ref{un}) in the distribution sense:
\begin{equation}
\int_{0}^{T}\int_{\Omega}(-u\varphi_{t}-\nu u\Delta\varphi+|\nabla
u|^{q}\varphi)=0,\quad\forall\varphi\in\mathcal{D}(Q_{\Omega,T}), \label{for}%
\end{equation}
(resp.
\begin{equation}
\int_{0}^{T}\int_{\Omega}(-u\varphi_{t}-\nu u\Delta\varphi+|\nabla
u|^{q}\varphi)\leqq0,\quad\forall\varphi\in\mathcal{D}^{+}(Q_{\Omega
,T}).\text{)} \label{fors}%
\end{equation}
And then for any $0<s<t<T,$ and any $\varphi\in C^{1}((0,T),C_{c}^{1}%
(\Omega)),$
\begin{equation}
\int_{\Omega}(u\varphi)(.,t)-\int_{\Omega}(u\varphi)(.,s)+\int_{s}^{t}%
\int_{\Omega}(-u\varphi_{t}+\nu\nabla u.\nabla\varphi+|\nabla u|^{q}%
\varphi)=0\text{ (resp.}\leqq0). \label{fort}%
\end{equation}

\end{definition}

\begin{remark}
\label{subreg} Any weak subsolution $u$ is locally bounded in $Q_{\Omega,T}$.
Indeed, since $u$ is $\nu$-subcaloric, there holds for any ball $B(x_{0}%
,\rho)\subset\subset\Omega$ and any $\rho^{2}\leqq t<T,$
\begin{equation}
\sup_{B(x_{0},\frac{\rho}{2})\times\left[  t-\frac{\rho^{2}}{4},t\right]
}u\leqq C(N,\nu)\rho^{-(N+2)}\int_{t-\frac{\rho^{2}}{2}}^{t}\int_{B(x_{0}%
,\rho)}u. \label{nusc}%
\end{equation}
Any nonnegative\textbf{ }function $u\in L_{loc}^{1}(Q_{\Omega,T}),$ such that
$|\nabla u|^{q}\in L_{loc}^{1}(Q_{\Omega,T}),$ and $u$ satisfies (\ref{for}),
is a weak solution and $\left\vert \nabla u\right\vert \in L_{loc}%
^{2}(Q_{\Omega,T})),u\in C((0,T);L_{loc}^{s}(Q_{\Omega,T})),\forall s\geqq1,$
see \cite[Lemma 2.4]{BiDao1}.$\medskip$
\end{remark}

Next we recall the regularity of the weak solutions of (\ref{un}) for
$q\leqq2,$ see \cite[Theorem 2.9]{BiDao1}, \cite[Corollary 5.14]{BiDao2}:

\begin{theorem}
\label{gul} Let $1<q\leqq2$. Let $\Omega$ be any domain in $\mathbb{R}^{N}$.
Let $u$ be any weak nonnegative solution of (\ref{un}) in $Q_{\Omega,T}$. Then
$u\in C_{loc}^{2+\gamma,1+\gamma/2}(Q_{\Omega,T})$ for some $\gamma\in\left(
0,1\right)  ,$ and for any smooth domains $\omega\subset\subset\omega^{\prime
}\subset\subset\Omega,$ and $0<s<\tau<T,$ $\left\Vert u\right\Vert
_{C^{2+\gamma,1+\gamma/2}(Q_{\omega,s,\tau})}$ is bounded in terms of
$\left\Vert u\right\Vert _{L^{\infty}(Q_{\omega^{\prime},s/2,\tau})}.$ Thus
for any sequence $(u_{n})$ of nonnegative weak solutions of equation
(\ref{un}) in $Q_{\Omega,T},$ uniformly locally bounded, one can extract a
subsequence converging in $C_{loc}^{2,1}(Q_{\Omega,T})$ to a weak solution $u$
of (\ref{un}) in $Q_{\Omega,T}.\medskip$
\end{theorem}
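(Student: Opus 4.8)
The plan is to establish the regularity statement in two stages: first the interior Hölder estimates $u\in C^{2+\gamma,1+\gamma/2}_{loc}$ with the claimed quantitative control, and then deduce the compactness assertion by a standard diagonal argument. Since the cited results are \cite[Theorem 2.9]{BiDao1} and \cite[Corollary 5.14]{BiDao2}, I would really just reconstruct the bootstrap. The starting point is Remark \ref{subreg}: any weak solution is locally bounded, so on a slightly smaller cylinder $Q_{\omega',s/2,\tau}$ we have an $L^\infty$ bound $M$ on $u$. The key structural fact for $1<q\le 2$ is that the absorption term $|\nabla u|^q$ has subquadratic growth in the gradient, so the equation $u_t-\nu\Delta u = -|\nabla u|^q =: g$ fits the framework of quasilinear parabolic equations with natural growth.

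The first real step is a gradient bound. I would localize: multiply the equation by suitable test functions built from a cutoff $\xi$ supported in $Q_{\omega',s/2,\tau}$ and from powers of $u$ (or use the Bernstein-type device of differentiating, but the cleanest is an $L^\infty$ gradient estimate via De Giorgi–Nash–Moser applied to $v=|\nabla u|^2$, or simply invoke Lieberman-type interior gradient estimates for equations with growth $|\nabla u|^q$, $q\le 2$). The conclusion is $\|\nabla u\|_{L^\infty(Q_{\omega'',s/4,\tau})}\le C(N,q,\nu,\omega'',\omega',s,M)$. Once the gradient is bounded, say by $K$, the right-hand side $g=-|\nabla u|^q$ is a bounded function, $\|g\|_\infty\le K^q$, so $u$ solves a linear heat equation with bounded right-hand side; parabolic $L^p$ theory gives $u\in W^{2,1}_p$ locally for all $p$, hence $u\in C^{1+\alpha,(1+\alpha)/2}_{loc}$ by Sobolev embedding. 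Then $g=-|\nabla u|^q$ is itself Hölder continuous (composition of the Hölder function $\nabla u$ with $|\cdot|^q$, which is locally Lipschitz away from $0$ and Hölder of exponent $\min(1,q\alpha/\alpha)$—in fact since $q\le 2$ and $|\cdot|^q$ is $C^1$ for $q>1$... one must be slightly careful near $\nabla u=0$, but $t\mapsto|t|^q$ is $C^{0,\min(1,q-1)}$ composed with Lipschitz, giving some Hölder exponent), and Schauder estimates upgrade $u$ to $C^{2+\gamma,1+\gamma/2}_{loc}$, with the norm controlled by $M=\|u\|_{L^\infty(Q_{\omega',s/2,\tau})}$ through the chain of constants.

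The second step is the compactness statement. Given a sequence $(u_n)$ of nonnegative weak solutions, uniformly locally bounded, the uniform local $L^\infty$ bound feeds into the quantitative interior estimate just proved, giving uniform local $C^{2+\gamma,1+\gamma/2}$ bounds on every compactly contained subcylinder. By Arzelà–Ascoli (applied to $u_n$, $\nabla u_n$, $D^2 u_n$, $\partial_t u_n$) and a diagonal extraction over an exhaustion of $Q_{\Omega,T}$ by such subcylinders, a subsequence converges in $C^{2,1}_{loc}(Q_{\Omega,T})$ to some $u$. Passing to the limit in the pointwise equation (all terms converge locally uniformly) shows $u$ solves \eqref{un} classically, hence is a weak solution; nonnegativity passes to the limit trivially.

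The main obstacle is the interior gradient estimate with the correct quantitative dependence — everything after it (the $L^p$–Schauder bootstrap and the Ascoli argument) is routine. The delicate point there is that the nonlinearity $|\nabla u|^q$ with $q\le 2$ is exactly at or below the natural (quadratic) growth threshold, so one genuinely needs either a Bernstein computation controlling $|\nabla u|^2$ through a parabolic inequality of the form $(\partial_t-\nu\Delta)|\nabla u|^2 \le C|\nabla u|^{2}$-type terms after absorbing the bad gradient powers using $q\le 2$ and Young's inequality, together with the local $L^\infty$ bound on $u$ to control zero-order contributions; or a reference to Lieberman's interior gradient estimates. A secondary subtlety is the loss of smoothness of $s\mapsto s^q$ at $s=0$ when $q<2$: it is $C^1$ but only $C^{1,q-1}$, which is why the final Hölder exponent $\gamma$ may be strictly less than $1$ and cannot be taken arbitrarily close to $1$ — this is why the statement only claims "for some $\gamma\in(0,1)$". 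I would handle it by noting $g=-|\nabla u|^q$ is $C^{0,\beta}$ with $\beta=\alpha\min(1,q-1)$ once $\nabla u\in C^{0,\alpha}$, which suffices for Schauder.
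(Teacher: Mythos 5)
The paper does not prove this statement: it is recalled from \cite[Theorem 2.9]{BiDao1} and \cite[Corollary 5.14]{BiDao2}, so there is no in-text proof to compare against. Your reconstruction follows the standard bootstrap that those references use, and it is essentially correct: local boundedness from subcaloricity, an interior gradient bound exploiting that $|\nabla u|^{q}$ has natural (at most quadratic) growth when $q\leqq2$, then $W^{2,1}_{p}$ theory, Schauder, and Arzel\`a--Ascoli with a diagonal extraction. Two small remarks. First, before invoking the Ladyzhenskaya--Solonnikov--Uraltseva/DiBenedetto/Lieberman gradient estimates you must place the weak solution in the energy class; the paper's Remark \ref{subreg} supplies exactly this ($\left\vert \nabla u\right\vert \in L^{2}_{loc}$, $u\in C((0,T);L^{s}_{loc})$ for all $s$), and a direct De Giorgi or Bernstein computation on $\left\vert \nabla u\right\vert ^{2}$ would be circular without a prior approximation step, so the citation-level route is the right one. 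Second, your worry about the H\"older exponent of $g=-\left\vert \nabla u\right\vert ^{q}$ near $\nabla u=0$ is unnecessary: for $q>1$ the map $p\mapsto\left\vert p\right\vert ^{q}$ is locally Lipschitz (its gradient $q\left\vert p\right\vert ^{q-2}p$ is bounded on bounded sets), so the composition keeps the full exponent $\alpha$; the reason $\gamma$ cannot be pushed to $1$ is the unspecified exponent coming out of the gradient H\"older estimate, not the composition. These do not affect the validity of the argument, and the restriction $q\leqq2$ enters exactly where you say it does.
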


\begin{remark}
\label{gil}Let $q>1.$ From the estimates (\ref{soup}), for any sequence of
classical nonnegative solutions $\left(  u_{n}\right)  $ of (\ref{un}) in
$Q_{\Omega,T},$ uniformly bounded in $L_{loc}^{\infty}(Q_{\Omega,T}),$ one can
extract a subsequence converging in $C_{loc}^{2,1}(Q_{\mathbb{R}^{N},T})$ to a
classical solution $u$ of (\ref{un}).$\medskip$
\end{remark}

\begin{remark}
\label{trac}Let us mention some results of concerning the trace, valid for any
$q>1$, see \cite[Lemma 2.14]{BiDao1}. Let $u$ be any nonnegative weak solution
$u$ of (\ref{un}) in $Q_{\Omega,T}$. Then $u$ has a trace $u_{0}$ in
$\mathcal{M}^{+}(\Omega)$ if and only if $u\in L_{loc}^{\infty}{(}\left[
0,T\right)  {;L_{loc}^{1}(}\Omega)),$ and if and only if $\left\vert \nabla
u\right\vert ^{q}\in L_{loc}^{1}(\Omega\times\left[  0,T\right)  ).$\ And then
for any $t\in(0,T),$ and any $\varphi\in C_{c}^{1}(\Omega\times\left[
0,T\right)  )$, and any $\zeta\in C_{c}^{1}(\Omega),$
\begin{equation}
\int_{\Omega}u(.,t)\varphi dx+\int_{0}^{t}\int_{\Omega}(-u\varphi_{t}%
+\nu\nabla u.\nabla\varphi+\left\vert \nabla u\right\vert ^{q}\varphi
)=\int_{\Omega}\varphi(.,0)du_{0}, \label{hou}%
\end{equation}%
\begin{equation}
\int_{\Omega}u(.,t)\zeta+\int_{0}^{t}\int_{\Omega}(\nu\nabla u.\nabla
\zeta+\left\vert \nabla u\right\vert ^{q}\zeta)=\int_{\Omega}\zeta du_{0}.
\label{bou}%
\end{equation}
If $u_{0}\in{L_{loc}^{1}(}\Omega),$ then $u\in C{(}\left[  0,T\right)
{;L_{loc}^{1}(}\Omega)).$
\end{remark}

Finally we consider the Dirichlet problem in a smooth bounded domain $\Omega
$:
\begin{equation}
(D_{\Omega,T})\left\{
\begin{array}
[c]{l}%
u_{t}-\Delta u+|\nabla u|^{q}=0,\quad\text{in}\hspace{0.05in}Q_{\Omega,T},\\
u=0,\quad\text{on}\hspace{0.05in}\partial\Omega\times(0,T).
\end{array}
\right.  \label{diri}%
\end{equation}

\begin{definition}
We say that a function $u$ is a \textbf{weak solution of }$(D_{\Omega,T})$ if
it is a weak solution of equation (\ref{un}) such that $u\in C((0,T);L^{1}%
\left(  \Omega\right)  )\cap L_{loc}^{1}((0,T);W_{0}^{1,1}\left(
\Omega\right)  ),$ and $|\nabla u|^{q}\in L_{loc}^{1}((0,T);L^{1}\left(
\Omega\right)  ).$ We say that $u$ is a \textbf{classical} solution of
$(D_{\Omega,T})$ if $u\in C^{2,1}(Q_{\Omega,T})\cap C^{1,0}\left(
\overline{\Omega}\times\left(  0,T\right)  \right)  .$
\end{definition}

\section{Local integral properties and first regularizing effect\label{sec3}}

\subsection{Local integral properties}

\begin{lemma}
\label{int}Let $\Omega$ be any domain in $\mathbb{R}^{N}$, $q>1,R\geqq1.$ Let
$u$ be any nonnegative weak subsolution of equation (\ref{un}) in
$Q_{\Omega,T},$ such that $u\in C{((}0,T){;L_{loc}^{R}(}\Omega)).$ Let $\xi
\in$ $C^{1}((0,T);C_{c}^{1}(\Omega)),$ with values in $\left[  0,1\right]  .$
Let $\lambda>1.$ Then there exists $C=C(q,R,\lambda),$ such that, for any
$0<s<t\leqq\tau<T,$%
\begin{align}
&
{\textstyle\int_{\Omega}}
u^{R}(.,t)\xi^{\lambda}+\frac{1}{2}\int_{s}^{\tau}\int_{\Omega}u^{R-1}|\nabla
u|^{q}\xi^{\lambda}+\nu\frac{R-1}{2}\int_{s}^{\tau}\int_{\Omega}u^{R-2}|\nabla
u|^{2}\xi^{\lambda}\nonumber\\
&  \leqq%
{\textstyle\int_{\Omega}}
u^{R}(.,s)\xi^{\lambda}+\lambda R\int_{s}^{t}\int_{\Omega}u^{R}\xi^{\lambda
-1}\left\vert \xi_{t}\right\vert +C\int_{s}^{t}\int_{\Omega}u^{R-1}%
\xi^{\lambda-q^{\prime}}|\nabla\xi|^{q^{\prime}}. \label{ruc}%
\end{align}

\end{lemma}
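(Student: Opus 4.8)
The plan is to multiply the equation by $u^{R-1}\xi^{\lambda}$ and integrate over $Q_{\Omega,s,\tau}$, treating the three terms of the equation separately. Since $u$ is only a weak subsolution, I would not manipulate the pointwise equation directly; instead I would use the weak formulation \eqref{fort} with test function $\varphi = u^{R-1}\xi^{\lambda}$, which is admissible after a standard truncation/regularization argument (replace $u^{R-1}$ by $(\min(u,k))^{R-1}$ or a smoothed version, derive the inequality, then let the truncation parameter tend to infinity using $u\in C((0,T);L^R_{loc})$ and the sign of each term). The time-derivative term produces $\frac{1}{R}\frac{d}{dt}\int_\Omega u^R\xi^\lambda$ up to the term $-\int u^R \partial_t(\xi^\lambda)$, which after integration in time and bounding $|\partial_t(\xi^\lambda)|\le \lambda \xi^{\lambda-1}|\xi_t|$ gives the first two terms on the right of \eqref{ruc} (boundary-at-$s$ term plus the $\lambda R \int\int u^R \xi^{\lambda-1}|\xi_t|$ term).

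Next I would handle the diffusion term. Integrating $-\nu\int\int \Delta u\, u^{R-1}\xi^\lambda$ by parts in space gives
$\nu(R-1)\int\int u^{R-2}|\nabla u|^2\xi^\lambda + \nu\lambda\int\int u^{R-1}\xi^{\lambda-1}\nabla u\cdot\nabla\xi$.
The first of these is exactly the good term with the right sign appearing on the left of \eqref{ruc} (with a factor to spare). The absorption term contributes $+\int\int u^{R-1}|\nabla u|^q\xi^\lambda$, the other good term on the left. So the whole estimate reduces to absorbing the single bad cross term $\nu\lambda\int\int u^{R-1}\xi^{\lambda-1}\nabla u\cdot\nabla\xi$ into the two good terms, keeping half of each.

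The key step — and the main obstacle — is precisely this absorption, and it is where the exponent $\lambda>1$ and the weights $\xi^{\lambda-q'}|\nabla\xi|^{q'}$ enter. I would write $u^{R-1}|\nabla u|\,\xi^{\lambda-1}|\nabla\xi| = \big(u^{(R-1)/q}|\nabla u|\,\xi^{(\lambda-1)/q}\big)\cdot\big(u^{(R-1)/q'}\xi^{\lambda-1-(\lambda-1)/q}|\nabla\xi|\big)$ and apply Young's inequality with exponents $q,q'$: the first factor raised to the $q$ gives $u^{R-1}|\nabla u|^q\xi^{\lambda-1}$, which (since $0\le\xi\le1$, so $\xi^{\lambda-1}\le\xi^{\lambda}$... actually one wants $\xi^{\lambda-1}\ge\xi^\lambda$, so instead distribute the weight as $\xi^{\lambda/q}$ on the gradient factor and $\xi^{\lambda/q'-1}$ on the other) is controlled by the good absorption term; the second factor raised to the $q'$ gives a multiple of $u^{R-1}\xi^{\lambda-q'}|\nabla\xi|^{q'}$, the claimed remainder term — this requires $\lambda\ge q'$ in the relevant region or the genericity of $C(q,R,\lambda)$ to cover the constant. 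When $R>1$ one may instead split the cross term partly against the $u^{R-2}|\nabla u|^2$ term; since $q$ may be less than $2$, the cleanest uniform choice is to always absorb into the $u^{R-1}|\nabla u|^q$ term, which is why that term carries the factor $\frac12$ and why no lower bound on $q$ relative to $2$ is needed. Finally, choosing the Young parameter small enough to leave $\frac12$ of each good term, and discarding the (nonnegative) $u^{R-2}|\nabla u|^2$ contribution where not needed, yields \eqref{ruc} with $C=C(q,R,\lambda)$; the continuity $u\in C((0,T);L^R_{loc})$ justifies evaluating the $t$ and $s$ slices and passing to the limit in the truncation.
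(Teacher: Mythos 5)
Your proposal is correct and follows essentially the same route as the paper: test (formally) with $u^{R-1}\xi^{\lambda}$, integrate by parts in space and time, and absorb the single cross term $\nu\lambda\int\!\!\int u^{R-1}\xi^{\lambda-1}\nabla u\cdot\nabla\xi$ via Young's inequality with exponents $q,q'$ into the $u^{R-1}|\nabla u|^{q}\xi^{\lambda}$ term, which produces exactly the remainder $u^{R-1}\xi^{\lambda-q'}|\nabla\xi|^{q'}$. The only (inessential) difference is the justification of the nonlinear test function: the paper mollifies $u+\delta$ and uses convexity of $|\cdot|^{q}$ to keep a pointwise subsolution, whereas you truncate/smooth the test function, both being standard.
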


\begin{proof}
(i) Let $R=1.$ Taking $\varphi=\xi^{\lambda}$ in (\ref{fort}), we obtain,
since $\nu\leqq1,$
\begin{align*}
&  \int_{\Omega}u(.,t)\xi^{\lambda}+\int_{s}^{t}\int_{\Omega}|\nabla u|^{q}%
\xi^{\lambda}\leqq\int_{\Omega}u(s,.)\xi^{\lambda}+\lambda\int_{s}^{t}%
\int_{\Omega}\xi^{\lambda-1}u\xi_{t}+\lambda\nu\int_{s}^{t}\int_{\Omega}%
\xi^{\lambda-1}\nabla u.\nabla\xi\\
&  \leqq\int_{\Omega}u(.,s)\xi^{\lambda}+\lambda\int_{s}^{t}\int_{\Omega}%
\xi^{\lambda-1}u\left\vert \xi_{t}\right\vert +\frac{1}{2}\int_{s}^{t}%
\int_{\Omega}|\nabla u|^{q}\xi^{q^{\prime}}+C(q,\lambda)\int_{s}^{t}%
\int_{\Omega}\xi^{\lambda-q^{\prime}}|\nabla\xi|^{q^{\prime}},
\end{align*}
hence (\ref{ruc}) follows.

(ii) Next assume $R>1.$ Consider $u_{\delta,n}=((u+\delta)\ast\varphi_{n})$,
where ($\varphi_{n}$) is a sequence of mollifiers, and $\delta>0.$ Then by
convexity, $u_{\delta,n}$ is also a subsolution of (\ref{un}):%
\[
(u_{\delta,n})_{t}-\nu\Delta u_{\delta,n}+|\nabla u_{\delta,n}|^{q}\leqq0.
\]
Multiplying by $u_{\delta,n}^{R-1}\xi^{\lambda}$ and integrating between $s$
and $t,$ and going to the limit as $\delta\rightarrow0$ and $n\rightarrow
\infty,$ see \cite{BiDao2}, we get with different constants $C=(N,q,R,\lambda
),$ independent of $\nu,$
\begin{align*}
&  \frac{1}{R}\int_{\Omega}u^{R}(.,t)\xi^{\lambda}+\nu(R-1)\int_{s}^{t}%
\int_{\Omega}u^{R-2}|\nabla u|^{2}\xi^{\lambda}+\int_{s}^{t}\int_{\Omega
}u^{R-1}|\nabla u|^{q}\xi^{\lambda}\\
&  \leqq\frac{1}{R}\int_{\Omega}u^{R}(.,s)\xi^{\lambda}+\lambda\int_{s}%
^{t}\int_{B_{\rho}}\xi^{\lambda-1}u^{R}\left\vert \xi_{t}\right\vert
+\lambda\nu\int_{\theta}^{t}\int_{\Omega}u^{R-1}|\nabla u|\left\vert \nabla
\xi\right\vert \xi^{\lambda-1}\\
&  \leqq\frac{1}{R}\int_{\Omega}u^{R}(.,s)\xi^{\lambda}+\lambda\int_{s}%
^{t}\int_{B_{\rho}}\xi^{\lambda-1}u^{R}\left\vert \xi_{t}\right\vert \\
&  +\frac{1}{2}\int_{s}^{\tau}\int_{\Omega}u^{R-1}|\nabla u|^{q}\xi^{\lambda
}+C(\lambda,R)\int_{s}^{t}\int_{\Omega}u^{R-1}\xi^{\lambda-q^{\prime}}%
|\nabla\xi|^{q^{\prime}},
\end{align*}
and (\ref{ruc}) follows again.\medskip
\end{proof}

Then we give local integral estimates of $u(.,t)$ in terms of the initial data:

\begin{lemma}
\label{cor}Let $q>1.$ Let $\eta>0.$ Let $u$ be any nonnegative weak solution
of equation (\ref{un}) in $Q_{\Omega,T}$, with trace $u_{0}\in\mathcal{M}%
^{+}(\Omega),$ and let $B(x_{0},2\eta)\subset\subset\Omega$. Then for any
$t\in\left(  0,T\right)  $,
\begin{equation}
\int_{B(x_{0},\eta)}u(x,t)\leqq C(N,q)\eta^{N-q^{\prime}}t+\int_{B(x_{0}%
,2\eta)}du_{0}. \label{zof}%
\end{equation}
Moreover if $u_{0}\in L_{loc}^{R}(\Omega)$ $(R>1),$ and $u\in C(\left[
0,T\right)  ;L_{loc}^{R}(\Omega)),$ then
\begin{equation}
\left\Vert u(.,t)\right\Vert _{L^{R}(B(x_{0},\eta))}\leqq C(N,q,R)\eta
^{\frac{N}{R}-q^{\prime}}t+\left\Vert u_{0}\right\Vert _{L^{R}(B(x_{0}%
,2\eta))}. \label{locint}%
\end{equation}
If $u\in C(\overline{B(x_{0},2\eta)}\times\left[  0,T\right)  ),$ then
\begin{equation}
\left\Vert u(.,t)\right\Vert _{L^{\infty}(B(x_{0},\eta))}\leqq C(N,q)\eta
^{-q^{\prime}}t+\left\Vert u_{0}\right\Vert _{L^{\infty}(B(x_{0},2\eta))}.
\label{locinf}%
\end{equation}

\end{lemma}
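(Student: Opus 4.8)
The plan is to derive all three estimates from the integral inequality (\ref{ruc}) of Lemma \ref{int}, applied with a suitable cutoff function $\xi$ depending only on $x$ (time-independent), combined with the passage to the trace. Throughout one takes $\xi$ supported in $B(x_0,2\eta)$, equal to $1$ on $B(x_0,\eta)$, with $|\nabla\xi|\leqq C/\eta$, and chooses $\lambda>q'$ so that the cutoff powers $\xi^{\lambda-q'}$ and $\xi^{\lambda-1}$ are still admissible. Since $\xi$ is time-independent, the term $\lambda R\int\int u^R\xi^{\lambda-1}|\xi_t|$ drops out entirely, which is the key simplification.

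For (\ref{zof}), take $R=1$. Inequality (\ref{ruc}) with $\xi_t=0$ gives, for $0<s<t<T$,
\[
\int_\Omega u(.,t)\xi^{\lambda}\leqq\int_\Omega u(.,s)\xi^{\lambda}+C(q,\lambda)\int_s^t\int_\Omega \xi^{\lambda-q'}|\nabla\xi|^{q'}.
\]
The last integral is bounded by $C(q)\,\eta^{-q'}\,|B(x_0,2\eta)|\,(t-s)\leqq C(N,q)\eta^{N-q'}(t-s)$. Now let $s\rightarrow0$: since $u$ has trace $u_0\in\mathcal{M}^+(\Omega)$, the weak$^{\ast}$ convergence (\ref{mea}) applied to the test function $\xi^{\lambda}(\cdot)\in C_c(\Omega)$ gives $\int_\Omega u(.,s)\xi^{\lambda}\to\int_\Omega\xi^{\lambda}du_0\leqq\int_{B(x_0,2\eta)}du_0$. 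Since $\xi=1$ on $B(x_0,\eta)$, the left side dominates $\int_{B(x_0,\eta)}u(x,t)\,dx$, and (\ref{zof}) follows.

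For (\ref{locint}) with $R>1$: now the trace is assumed in $L^R_{loc}$ and $u\in C([0,T);L^R_{loc}(\Omega))$, so one can take $s=0$ directly in (\ref{ruc}) (the inequality holds up to $s=0$ by continuity of $t\mapsto\int u^R(.,t)\xi^\lambda$). Dropping the two nonnegative gradient terms on the left, one gets
\[
\int_\Omega u^R(.,t)\xi^{\lambda}\leqq\int_\Omega u_0^R\,\xi^{\lambda}+C(q,R,\lambda)\int_0^t\int_\Omega u^{R-1}\xi^{\lambda-q'}|\nabla\xi|^{q'}.
\]
Here the right-hand integrand has $u^{R-1}$, not $u^R$, so I would apply Young's inequality $u^{R-1}a\leqq\varepsilon u^R+C_\varepsilon a^R$ with $a=\xi^{(\lambda-q')/(R-1)}|\nabla\xi|^{q'/(R-1)}$ multiplied and divided appropriately by powers of $\xi$, absorbing the $\varepsilon u^R\xi^\lambda$ term, at the cost of an extra lower-order term; a cleaner route, used already in the proof of Lemma \ref{int}, is to note that by Hölder in the measure $dx\,dt$ on the support, $\int_0^t\int u^{R-1}\xi^{\lambda-q'}|\nabla\xi|^{q'}\leqq(\int_0^t\int u^R\xi^\lambda)^{(R-1)/R}\cdot(\int_0^t\int \xi^{(\lambda-Rq')}|\nabla\xi|^{Rq'})^{1/R}$, provided $\lambda$ is chosen with $\lambda\geqq Rq'$, so the second factor is $\leqq(C\eta^{-Rq'}\eta^N t)^{1/R}$. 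Combined with $\int u_0^R\xi^\lambda\leqq\|u_0\|_{L^R(B(x_0,2\eta))}^R$ and the elementary fact $\sup_{[0,t]}\int u^R\xi^\lambda=:M(t)$ satisfies $M(t)\leqq A+B\,(tM(t))^{(R-1)/R}$, one concludes $M(t)^{1/R}\leqq C(A^{1/R}+B\,t^{(R-1)/R}\cdot t^{?})$... — this is where care is needed. Actually the cleanest path is Young directly: $C u^{R-1}\xi^{\lambda-q'}|\nabla\xi|^{q'}\leqq \tfrac12 \cdot\tfrac1T\,\xi^{\lambda}u^R + C'\,T^{R-1}\xi^{\lambda-Rq'}|\nabla\xi|^{Rq'}$ is NOT quite right because of the $t$-integration; instead absorb against nothing — simply bound $u^{R-1}\leqq 1+u^R$ pointwise (valid since $R>1$) giving the extra additive constant $C\eta^{N-q'}t$, then the remaining $\int_0^t\int u^R\xi^{\lambda-q'}|\nabla\xi|^{q'}\leqq C\eta^{-q'}\int_0^t\int u^R\xi^{\lambda}\cdot$(wrong exponent on $\xi$). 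The honest fix: choose $\lambda$ large, write $\xi^{\lambda-q'}|\nabla\xi|^{q'}\leqq C\eta^{-q'}\xi^{\lambda-q'}$, and since on $\{\xi>0\}$ we need $u^R\xi^\lambda$ to appear, note $u^{R-1}\xi^{\lambda-q'} = (u^{R}\xi^\lambda)^{(R-1)/R}\cdot\xi^{\lambda-q'-\lambda(R-1)/R}$ with the residual exponent $\lambda/R - q'\geqq0$ when $\lambda\geqq Rq'$; then Young with exponents $R/(R-1)$ and $R$ absorbs a factor $\tfrac12 u^R\xi^\lambda$ into the left side over the time slab after using that $\int_0^t\int u^R\xi^\lambda\leqq t\,M(t)$, finally yielding $M(t)\lesssim \|u_0\|_{L^R(B_{2\eta})}^R + \eta^{N-Rq'}t$, hence (\ref{locint}) after taking $R$-th roots and crude bounds $a+b\leqq(a^{1/R}+b^{1/R})^R$ adjusted by constants.

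For (\ref{locinf}), the $L^\infty$ case: assuming $u\in C(\overline{B(x_0,2\eta)}\times[0,T))$, the maximum principle / comparison is the natural tool. The function $w(x,t)=\|u_0\|_{L^\infty(B(x_0,2\eta))}+C(N,q)\eta^{-q'}t$ is a supersolution provided $w_t\geqq \nu\Delta w - |\nabla w|^q$, i.e. $C\eta^{-q'}\geqq0$, which is automatic; but one must compare on the parabolic boundary of the cylinder $B(x_0,2\eta)\times(0,T)$. On the lateral boundary $u$ need not be zero, so instead one uses the localized comparison against the solution of the Dirichlet problem with the right boundary/initial data — or, more simply, one re-derives (\ref{locinf}) by passing $R\to\infty$ in (\ref{locint}): since $\eta^{N/R-q'}\to\eta^{-q'}$ and $\|u_0\|_{L^R(B(x_0,2\eta))}\to\|u_0\|_{L^\infty(B(x_0,2\eta))}$ as $R\to\infty$ (here one needs the constant $C(N,q,R)$ to stay bounded as $R\to\infty$, which should hold if one tracks it, since the only $R$-dependence enters through Young's inequality and can be made $R$-uniform by choosing $\lambda$ proportional to $R$). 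The main obstacle is precisely the bookkeeping in step two — keeping the constant $C(N,q,R)$ explicit and controlled (and in particular $R$-uniform for the limiting argument giving (\ref{locinf})), and correctly handling the absorption of the $u^R\xi^\lambda$ term over the time slab so that the estimate is genuinely of the stated form with $+t$ rather than, say, $+t^{1/R}$ or $+t\|u_0\|$. Once that algebra is organized, the three inequalities all drop out.
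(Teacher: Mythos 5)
Your overall scheme is the paper's: apply (\ref{ruc}) with a time-independent cutoff, let $s\to0$ using the trace for (\ref{zof}), absorb the $u^{R-1}$ term by Young's inequality for (\ref{locint}), and obtain (\ref{locinf}) by letting $R\to\infty$ with $R$-uniform constants. Part one and part three are fine as you describe them (the paper indeed discards your comparison-principle alternative for (\ref{locinf}) for exactly the reason you give, and passes to the limit $R\to\infty$ instead).

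The genuine gap is in step two, and it is precisely the point you flag as "where care is needed" without resolving it. Your concluding bound $M(t)\lesssim\Vert u_{0}\Vert_{L^{R}(B_{2\eta})}^{R}+\eta^{N-Rq^{\prime}}t$ cannot be correct with an $\varepsilon$-free constant, and taking $R$-th roots of it produces $t^{1/R}$, not the stated $t$. The missing idea is a \emph{time-dependent} choice of the Young parameter. Concretely: with $\lambda=Rq^{\prime}$, Young gives $u^{R-1}\xi^{\lambda-q^{\prime}}|\nabla\xi|^{q^{\prime}}\leqq\varepsilon u^{R}\xi^{\lambda}+C\varepsilon^{1-R}\xi^{\lambda-Rq^{\prime}}|\nabla\xi|^{Rq^{\prime}}$; setting $\varpi(t)=\sup_{\sigma\in\left[s,t\right]}\int u^{R}(.,\sigma)\xi^{Rq^{\prime}}$ one gets $\varpi(t)\leqq\int u^{R}(.,s)\xi^{Rq^{\prime}}+\varepsilon(t-s)\varpi(t)+C\varepsilon^{1-R}\eta^{N-Rq^{\prime}}t$, and the choice $\varepsilon=1/2t$ both absorbs $\varepsilon(t-s)\varpi(t)\leqq\tfrac12\varpi(t)$ into the left side and turns the complementary term into $C\,2^{R-1}\eta^{N-Rq^{\prime}}t^{R}$. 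It is the resulting power $t^{R}$ (not $t$) that yields $C^{1/R}\eta^{N/R-q^{\prime}}t$ after taking $R$-th roots, i.e.\ the stated form of (\ref{locint}); and since the $R$-th root of $2^{R-1}C_{0}^{Rq^{\prime}}C(N)$ stays bounded as $R\to\infty$, this same computation supplies the $R$-uniform constants you correctly identified as necessary for the limiting argument giving (\ref{locinf}). Without this $\varepsilon=1/2t$ device (or an equivalent, e.g.\ Gronwall in integrated form), the absorption step does not close and the exponent of $t$ comes out wrong.
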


\begin{proof}
We can assume that $0\in\Omega$ and $x_{0}=0.$ We take $\xi\in C_{c}%
^{1}(\Omega),$ independent of $t,$ with values in $\left[  0,1\right]  ,$ and
$R=1$ in (\ref{ruc}), $\lambda=q^{\prime}$. Then for any $0<s<t<T,$%
\[
\int_{\Omega}u(.,t)\xi^{q^{\prime}}+\frac{1}{2}\int_{s}^{t}\int_{\Omega
}|\nabla u|^{q}\xi^{q^{\prime}}\leqq\int_{\Omega}u(.,s)\xi^{q^{\prime}%
}+C(q)\int_{s}^{t}\int_{\Omega}|\nabla\xi|^{q^{\prime}}\leqq\int_{\Omega
}u(.,s)\xi^{q^{\prime}}+C(q)t\int_{\Omega}|\nabla\xi|^{q^{\prime}}.
\]
Hence as $s\rightarrow0,$ we get
\begin{equation}
\int_{\Omega}u(.,t)\xi^{q^{\prime}}+\frac{1}{2}\int_{0}^{t}\int_{\Omega
}|\nabla u|^{q}\xi^{q^{\prime}}\leqq C(q)t\int_{\Omega}|\nabla\xi|^{q^{\prime
}}+\int_{\Omega}\xi^{q^{\prime}}du_{0}. \label{zif}%
\end{equation}
Then taking $\xi=1$ in $B_{\eta}$ with support in $B_{2\eta}$ and $|\nabla
\xi|\leqq C_{0}(N)/\eta,$
\begin{equation}
\int_{B_{\eta}}u(x,t)\leqq C(N,q)\eta^{N-q^{\prime}}t+\int_{B_{2\eta}}%
\xi^{q^{\prime}}du_{0}, \label{cla}%
\end{equation}
hence we get (\ref{zof}). Next assume $u_{0}\in L_{loc}^{R}(\Omega)$ $(R>1),$
and $u\in C(\left[  0,T\right)  ;L_{loc}^{R}(\Omega)).$ Then from (\ref{ruc}),
for any $0<s<t\leqq\tau<T,$ we find,
\begin{align*}%
{\textstyle\int_{\Omega}}
u^{R}(.,t)\xi^{\lambda}+\frac{1}{2}\int_{s}^{\tau}\int_{\Omega}u^{R-1}|\nabla
u|^{q}\xi^{\lambda}  &  \leqq%
{\textstyle\int_{\Omega}}
u^{R}(.,s)\xi^{\lambda}+\int_{s}^{t}\int_{\Omega}u^{R-1}\xi^{\lambda
-q^{\prime}}|\nabla\xi|^{q^{\prime}}\\
&  \leqq%
{\textstyle\int_{\Omega}}
u^{R}(.,s)\xi^{\lambda}+\varepsilon\int_{s}^{t}\int_{B_{2\eta}}u^{R}%
\xi^{\lambda}+\varepsilon^{1-R}\int_{s}^{t}\int_{B_{2\eta}}\xi^{\lambda
-Rq^{\prime}}|\nabla\xi|^{Rq^{\prime}}.
\end{align*}
Taking $\lambda=Rq^{\prime},$ and $\xi$ as above, we find%
\[
\int_{B_{2\eta}}u^{R}(.,t)\xi^{Rq^{\prime}}\leqq\int_{B_{2\eta}}u^{R}%
(.,s)\xi^{Rq^{\prime}}+\varepsilon\int_{s}^{t}\int_{B_{2\eta}}u^{R}%
\xi^{Rq^{\prime}}+\varepsilon^{1-R}C(N)C_{0}^{Rq^{\prime}}(N)\eta
^{N-Rq^{\prime}}t.
\]
Next we set $\varpi(t)=\sup_{\sigma\in\left[  s,t\right]  }\int_{B_{2\eta}%
}u^{R}(.,\sigma)\xi^{Rq^{\prime}}.$ Then
\[
\varpi(t)\leqq\int_{B_{2\eta}}u^{R}(.,s)\xi^{Rq^{\prime}}+\varepsilon
(t-s)\varpi(t)+\varepsilon^{1-R}C(N)C_{0}^{Rq^{\prime}}(N)\eta^{N-Rq^{\prime}%
}t.
\]
Taking $\varepsilon=1/2t,$ we get
\[
\frac{1}{2}\int_{B_{2\eta}}u^{R}(.,t)\xi^{Rq^{\prime}}\leqq\int_{B_{2\eta}%
}u^{R}(.,s)\xi^{rq^{\prime}}+C(N)C_{0}^{Rq^{\prime}}(N)\eta^{N-Rq^{\prime}%
}t^{R}.
\]
Then going to the limit as $s\rightarrow0,$%
\begin{equation}
\int_{B_{\eta}}u^{R}(x,t)\leqq C(N)C_{0}^{Rq^{\prime}}(N)\eta^{N-Rq^{\prime}%
}t^{R}+\int_{B_{2\eta}}u_{0}^{R}\xi^{Rq^{\prime}}, \label{zaf}%
\end{equation}
thus (\ref{locint}) follows.

If $u\in C(\overline{B_{2\rho}}\times\left[  0,T\right)  ),$ then (\ref{zaf})
holds for any $R\geqq1,$ implying
\[
\left\Vert u(.,t)\right\Vert _{L^{R}(B_{\eta})}\leqq C^{\frac{1}{R}}%
(N)C_{0}^{q^{\prime}}(N)\eta^{\frac{N}{R}-q^{\prime}}t+\left\Vert
u_{0}\right\Vert _{L^{R}(B_{2\eta})},
\]
and (\ref{locint}) follows as $R\rightarrow\infty$.
\end{proof}

\subsection{Regularizing effect of the heat operator}

We first give a first regularizing effect due to the Laplace operator in
$Q_{\Omega,T}$, for any domain $\Omega,$ for classical or weak solutions in
terms of the initial data.

\begin{theorem}
\label{local} Let $q>1.$ Let $u$ be any nonnegative weak subsolution of
equation (\ref{un}) in $Q_{\Omega,T}$, and let $B(x_{0},2\eta)\subset\Omega$
such that $u$ has a trace $u_{0}\in\mathcal{M}^{+}(B(x_{0},2\eta)).$ Then for
any $\tau<T,$ and any $t\in\left(  0,\tau\right]  ,$
\begin{equation}
\sup_{x\in B(x_{0},\eta/2)}u(x,t)\leqq Ct^{-\frac{N}{2}}(t+\int_{B(x_{0}%
,\eta)}du_{0}),\qquad C=C(N,q,\nu,\eta,\tau). \label{locma}%
\end{equation}
Moreover if $u_{0}\in L_{loc}^{R}(\Omega)$ $(R>1),$ and $u\in C(\left[
0,T\right)  ;L_{loc}^{R}(\Omega)),$ then
\begin{equation}
\sup_{x\in B(x_{0},\eta/2)}u(x,t)\leqq Ct^{-\frac{N}{2R}}(t+\left\Vert
u_{0}\right\Vert _{L^{R}(B(x_{0},\eta))}),\qquad C=C(N,q,\nu,R,\eta,\tau).
\label{locmo}%
\end{equation}

\end{theorem}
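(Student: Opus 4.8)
The plan is to combine the local integral bound of Lemma \ref{cor} with the standard $L^1_{loc}$–$L^\infty$ smoothing estimate \eqref{nusc} for $\nu$-subcaloric functions from Remark \ref{subreg}. The key point is that $u$ is a subsolution of the heat equation (dropping the nonnegative term $|\nabla u|^q$), so we may apply a purely parabolic argument; the gradient term only costs us the extra additive $t$ in the right-hand sides, which is already present in Lemma \ref{cor}. First I would reduce to $x_0=0$ by translation, and split into the cases $t$ small and $t$ of order $1$.

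For the case $R=1$: fix $t\in(0,\tau]$ and apply \eqref{nusc} with $\rho=\sqrt{t}$ (after first checking $\rho$ is small enough that $B(0,\sqrt t)\subset B(0,\eta/2)$, which one arranges by choosing a cutoff radius $\min(\sqrt t,\eta)$ and absorbing the dependence on $\eta,\tau$ into $C$). This gives
\[
\sup_{B(0,\sqrt t/2)}u(\cdot,t)\leqq C(N,\nu)\,t^{-(N+2)/2}\int_{t/2}^{t}\int_{B(0,\sqrt t)}u(x,\sigma)\,dx\,d\sigma .
\]
For each $\sigma\in[t/2,t]$ the inner space integral is bounded, by \eqref{zof} of Lemma \ref{cor} applied on $B(0,\eta)$ (valid since $u$ has a trace on $B(0,2\eta)$ and $\sqrt t\leqq\eta$ up to constants), by $C(N,q)\eta^{N-q'}\sigma+\int_{B(0,2\eta)}du_0\leqq C(t+\int_{B(0,\eta)}du_0)$ with $C=C(N,q,\eta,\tau)$. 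Integrating in $\sigma$ over an interval of length $t/2$ yields an extra factor $t$, so altogether
\[
\sup_{B(0,\sqrt t/2)}u(\cdot,t)\leqq C\,t^{-(N+2)/2}\cdot t\cdot\bigl(t+\textstyle\int_{B(0,\eta)}du_0\bigr)=C\,t^{-N/2}\bigl(t+\textstyle\int_{B(0,\eta)}du_0\bigr),
\]
which is \eqref{locma} on the smaller ball $B(0,\sqrt t/2)$; when $\sqrt t/2<\eta/2$ one iterates or simply notes that for $t$ bounded below (of order $\eta^2$) the factor $t^{-N/2}$ is harmless and one uses \eqref{nusc} with $\rho$ of order $\eta$ directly, covering $B(0,\eta/2)$ by finitely many balls of radius $\sqrt t/2$.

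For $R>1$ the same scheme works, replacing the use of \eqref{zof} by \eqref{locint}: one writes $\int_{B(0,\sqrt t)}u\leqq |B(0,\sqrt t)|^{1-1/R}\|u(\cdot,\sigma)\|_{L^R(B(0,\sqrt t))}\leqq C t^{\frac{N}{2}(1-1/R)}\bigl(t+\|u_0\|_{L^R(B(0,\eta))}\bigr)$ by Hölder and Lemma \ref{cor}, then feeds this into \eqref{nusc}; the powers of $t$ combine to give $t^{-(N+2)/2}\cdot t\cdot t^{\frac N2(1-1/R)}=t^{-N/(2R)}$, which is \eqref{locmo}. The main obstacle—really the only delicate point—is the bookkeeping of scales: one must keep $\rho=\min(c\sqrt t,\eta)$ so that all the cylinders in \eqref{nusc} stay inside $B(0,2\eta)\times(0,T)$ where Lemma \ref{cor} is available, and then check that on the regime $\sqrt t\gtrsim\eta$ the estimate still holds with the stated $t$-power (it does, since there $t$ is comparable to a constant depending on $\eta,\tau$). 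Everything else is a routine Hölder-and-substitute computation, and the dependence of the constant is exactly $C(N,q,\nu,R,\eta,\tau)$ as claimed, with $\nu$ entering only through the subcaloric estimate \eqref{nusc}.
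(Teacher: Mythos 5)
Your proposal is correct and follows essentially the same route as the paper: combine the $\nu$-subcaloric mean-value inequality (\ref{nusc}) with the integral bounds of Lemma \ref{cor}, choosing $\rho^{2}$ comparable to $\min(t,\eta^{2})$ and absorbing the large-$t$ regime into a constant depending on $\eta,\tau$. The only (minor) divergence is in the case $R>1$, where the paper applies (\ref{nusc}) directly to $u^{R}$ (which is again $\nu$-subcaloric, after the mollification $u_{\delta,n}$) instead of interpolating by H\"older between $L^{1}$ and $L^{R}$ as you do; both yield the same exponent $t^{-N/(2R)}$.
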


\begin{proof}
We still assume that $x_{0}=0\in\Omega.$ Let $\xi\in C_{c}^{1}(B_{2\eta})$ be
nonnegative, radial, with values in $\left[  0,1\right]  ,$ with $\xi=1$ on
$B_{\eta}$ and $|\nabla\xi|\leqq C_{0}(N)/\eta$. Since $u$ is $\nu
$-subcaloric, from (\ref{nusc}), for any $\rho\in(0,\eta)$ such that $\rho
^{2}\leqq t<\tau,$
\begin{equation}
\sup_{B_{\eta/2}}u(.,t)\leqq C(N,\nu)\rho^{-(N+2)}\int_{t-\rho^{2}/4}^{t}%
\int_{B_{\eta}}u, \label{moy}%
\end{equation}
hence from Lemma \ref{cor},
\[
\sup_{B_{\eta/2}}u(.,t)\leqq C(N,q,\nu)\rho^{-N}(\eta^{N-q^{\prime}}%
t+\int_{B_{2\eta}}du_{0}).
\]
Let $k_{0}\in\mathbb{N}$ such that $k_{0}\eta^{2}/2\geqq\tau.$ For any
$t\in\left(  0,\tau\right]  ,$ there exists $k\in\mathbb{N}$ with $k\leqq
k_{0}$ such that $t\in\left(  k\eta^{2}/2,(k+1)\eta^{2}/2\right]  .$ Taking
$\rho^{2}=t/(k+1),$ we find
\begin{align}
\text{sup}_{B_{\eta/2}}u(.,t)  &  \leqq C(N,q,\nu)(k_{0}+1)^{\frac{N}{2}%
}t^{-\frac{N}{2}}(\eta^{N-q^{\prime}}t+\int_{B_{2\eta}}du_{0})\nonumber\\
&  \leqq C(N,q,\nu)(\eta^{-N}\tau^{\frac{N}{2}}+1)t^{-\frac{N}{2}}%
(\eta^{N-q^{\prime}}t+\int_{B_{2\eta}}du_{0}). \label{ast}%
\end{align}
Thus we obtain (\ref{locma}). Next assume that $u\in C(\left[  0,T\right)
;L_{loc}^{R}(B_{2\eta})),$ with $R>1$. We still approximate $u$ by
$u_{\delta,n}=(u+\delta)\ast\varphi_{n}$, where $(\varphi_{n})$ is a sequence
of mollifiers, and $\delta>0.$ Since $u$ is $\nu$-subcaloric, then
$u_{\delta,n}^{R}$ is also $\nu$-subcaloric. Then for any $\rho\in(0,\eta)$
such that $\rho^{2}\leqq t<\tau,$ we have%
\[
\text{sup}_{B_{\eta/2}}u_{\delta,n}^{R}(.,t)\leqq C(N,\nu)\rho^{-(N+2)}%
\int_{t-\rho^{2}/4}^{t}\int_{B\rho/2}u_{\delta,n}^{R},
\]
hence as $\delta\rightarrow0$ and $n\rightarrow\infty,$ from Lemma
(\ref{cor}),
\begin{equation}
\text{sup}_{B_{\eta/2}}u^{R}(.,t)\leqq C(N,\nu)\rho^{-(N+2)}\int_{t-\rho
^{2}/4}^{t}\int_{B\rho/2}u^{R}\leqq C(N,q,\nu,R)(\eta^{-N}\tau^{\frac{N}{2}%
}+1)(\eta^{N-Rq^{\prime}}t^{R}+\int_{B_{2\eta}}u_{0}^{R}). \label{bst}%
\end{equation}
We deduce (\ref{locmo}) as above.
\end{proof}

\section{Global estimates in $\mathbb{R}^{N}$\label{sec4}}

We first show that the universal estimate of the gradient (\ref{versa})
implies the estimate (\ref{esta}) of the function:

\begin{theorem}
\label{top} Let $q>1.$ Let $u$ be a classical solution of equation (\ref{un})
in $Q_{\mathbb{R}^{N},T}.$ Assume that there exists a ball $B(x_{0},2\eta)$
such that $u$ has a trace $u_{0}\in\mathcal{M}^{+}((B(x_{0},2\eta)).$ If $u$
satisfies (\ref{versa}), then for any $t\in\left(  0,T\right)  ,$%
\begin{equation}
u(x,t)\leqq C(q)t^{-\frac{1}{q-1}}\left\vert x-x_{0}\right\vert ^{q^{\prime}%
}+C(t^{-\frac{1}{q-1}}+t+\int_{B(x_{0},\eta)}du_{0}),\qquad C=C(N,q,\eta),
\label{plec}%
\end{equation}
If \thinspace$u_{0}\in L_{loc}^{R}(\Omega),$ $R\geqq1$ and $u\in C(\left[
0,T\right)  ;L_{loc}^{R}(\Omega)),$ then
\begin{equation}
u(x,t)\leqq C(q)t^{-\frac{1}{q-1}}\left\vert x-x_{0}\right\vert ^{q^{\prime}%
}+Ct^{-\frac{N}{2R}}(t+\left\Vert u_{0}\right\Vert _{L^{R}(B(x_{0},\eta
))}),\qquad C=C(N,q,R,\nu,\eta). \label{plac}%
\end{equation}%
\begin{equation}
u(x,t)\leqq C(q)t^{-\frac{1}{q-1}}\left\vert x-x_{0}\right\vert ^{q^{\prime}%
}+C(t^{-\frac{1}{q-1}}+t+\left\Vert u_{0}\right\Vert _{L^{R}(B(x_{0},\eta
))}),\qquad C=C(N,q,R,\eta). \label{pluc}%
\end{equation}

\end{theorem}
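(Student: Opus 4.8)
The plan is to deduce the pointwise bound \eqref{plec} from the gradient estimate \eqref{versa} by integrating along rays emanating from a point where $u$ is already controlled by the local integral estimates of Section~\ref{sec3}. First I would fix $t\in(0,T)$ and rewrite \eqref{versa} as a differential inequality for $u^{1/q'}$: since $q'=q/(q-1)$, one has $\nabla(u^{1/q'})=\tfrac1{q'}u^{1/q'-1}\nabla u$, so \eqref{versa} gives
\[
\left\vert \nabla(u^{1/q'})(.,t)\right\vert \leqq \frac1{q'}\left(\frac{1}{(q-1)t}\right)^{1/q},
\]
a \emph{uniform} Lipschitz bound in $x$ for the function $v:=u^{1/q'}$, with Lipschitz constant $L(t)=C(q)t^{-1/q}$. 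Hence for any $x$ and any reference point $y\in B(x_0,\eta)$,
\[
u(x,t)^{1/q'}\leqq u(y,t)^{1/q'}+L(t)\,\vert x-y\vert .
\]

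The key step is then to choose $y$ well and to bound $u(y,t)$. Averaging the last inequality over $y\in B(x_0,\eta)$ (or picking $y$ near the minimum of $u(.,t)$ on that ball) and using $\vert x-y\vert\leqq \vert x-x_0\vert+\eta$, I get
\[
u(x,t)^{1/q'}\leqq \Big(\fint_{B(x_0,\eta)} u(y,t)^{1/q'}\,dy\Big) + C(q)t^{-1/q}\big(\vert x-x_0\vert+\eta\big).
\]
By Jensen (since $1/q'<1$) the average of $u^{1/q'}$ is at most $\big(\fint u\big)^{1/q'}$, and $\fint_{B(x_0,\eta)}u(.,t)$ is controlled by Lemma~\ref{cor}, estimate \eqref{zof}: it is $\leqq C(N,q)(t+\int_{B(x_0,2\eta)}du_0)/\vert B(x_0,\eta)\vert$. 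Then I raise both sides to the power $q'$, using the elementary inequality $(a+b+c)^{q'}\leqq C(q)(a^{q'}+b^{q'}+c^{q'})$ to split the terms, and absorb the constant $\eta$-powers into $C(N,q,\eta)$. The term $\big(t^{-1/q}\vert x-x_0\vert\big)^{q'}=t^{-q'/q}\vert x-x_0\vert^{q'}=t^{-1/(q-1)}\vert x-x_0\vert^{q'}$ yields the leading term; the term $\big(t^{-1/q}\eta\big)^{q'}$ contributes the $C\,t^{-1/(q-1)}$ piece; and the term $\big(t^{-1/(q-1)}(t+\int du_0)\big)$ contributes, after the $q'$-power and a further $(t+\int du_0)^{q'}\leqq C(t^{q'}+(\int du_0)^{q'})$ split — here one also uses that for $t\leqq\tau$ bounded, lower powers dominate — the remaining $C(t+\int_{B(x_0,\eta)}du_0)$ terms. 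This gives \eqref{plec}.

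For \eqref{plac} and \eqref{pluc} the only change is the choice of bound on $u(y,t)$ at the reference point: instead of the $L^1$ average via \eqref{zof}, I would bound $u(y,t)\leqq \sup_{B(x_0,\eta/2)}u(.,t)$ using the regularizing estimate \eqref{locmo} of Theorem~\ref{local} (which requires $u_0\in L^R_{loc}$, $u\in C([0,T);L^R_{loc})$), giving $\leqq Ct^{-N/2R}(t+\Vert u_0\Vert_{L^R(B(x_0,\eta))})$; substituting this for the average term and repeating the same $q'$-power bookkeeping produces \eqref{plac}. For \eqref{pluc} one instead uses the $L^R$ integral estimate \eqref{locint} together with the subcaloric mean-value inequality more crudely, or simply interpolates \eqref{plec}-type reasoning with $\Vert u_0\Vert_{L^R}$ in place of $\int du_0$; the extra $t^{-1/(q-1)}$ on the right comes exactly from the $(t^{-1/q}\eta)^{q'}$ term as before.

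The main obstacle I anticipate is purely bookkeeping: tracking how the various negative and positive powers of $t$ and of $\eta$ combine after raising a sum to the power $q'$, and checking that for $t$ restricted to a bounded interval $(0,\tau]$ the "bad" powers can be consolidated into the clean form stated (e.g.\ $t^{q'}\leqq \tau^{q'-1}t$, $t^{-N/2R}\cdot t = t^{1-N/2R}$ vs.\ the claimed powers). There is no deep analytic difficulty here beyond what \eqref{versa} already provides: the content is that the \emph{universal gradient bound makes $u^{1/q'}(.,t)$ globally Lipschitz with an explicit constant}, so the growth in $\vert x\vert$ is forced to be at most $\vert x\vert^{q'}$, and the local integral estimates pin down the additive constant. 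One small point to be careful about: $u$ is only assumed $C^{2,1}$, which is enough to integrate $\nabla(u^{1/q'})$ along segments since $u\geqq0$ and the bound on $\nabla u$ is uniform, but at points where $u(.,t)=0$ one should argue via $u+\varepsilon$ and let $\varepsilon\to0$, or simply note that \eqref{versa} forces $u(.,t)\equiv0$ on any such ball-slice if $\nabla u$ vanishes there too — in any case this is a routine regularization.
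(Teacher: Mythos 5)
Your proposal is correct and follows essentially the same route as the paper: the gradient bound (\ref{versa}) is recast as a uniform Lipschitz bound on $u^{1/q'}(\cdot,t)$, the value at a reference point in $B(x_0,\eta)$ is controlled via the local integral estimates of Lemma \ref{cor} (resp.\ Theorem \ref{local} for (\ref{plac})), and the inequality is raised to the power $q'$. The only cosmetic difference is that you average the Lipschitz inequality over the reference point $y$ and invoke Jensen, whereas the paper swaps the roles of $x$ and $x_0$ and integrates $u^R(x,t)$ over the ball — the same computation — and in fact your Jensen step makes the final bookkeeping cleaner than you fear, since the averaged term returns to power one after raising to $q'$, so no $(t+\int du_0)^{q'}$ ever appears.
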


\begin{proof}
Estimate (\ref{versa}) is equivalent to
\begin{equation}
\left\vert \nabla(u^{\frac{1}{q^{\prime}}})\right\vert (.,t)\leqq
\frac{(q-1)^{\frac{1}{q^{\prime}}}}{q}t^{-\frac{1}{q}},\qquad\qquad\text{in
}Q_{\mathbb{R}^{N},T}. \label{versi}%
\end{equation}
Then with constants $C(q)$ only depending of $q,$
\begin{equation}
u^{\frac{1}{q^{\prime}}}(x,t)\leqq u^{\frac{1}{q^{\prime}}}(x_{0}%
,t)+C(q)t^{-\frac{1}{q}}\left\vert x-x_{0}\right\vert , \label{masc}%
\end{equation}
then%
\begin{equation}
u(x,t)\leqq C(q)(u(x_{0},t)+t^{-\frac{1}{q-1}}\left\vert x-x_{0}\right\vert
^{q^{\prime}}), \label{misc}%
\end{equation}
and, from Theorem \ref{local},
\[
u(x_{0},t)\leqq C(N,q,R,\nu,\eta)t^{-\frac{N}{2R}}(t+\left\Vert u_{0}%
\right\Vert _{L^{R}(B(x_{0},\eta))}).
\]
Therefore (\ref{plac}) follows. Also, interverting $x$ and $x_{0},$ for any
$R\geqq1,$
\[
u^{R}(x_{0},t)\leqq C(q,R)(u^{R}(x,t)+t^{-\frac{R}{q-1}}\left\vert
x-x_{0}\right\vert ^{Rq^{\prime}}).
\]
Integrating on $B(x_{0},\eta/2),$ we get
\[
\eta^{N}u^{R}(x_{0},t)\leqq C(q,R)(\int_{B(x_{0},\eta/2)}u^{R}(.,t)+t^{-\frac
{R}{q-1}}\eta^{N-Rq^{\prime}});
\]
using Lemma \ref{cor}, we deduce
\[
u(x_{0},t)\leqq C(N,q,R,\eta)(t^{-\frac{1}{q-1}}+t+\int_{B(x_{0},\eta)}%
du_{0}),
\]
and if $u_{0}\in L_{loc}^{R}(\Omega),$
\[
u(x_{0},t)\leqq C(N,q,R,\eta)(t^{-\frac{1}{q-1}}+t+\left\Vert u_{0}\right\Vert
_{L^{R}(B(x_{0},\eta))}),
\]
and the conclusions follow from (\ref{misc}).\medskip
\end{proof}

\begin{remark}
In particular, the estimates (\ref{plec})-(\ref{pluc}) hold for solutions with
$u_{0}\in C_{b}(\mathbb{R}^{N}),$ and more generally for limits $a.e.$ of such
solutions, that we can call \textbf{reachable} solutions. Inegality
(\ref{masc}) was used in \cite[Theorem 3.3]{BeBALa} for obtaining local
estimates of classical of bounded solutions.in $Q_{\mathbb{R}^{N},T}.$
\end{remark}

In order to prove Theorem \ref{fund}, we first give an estimate of the type of
(\ref{esta}) on a time interval $\left(  0,\tau\right]  $, \textit{with
constants depending on} $\tau$ \textit{and} $\nu,$ which is \textit{not
obtained from any estimate of the gradient}. Our result is based on the
construction of suitable supersolutions in annulus of type $Q_{B_{3\rho
}\backslash\overline{B_{\rho}},\infty}$, $\rho>0.$ For the construction we
consider the function $t\in\left(  0,\infty\right)  \longmapsto\psi_{h}%
(t)\in(1,\infty),$ where $h>0$ is a parameter, solution of the problem%
\begin{equation}
(\psi_{h})_{t}+h(\psi_{h}^{q}-\psi_{h})=0\quad\text{in }\left(  0,\infty
\right)  ,\qquad\psi_{h}(0)=\infty,\quad\psi_{h}(\infty)=1, \label{jip}%
\end{equation}
given explicitely by $\psi_{h}(t)=(1-e^{-h(q-1)t})^{-\frac{1}{q-1}};$ hence
$\psi_{h}^{q}-\psi_{h}\geqq0,$ and for any $t>0,$
\begin{equation}
((q-1)ht)^{-\frac{1}{q-1}}\leqq\psi_{h}(t)\leqq2^{\frac{1}{q-1}}%
(1+((q-1)ht)^{-\frac{1}{q-1}}). \label{jop}%
\end{equation}
since, for $x>0,$ $x(1-x/2)\leqq1-e^{-x}\leqq x,$ hence $x/2\leqq1-e^{-x}\leqq
x,$ for $x\leqq1.$\medskip

\begin{proposition}
\label{plus}Let $q>1.$ Then there exists a nonnegative function $V$ defined in
$Q_{B_{3}\times(0,\infty)},$ such that $V$ is a supersolution of equation
(\ref{un}) on $Q_{B_{3}\backslash\overline{B_{1}},\infty}$,$,$ and $V$
converges to $\infty$ as $t\rightarrow0,$ uniformly on $B_{3}$ and converges
to $\infty$ as $x\rightarrow\partial B_{3},$ uniformly on $\left(
0,\tau\right)  $ for any $\tau<\infty.$ And $V$ has the form%
\begin{equation}
V(x,t)=e^{t}\Phi(\left\vert x\right\vert )\psi_{h}(t)\qquad\text{in }%
Q_{B_{3},\infty} \label{dev}%
\end{equation}
for some $h=h(N,q,\nu)>0,$ where $\psi_{h}$ is given by (\ref{jip}), and
$\Phi$ is a suitable radial function depending on $N,q,\nu,$ such that%
\begin{equation}
-\nu\Delta\Phi+\Phi+\left\vert \nabla\Phi\right\vert ^{q}\geqq0\qquad\text{in
}B_{3}. \label{sups}%
\end{equation}

\end{proposition}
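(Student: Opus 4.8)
The plan is to build the supersolution $V$ in the explicitly separated form $V(x,t)=e^{t}\Phi(|x|)\psi_{h}(t)$ and to reduce the supersolution inequality to the single ODE inequality (\ref{sups}) for $\Phi$ on $B_3$, choosing $h$ afterwards. First I would compute the parabolic operator applied to $V$. Writing $\Phi=\Phi(r)$, $r=|x|$, and using that $\psi_h$ solves $(\psi_h)_t+h(\psi_h^q-\psi_h)=0$, we get
\[
V_t-\nu\Delta V+|\nabla V|^{q}
= e^{t}\psi_{h}\Phi + e^{t}(\psi_h)_t\Phi - \nu e^{t}\psi_{h}\,\Delta\Phi + e^{tq}\psi_{h}^{q}\,|\nabla\Phi|^{q}.
\]
Since $e^{tq}\psi_h^q\geq e^t\psi_h^q$ for $t\geq0$ and $\Phi,\psi_h>0$, the right-hand side is bounded below by
\[
e^{t}\psi_{h}\bigl(\Phi-\nu\Delta\Phi+|\nabla\Phi|^{q}\bigr) + e^{t}\Phi\bigl((\psi_h)_t + h(\psi_h^q-\psi_h)\bigr)\cdot(\text{sign bookkeeping}),
\]
and the key point is that $\psi_h^q-\psi_h\geq0$, so the term $e^t(\psi_h)_t\Phi=-h e^t\Phi(\psi_h^q-\psi_h)$ is the only negative contribution; it is absorbed by the $e^{tq}\psi_h^q|\nabla\Phi|^q$ term provided $|\nabla\Phi|^q$ is large enough relative to $h\Phi$ near the inner sphere, while away from the inner sphere one simply needs (\ref{sups}). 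Thus the strategy is: construct $\Phi$ blowing up at $r=3$, staying $\geq1$ on $B_3\setminus\overline{B_1}$, and satisfying (\ref{sups}) with a margin large enough to swallow the $\psi_h$-defect; then pick $h=h(N,q,\nu)$ accordingly.

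Second, I would construct $\Phi$ explicitly. A natural ansatz is $\Phi(r)=A + B(3-r)^{-\beta}$ on $[0,3)$ with $\beta>0$, $A\geq1$, $B>0$ to be chosen, which blows up like $(3-r)^{-\beta}$ as $r\to3$. Then $|\nabla\Phi|=\Phi'(r)=\beta B(3-r)^{-\beta-1}$ and
\[
\Delta\Phi=\Phi''+\tfrac{N-1}{r}\Phi' = \beta(\beta+1)B(3-r)^{-\beta-2} + \tfrac{N-1}{r}\beta B(3-r)^{-\beta-1}.
\]
Plugging into (\ref{sups}): the dominant singular term as $r\to3$ on the left is $|\nabla\Phi|^q=(\beta B)^q(3-r)^{-q(\beta+1)}$ versus $-\nu\Delta\Phi\sim-\nu\beta(\beta+1)B(3-r)^{-\beta-2}$; for the gradient term to dominate we need $q(\beta+1)\geq \beta+2$, i.e. $\beta\geq(2-q)/(q-1)$ when $q<2$ (and any $\beta>0$ works when $q\geq2$ since then $q(\beta+1)>\beta+2$ automatically for $\beta$ large). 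Actually it is cleanest to take $\beta$ large: then $q(\beta+1)-(\beta+2)=(q-1)\beta-(2-q)>0$, so near $r=3$ the term $|\nabla\Phi|^q$ beats $\nu|\Delta\Phi|$ outright, and (\ref{sups}) holds on a neighbourhood $\{3-r<\epsilon\}$ for any $B>0$. On the compact region $\{1\le r\le 3-\epsilon\}$ (and on all of $\overline{B_3}\cap\{r\le 3-\epsilon\}$) the functions $\Phi,\Phi',\Phi''$ are bounded, so choosing $A$ large makes $\Phi-\nu\Delta\Phi\geq-\nu\Delta\Phi$ dominate trivially — more precisely $\Phi\ge A$ while $|\nu\Delta\Phi|\le$ const$(\epsilon)B$, so $A\ge\,$const$(\epsilon)B$ gives (\ref{sups}) there. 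Hence (\ref{sups}) holds on all of $B_3$.

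Third, I must check the boundary/initial behaviour claimed: $V\to\infty$ as $t\to0$ uniformly on $B_3$ — this is immediate since $\psi_h(t)\to\infty$ as $t\to0$ by (\ref{jip})–(\ref{jop}) and $e^t\Phi(|x|)\ge A\ge1$ on $\overline{B_3}$, so $V(x,t)\ge A\psi_h(t)\to\infty$ uniformly. And $V\to\infty$ as $x\to\partial B_3$ uniformly on $(0,\tau)$: since $\psi_h(t)\ge1$ and $e^t\ge1$, $V(x,t)\ge\Phi(|x|)\to\infty$ as $|x|\to3$, uniformly in $t$. Finally I re-examine the supersolution inequality on $Q_{B_3\setminus\overline{B_1},\infty}$ incorporating the $\psi_h$-defect: from the computation above,
\[
V_t-\nu\Delta V+|\nabla V|^{q}\ge e^{t}\psi_{h}\bigl(-\nu\Delta\Phi+\Phi+|\nabla\Phi|^q\bigr) + e^{t}\Phi\,(\psi_h)_t + (e^{tq}-e^t)\psi_h^q|\nabla\Phi|^q \ge e^t\bigl(\psi_h(\text{(\ref{sups})-expression})\ge0\bigr) - he^t\Phi\psi_h^q,
\]
so I need the margin in (\ref{sups}) to beat $h\Phi\psi_h^{q-1}$; but $\psi_h^{q-1}(t)=(1-e^{-h(q-1)t})^{-1}$ can be huge near $t=0$. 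This is the main obstacle: I cannot simply absorb $h\Phi\psi_h^q$ into $e^t\psi_h\cdot$(bounded). The fix is to keep the $(\psi_h)_t$ term together with $h(\psi_h^q-\psi_h)$ exactly — they cancel — and instead use the $e^{tq}$ versus $e^t$ discrepancy more carefully, OR (cleaner) to redo the separation so that the ODE (\ref{jip}) for $\psi_h$ already absorbs the $h\Phi\psi_h^q$ term: indeed if I instead demand
\[
-\nu\Delta\Phi+|\nabla\Phi|^q\ge 0\quad\text{and}\quad h\Phi\le|\nabla\Phi|^q\ \text{on}\ B_3\setminus\overline{B_1},
\]
then $e^t\psi_h(-\nu\Delta\Phi)+e^t\psi_h^q|\nabla\Phi|^q - he^t\Phi\psi_h^q\ge e^t\psi_h^q(|\nabla\Phi|^q-h\Phi)\ge0$ once $\psi_h\ge1$, and the leftover $e^t\psi_h\Phi\ge0$ and $(e^{tq}-e^t)(\cdots)\ge0$ are harmless. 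Since on $B_3\setminus\overline{B_1}$ we have $|\nabla\Phi|\ge\beta B\,2^{-\beta-1}>0$ bounded below and $\Phi$ bounded on the part away from $r=3$ while near $r=3$ both $|\nabla\Phi|^q$ and $\Phi$ blow up with $|\nabla\Phi|^q$ of higher order (for $\beta$ large), the inequality $h\Phi\le|\nabla\Phi|^q$ holds on all of $B_3\setminus\overline{B_1}$ provided $h$ is chosen small enough depending on $N,q,\nu,A,B,\beta$, i.e. $h=h(N,q,\nu)$. This dictates the order of choices: fix $\beta$ large (depending on $q$), then $\epsilon$ and $B$, then $A$ large so (\ref{sups}) and $-\nu\Delta\Phi+|\nabla\Phi|^q\ge0$ hold, then finally $h$ small. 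I would present the construction in exactly this order and verify the supersolution inequality region-by-region ($3-\epsilon\le r<3$ versus $1\le r\le 3-\epsilon$), which is the only genuinely delicate point; the rest is bookkeeping with (\ref{jip})–(\ref{jop}).
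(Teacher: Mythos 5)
Your proposal is correct in substance and arrives at the same key decomposition as the paper: after inserting $(\psi_h)_t=-h(\psi_h^q-\psi_h)$ and using $e^{qt}\geq e^{t}$, the supersolution inequality regroups as
\[
V_t-\nu\Delta V+|\nabla V|^{q}\ \geq\ e^{t}\Bigl[\psi_h\bigl(\Phi-\nu\Delta\Phi+|\nabla\Phi|^{q}\bigr)+(\psi_h^{q}-\psi_h)\bigl(|\nabla\Phi|^{q}-h\Phi\bigr)\Bigr],
\]
so that one only needs (\ref{sups}) together with $h\leq|\nabla\Phi|^{q}/\Phi$ on the annulus; these are exactly the paper's two conditions, and your final list of sufficient conditions is equivalent. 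The genuine difference is the barrier: the paper takes $\Phi=\gamma\varphi_1^{-\sigma}+K$ with $\varphi_1$ the first Dirichlet eigenfunction of $B_3$ (the blow-up at $\partial B_3$ then comes from the H\"opf lemma, and $\min_{[1,3]}|\nabla\Phi|^{q}/\Phi>0$ is read off from monotonicity), whereas you take the more elementary and explicit $\Phi=A+B(3-r)^{-\beta}$; both blow up like a power of the distance to $\partial B_3$ and both work, with the same constraint $\beta>(2-q)/(q-1)$ for $q<2$. Two repairable slips. First, the displayed chain in your third step, $e^{t}\psi_h(-\nu\Delta\Phi)+e^{t}\psi_h^{q}|\nabla\Phi|^{q}-he^{t}\Phi\psi_h^{q}\geq e^{t}\psi_h^{q}(|\nabla\Phi|^{q}-h\Phi)$, implicitly uses $-\Delta\Phi\geq0$, which is false for your convex increasing $\Phi$; the correct regrouping is the one displayed above, and with it you do not need the strengthened condition $-\nu\Delta\Phi+|\nabla\Phi|^{q}\geq0$ at all. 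Second, if you do keep that strengthened condition on $\{1\leq r\leq 3-\epsilon\}$, taking $A$ large does nothing, since $A$ appears in neither term; you need $B$ large there, using $q>1$ so that $|\nabla\Phi|^{q}\sim B^{q}$ beats $|\nu\Delta\Phi|\sim B$. (A cosmetic point: $(3-|x|)^{-\beta}$ is not differentiable at the origin, so either smooth it near $0$ or observe that only the annulus is relevant.) Your verification of the blow-up as $t\to0$ and as $x\to\partial B_3$ is fine.
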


\begin{proof}
We first construct $\Phi.$ Let $\sigma>0,$ such that $\sigma\geqq
a=(2-q)/(q-1).$ Let $\varphi_{1}$ be the first eigenfunction of the Laplacian
in $B_{3}$ such that $\varphi_{1}(0)=1,$ associated to the first eigenvalue
$\lambda_{1}$, hence $\varphi_{1}$ is radial ; let $m_{1}=\min_{\overline
{B_{1}}}\varphi_{1}>0$ and $M_{1}=\min_{\overline{B_{3}}\backslash B_{1}%
}\left\vert \nabla\varphi_{1}\right\vert .$ Let us take $\Phi=\Phi_{K}%
=\Phi_{0}+K,$ where $\Phi_{0}=\gamma\varphi_{1}^{-\sigma},$ $K>0$ and
$\gamma>0$ are parameters Then
\[
-\nu\Delta\Phi+\Phi+\left\vert \nabla\Phi\right\vert ^{q}=F(\Phi
_{0})+K,\text{\qquad with }%
\]%
\[
F(\Phi_{0})=\gamma\varphi_{1}^{-(\sigma+2)}(\gamma^{q-1}\sigma^{q}\varphi
_{1}^{(q-1)(a-\sigma)}\left\vert \varphi_{1}^{\prime}\right\vert ^{q}%
+(1-\nu\sigma\lambda_{1})\varphi_{1}^{2}-\nu\sigma(\sigma+1)\varphi
_{1}^{\prime2}).
\]
There holds $\lim_{r\rightarrow3}\left\vert \varphi_{1}^{\prime}\right\vert
=c_{1}>0$ from the H\"{o}pf Lemma. Taking $\sigma>a$ we fix $\gamma=1,$ and
then $\lim_{r\rightarrow3}F(\Phi_{0})=\infty.$ If $q<2$ we can also take
$\sigma=a,$ we get
\[
F(\Phi_{0})=\gamma\varphi_{1}^{-q\prime}(\gamma^{q-1}a^{q}\left\vert
\varphi_{1}^{\prime}\right\vert ^{q}+(1-\nu a\lambda_{1})\varphi_{1}%
^{2}-aq^{\prime}\varphi_{1}^{\prime2}),
\]
hence fixing $\gamma>\gamma(N,q,\nu)$ large enough, we still get
$\lim_{r\rightarrow3}F(\Phi_{0})=\infty.$ Thus $F$ has a minimum $\mu$ in
$B_{3}.$ Taking $K=K(N,q,\nu)>\left\vert \mu\right\vert $ we deduce that
$\Phi$ satisfies (\ref{sups}), and $\lim_{r\rightarrow3}\Phi=\infty.$

Observe that $\Phi^{\prime q}/\Phi=\gamma^{q}\sigma^{q}/(\gamma\varphi
_{1}^{q+\sigma(q-1)}+K\varphi_{1}^{q(\sigma+1)})$ is increasing, then
$m_{K}=m_{K}(N,q,\nu)=\min_{\left[  1,3\right]  }\left\vert \Phi^{\prime
}\right\vert ^{q}/\Phi=\left\vert \Phi^{\prime}(1)\right\vert ^{q}/\Phi(1)>0.$
We define $V$ by (\ref{dev}) and compute%
\begin{align*}
V_{t}-\nu\Delta V+\left\vert \nabla V\right\vert ^{q}  &  =e^{t}(\Phi\psi
_{h}+\Phi(\psi_{h})_{t}-\nu\Delta\Phi)+e^{qt}\left\vert \nabla\Phi\right\vert
^{q}\psi_{h}^{q}\\
&  \geqq e^{t}(\Phi\psi_{h}+\Phi\psi_{t}-\nu\Delta\Phi+\left\vert \nabla
\Phi\right\vert ^{q}\psi^{q})=e^{t}(\psi^{q}-\psi_{h})(\left\vert \nabla
\Phi\right\vert ^{q}-h\Phi).
\end{align*}
We take $h=h(N,q,\nu)<m_{K}.$ Then on $B_{3}\backslash B_{1}$ we have
$\left\vert \nabla\Phi\right\vert ^{q}-h\Phi>0,$ and $\psi^{q}\geqq\psi_{h},$
then $V$ is a supersolution on $B_{3}\backslash B_{1}.$ Moreover $V$ is radial
and increasing with respect to $\left\vert x\right\vert ,$ then
\begin{align}
\sup_{\overline{B_{2}}}V(x,t)  &  =\sup_{\overline{\partial B_{2}}%
}V(x,t)=e^{t}\Phi(2)\psi_{h}(t)\leqq2^{\frac{1}{q-1}}e^{t}\Phi
(2)(1+((q-1)ht)^{-\frac{1}{q-1}})\nonumber\\
&  \leqq C(N,q,\nu)e^{t}\Phi(2)(1+t^{-\frac{1}{q-1}}). \label{ahh}%
\end{align}

\end{proof}

\begin{theorem}
\label{nice}Let $u$ be a classical solution, (in particular any weak solution
if $q\leqq2)$ of equation (\ref{un}) in $Q_{\mathbb{R}^{N},T}$ . Assume that
there exists a ball $B(x_{0},2\eta)$ such that $u$ admits a trace $u_{0}%
\in\mathcal{M}^{+}(B(x_{0},2\eta)).$

(i) Then for any $\tau\in\left(  0,T\right)  ,$ and $t\leqq\tau,$%
\begin{equation}
u(x,t)\leqq C(t^{-\frac{1}{q-1}}\left\vert x-x_{0}\right\vert ^{q^{\prime}%
}+t^{-\frac{N}{2}}(t+\int_{B(x_{0},\eta)}du_{0})),\qquad C=C(N,q,\nu,\eta
,\tau), \label{bac}%
\end{equation}
(ii) Also if $u\in C(\left[  0,T\right)  ;L_{loc}^{R}(B(x_{0},2\eta))),$
\begin{equation}
u(x,t)\leqq C(t^{-\frac{1}{q-1}}\left\vert x-x_{0}\right\vert ^{q^{\prime}%
}+t^{-\frac{N}{2R}}(t+\left\Vert u_{0}\right\Vert _{L^{R}(B(x_{0},\eta
))})),\qquad C=C(N,q,\nu,R,\eta,\tau), \label{bec}%
\end{equation}
if $u\in C(\left[  0,T\right)  \times B(x_{0},2\eta)),$ then
\begin{equation}
u(x,t)\leqq C(t^{-\frac{1}{q-1}}\left\vert x-x_{0}\right\vert ^{q^{\prime}%
}+t+\sup_{B(x_{0},\eta)}u_{0}),\qquad C=C(N,q,\nu,\eta,\tau). \label{bic}%
\end{equation}

\end{theorem}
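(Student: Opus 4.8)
The plan is to prove Theorem \ref{nice} by combining the local regularizing estimate of Theorem \ref{local} near the trace ball with the supersolution $V$ built in Proposition \ref{plus}, moved around by translation and scaling so as to dominate $u$ away from that ball. After translating we may assume $x_{0}=0$. The function $V$ of Proposition \ref{plus} is a supersolution of (\ref{un}) on the annulus $Q_{B_{3}\setminus\overline{B_{1}},\infty}$, it blows up as $t\to0$ uniformly on $B_{3}$ and as $|x|\to 3$ uniformly on compact time intervals, while on the intermediate sphere $\partial B_{2}$ it is bounded above by $C(N,q,\nu)e^{t}\Phi(2)(1+t^{-1/(q-1)})$ by (\ref{ahh}). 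The idea is: on a small annulus around any far-away point $y$, compare $u$ with a translated/dilated copy of $V$; blow-up of $V$ on the inner and temporal boundaries makes the comparison automatic there, and the only genuine input is an upper bound for $u$ on the ``middle sphere'', which is provided precisely by Theorem \ref{local} once we are a fixed distance from the origin — or, for points far from the origin, by an iteration that walks outward sphere by sphere.

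Concretely I would fix the radius $R_{0}=2\eta$ of the trace ball (rescaled to, say, radius $3$ by the obvious dilation of (\ref{un}), which preserves the structure of the equation up to changing $\nu$ and $T$ in a controlled way — I would instead keep $\eta$ and rescale $V$). For a point $y$ with $|y|$ comparable to $\eta$, center the annulus $B_{3\rho}(y)\setminus\overline{B_{\rho}(y)}$ at $y$ with $\rho\sim\eta$ chosen so that $B_{\rho}(y)$ meets $B_{\eta}(0)$; on the middle sphere $\partial B_{2\rho}(y)$ Theorem \ref{local} gives $u(x,t)\le Ct^{-N/2}(t+\int_{B(x_{0},\eta)}du_{0})$ (resp. the $L^{R}$ or $L^{\infty}$ versions), so a constant multiple of the rescaled $V$, which on that sphere is $\ge c\,e^{t}(1+t^{-1/(q-1)})$, dominates $u$ there once the constant is chosen large. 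Since $V\to\infty$ on the inner boundary $\partial B_{\rho}(y)$ and as $t\to 0$, the parabolic comparison principle (which applies because $u$ is a classical subsolution — or a weak one when $q\le 2$, using the regularity of Theorem \ref{gul} and the weak comparison available for such equations) yields $u\le (\text{const})\cdot V_{\text{scaled}}$ on the outer annulus, hence in particular $u(y,t)\le C(t^{-1/(q-1)}+t^{-N/2}(t+\int du_{0}))$ at $y$; and since $V$ grows at most like $t^{-1/(q-1)}|x-y|^{q'}$ away from $y$ (because $\Phi$ is comparable to a constant times $\varphi_{1}^{-\sigma}$, which is bounded on $B_{2}$ and the outer growth is absorbed into the $|x|^{q'}$ term via the scaling $\rho\sim|x|$), one recovers the stated estimate with the $t^{-1/(q-1)}|x-x_{0}|^{q'}$ term.

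For points with $|y|\gg\eta$ the same annulus comparison is run with $\rho\sim|y|$, but now the bound on the middle sphere cannot come directly from Theorem \ref{local}; instead I would use a dyadic iteration, controlling $\sup_{\partial B_{2^{k+1}\eta}}u(\cdot,t)$ in terms of $\sup_{\partial B_{2^{k}\eta}}u(\cdot,t/2)$ (or at the same time with a shifted time), each step costing a geometric factor in $|y|^{q'}$ and a power of $t$; summing the geometric series gives the $C(q)t^{-1/(q-1)}|x-x_{0}|^{q'}$ growth with the additive remainder coming from the base case $k=0$ handled as in the previous paragraph. A cleaner alternative, which I would try first, is to choose the dilation factor of $V$ to depend on $|y|$: replace $V(x,t)$ by $|y|^{q'}V((x-y)/|y|+\text{center}, \cdot)$ type scaling — since under $x\mapsto\lambda x$, $t\mapsto\lambda^{2}t$ the equation (\ref{un}) is invariant after multiplying $u$ by $\lambda^{q'}$ (as $q'=q/(q-1)$ is exactly the scaling exponent), one single rescaled supersolution, with scale $\lambda=|y|$ and height $\lambda^{q'}$, dominates $u$ on an annulus of size $|y|$ around $y$, and its value at $y$ is $|y|^{q'}$ times the value of $V$ at a fixed interior point at the rescaled time $t/|y|^{2}$, i.e.\ $\le C|y|^{q'}(|y|^{2}/t)^{1/(q-1)}\cdot(\dots)$ — and here I must be careful that the exponents match: $q'+2/(q-1)\cdot(\text{something})$ should collapse to exactly $t^{-1/(q-1)}|y|^{q'}$, which it does because $V$'s temporal blow-up is $t^{-1/(q-1)}$ and this is scale-invariant in the right combination.

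The main obstacle I anticipate is the comparison principle itself in the weak case $q\le 2$: one needs that a weak (sub)solution of (\ref{un}) lies below a classical supersolution that blows up on the parabolic boundary, without any a priori growth control on $u$ at spatial infinity. I would handle this by working on bounded annular cylinders $Q_{B_{3\rho}(y)\setminus\overline{B_{\rho}(y)},\varepsilon,\tau}$ where all boundary data are finite, using the regularity of Theorem \ref{gul} (so $u$ is genuinely $C^{2,1}$ inside, making it a classical subsolution there) together with the fact that on the inner boundary and at the initial time $t=\varepsilon$ one can make $V\ge u$ by choosing $\varepsilon$ small and exploiting $V(\cdot,\varepsilon)\to\infty$ and $V\to\infty$ on $\partial B_{\rho}(y)$, then letting $\varepsilon\to 0$; local boundedness of $u$ for $t>0$ (Remark \ref{subreg}) is what makes the middle-sphere bound finite to begin with. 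The bookkeeping of constants through the dyadic iteration (to get $C(q)$ in front of $|x-x_{0}|^{q'}$ but $C(N,q,\nu,\eta,\tau)$ in the remainder, as stated) is the other place requiring care, and is where the single-rescaled-supersolution approach, if it works cleanly, would save the most effort.
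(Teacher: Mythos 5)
Your proposal assembles the right ingredients (the supersolution $V$ of Proposition \ref{plus}, a comparison principle on annuli, a dyadic iteration to produce the $|x-x_{0}|^{q^{\prime}}$ growth, and Theorem \ref{local} for the base case), but the key comparison step is described with the geometry inverted, and as written it would fail. The function $V$ blows up as $x\to\partial B_{3}$ (the \emph{outer} sphere) and as $t\to 0$; it is finite on the inner sphere $\partial B_{1}$. Hence on the annulus $B_{3\rho}\setminus\overline{B_{\rho}}$ the parts of the parabolic boundary handled ``for free'' are the outer sphere and the initial slice, and the genuine input the comparison principle requires is an upper bound for $u$ on the \emph{inner} sphere $\partial B_{\rho}$ — not on the middle sphere. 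The middle ball $\overline{B_{2\rho}}$ is where one \emph{reads off} the conclusion, because $V$ is explicitly bounded there by (\ref{ahh}); it is interior to the annulus and plays no role as boundary data. Because of this inversion, your annuli centered at a far-away point $y$ with $B_{\rho}(y)$ merely meeting $B_{\eta}(x_{0})$ cannot be closed: you would need to dominate $u$ on all of $\partial B_{\rho}(y)$, most of which lies outside the trace ball where Theorem \ref{local} gives nothing. The correct configuration — which the paper uses, and which your dyadic-iteration paragraph gestures at — is to center every annulus at $x_{0}$, take $w=\sup_{Q_{B_{2^{n}\rho},\tau}}u+V_{2^{n}\rho}$ so that the inner boundary datum at step $n$ is the sup already controlled at step $n-1$, conclude $\sup_{Q_{B_{2^{n+1}\rho},\tau}}u\leqq\sup_{Q_{B_{2^{n}\rho},\tau}}u+C2^{nq^{\prime}}\rho^{q^{\prime}}e^{\tau/\rho^{2}}(\rho^{-2/(q-1)}+\tau^{-1/(q-1)})$, and sum the geometric series; the base case $\sup_{Q_{B_{\eta/2},\tau}}u$ is then controlled by Lemma \ref{cor} and Theorem \ref{local} after a time shift $t\mapsto t+\epsilon$, $\epsilon=t/2$, to dispense with continuity of $u$ up to $t=0$.

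Two further corrections. First, the scaling exponent of (\ref{un}) is $a=(2-q)/(q-1)$, not $q^{\prime}$: the invariant rescaling is $V_{\rho}(x,t)=\rho^{-a}V(\rho^{-1}x,\rho^{-2}t)$, and the $\rho^{q^{\prime}}$ in the final bound is not the scaling weight itself but the combination $\rho^{-a}\cdot(\rho^{-2}t)^{-1/(q-1)}=\rho^{q^{\prime}}t^{-1/(q-1)}$ coming from the temporal blow-up $\psi_{h}(t)\sim t^{-1/(q-1)}$ in (\ref{jop}). This arithmetic slip undermines the ``single rescaled supersolution'' shortcut as you state it. Second, in Theorem \ref{nice} the constant multiplying $|x-x_{0}|^{q^{\prime}}$ does depend on $(N,q,\nu,\eta,\tau)$ (indeed on $e^{\tau/\rho^{2}}$ through the iteration); the sharper $C(q)t^{-1/(q-1)}|x-x_{0}|^{q^{\prime}}$ of (\ref{esta}) is obtained only afterwards, in Theorem \ref{top}, from the gradient bound (\ref{versa}) — so you should not try to extract it from the iteration here.
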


\begin{proof}
We use the function $V$ constructed above. We can assume $x_{0}=0$. For any
$\rho>0,$ we consider the function $V_{\rho}$ defined in $B_{3\rho}%
\times(0,\infty)$ by
\[
V_{\rho}(x,t)=\rho^{-a}V(\rho^{-1}x,\rho^{-2}t).
\]
It is a supersolution of the equation (\ref{un}) on $B_{3\rho}\backslash
\overline{B_{\rho}}\times(0,\infty),$ infinite on $\partial B_{3\rho}%
\times(0,\infty)$ and on $B_{3\rho}\times\left\{  0\right\}  ,$ and from
(\ref{ahh})
\begin{align}
\sup_{\overline{B_{2\rho}}}V_{\rho}(x,t)  &  =\sup_{\partial B_{2\rho}}%
V_{\rho}(x,t)\leqq C_{1}(N,q,\nu)\rho^{-a}e^{\frac{t}{\rho^{2}}}\Phi
(2)(1+\rho^{\frac{2}{q-1}}t^{-\frac{1}{q-1}})\nonumber\\
&  \leqq C_{2}(N,q,\nu)\rho^{q^{\prime}}e^{\frac{t}{\rho^{2}}}(\rho^{-\frac
{2}{q-1}}+t^{-\frac{1}{q-1}}). \label{alpa}%
\end{align}

(i) First suppose that $u\in C(\left[  0,T\right)  \times\mathbb{R}^{N})).$
Let $\tau\in\left(  0,T\right)  ,$ and $C(\tau)=$ $\sup_{Q_{B_{\rho},\tau}}u$.
Then $w=C(\tau)+V_{\rho}$ is a supersolution in $Q=(B_{3\rho}\backslash
\overline{B_{\rho}})\times\left(  0,\tau\right]  ,$ and from the comparison
principle we obtain $u\leqq C(\tau)+V_{\rho}$ in that set. Indeed let
$\epsilon>0$ small enough. Then there exists $\tau_{\epsilon}<\epsilon$ and
$r_{\epsilon}\in\left(  3\rho-\epsilon,3\rho\right)  $, such that
$w(.,s)\geqq\max_{\overline{B_{3\rho}}}u(.,\epsilon)$ for any $s\in\left(
0,\tau_{\epsilon}\right]  $, and $w(x,t)\geqq\max_{\overline{B_{3\rho}}%
\times\left[  0,\tau\right]  }u$ for any $t\in\left(  0,\tau\right]  $ and
$r_{\epsilon}\leqq\left\vert x\right\vert <3\rho.$ We compare $u(x,t+\epsilon
)$ with $w(x,t+s)$ on $\left[  0,\tau-\epsilon\right]  \times\overline
{B_{r_{\epsilon}}}\backslash\overline{B_{\rho}}.$ And for $\left\vert
x\right\vert =\rho,$ we have $u(x,t+\epsilon)\leqq C(\tau)\leqq w(x,t+s)$.
Then $u(.,t+\epsilon)\leqq w(.,t+s)$ in $\overline{B_{r_{\epsilon}}}%
\backslash\overline{B_{\rho}}\times\left(  0,\tau-\epsilon\right]  $. As
$s,\epsilon\rightarrow0,$ we deduce that $u\leqq w$ in $Q$.

Hence in $\overline{B_{2\rho}}\times(0,\tau),$ we find from (\ref{alpa})
\begin{equation}
u\leqq C(\tau)+\sup_{\overline{B_{2\rho}}}V_{\rho}(x,t)\leqq C(\tau)+C_{2}%
\rho^{q^{\prime}}e^{\frac{t}{\rho^{2}}}(\rho^{-\frac{2}{q-1}}+t^{-\frac
{1}{q-1}}). \label{sho}%
\end{equation}
Making $t$ tend to $\tau,$ this proves that
\[
\sup_{Q_{B_{2\rho},\tau}}u\leqq\sup_{Q_{B_{\rho},\tau}}u+C_{2}\rho^{q^{\prime
}}e^{\frac{\tau}{\rho^{2}}}(\rho^{-\frac{2}{q-1}}+\tau^{-\frac{1}{q-1}})
\]
By induction, we get%
\begin{align*}
\sup_{Q_{B_{2^{n+1}\rho}},\tau}u  &  \leqq\sup_{Q_{B_{2^{n}\rho}},\tau}%
u+C_{2}2^{nq^{\prime}}\rho^{q^{\prime}}e^{\frac{\tau}{4^{n}\rho^{2}}}%
((2^{n}\rho)^{-\frac{2}{q-1}}+\tau^{-\frac{1}{q-1}})\\
&  \leqq\sup_{Q_{B_{2^{n}\rho}},\tau}u+C_{2}2^{nq^{\prime}}\rho^{q^{\prime}%
}e^{\frac{\tau}{\rho^{2}}}(\rho^{-\frac{2}{q-1}}+\tau^{-\frac{1}{q-1}});
\end{align*}%
\begin{align*}
\sup_{Q_{B_{2^{n+1}\rho}},\tau}u  &  \leqq\sup_{Q_{B_{\rho}}}u+C_{2}%
(1+2^{q^{\prime}}+..+2^{nq^{\prime}})\rho^{q^{\prime}}e^{\frac{\tau}{\rho^{2}%
}}(\rho^{-\frac{2}{q-1}}+\tau^{-\frac{1}{q-1}})\\
&  \leqq\sup_{Q_{B_{\rho},\tau}}u+C_{2}2^{q^{\prime}}(2^{n}\rho)^{q^{\prime}%
}e^{\frac{\tau}{\rho^{2}}}(\rho^{-\frac{2}{q-1}}+\tau^{-\frac{1}{q-1}}).
\end{align*}
For any $x\in\mathbb{R}^{N}$ such that $\left\vert x\right\vert \geqq\rho,$
there exists $n\in\mathbb{N}^{\ast}$ such that $x\in B_{2^{n+1}\rho}%
\backslash\overline{B_{2^{n}\rho}},$ then
\begin{equation}
u(x,\tau)\leqq\sup_{Q_{B_{\rho}},\tau}u+C_{2}2^{q^{\prime}}\left\vert
x\right\vert ^{q^{\prime}}e^{\frac{\tau}{\rho^{2}}}(\rho^{-\frac{2}{q-1}}%
+\tau^{-\frac{1}{q-1}}) \label{moc}%
\end{equation}
thus
\begin{equation}
\sup_{Q_{\mathbb{R}^{N}},\tau}u\leqq\sup_{Q_{B_{\rho}},\tau}u+C_{2}%
2^{q^{\prime}}\left\vert x\right\vert ^{q^{\prime}}e^{\frac{\tau}{\rho^{2}}%
}(\rho^{-\frac{2}{q-1}}+\tau^{-\frac{1}{q-1}}). \label{mic}%
\end{equation}
\bigskip

(ii) Next we consider any classical solution $u$ in $Q_{\mathbb{R}^{N},T}$
with trace $u_{0}$ in $B(x_{0},2\eta)$. We still assume $x_{0}=0.$ Then for
$0<\epsilon\leqq t\leqq\tau,$ from (\ref{locinf}) in Lemma \ref{cor}, there
holds%
\[
\sup_{B_{\eta}/2}u(x,t)\leqq C(N,q)\eta^{-q^{\prime}}t+\sup_{B_{\eta}%
}u(x,\epsilon).
\]
Then from (\ref{mic}) with $\rho=\eta/2$, we deduce that for any $(x,t)\in
Q_{\mathbb{R}^{N},\epsilon,\tau},$
\[
u(x,t)\leqq C(N,q)\eta^{-q^{\prime}}t+\sup_{B_{\eta/2}}u(.,\epsilon
)+C(1+(t-\epsilon)^{-\frac{1}{q-1}})\left\vert x\right\vert ^{q^{\prime}},
\]
with $C=C(N,q,\nu,\eta,\tau).$ Next we take $\epsilon=t/2$. Then for any
$t\in\left(  0,\tau\right]  ,$ from (\ref{locma}) in Theorem \ref{local},
\[
u(x,t)\leqq C(N,q,\eta)t+Ct^{-1(q-1)}\left\vert x\right\vert ^{q^{\prime}%
}+Ct^{-\frac{N}{2}}(t+\int_{B_{\eta}}du_{0}).
\]
with $C=C(N,q,\nu,\eta,\tau)$ and we obtain (\ref{bac}). And (\ref{bec}),
(\ref{bic}) follow from (\ref{locmo}) and (\ref{locinf}).\medskip
\end{proof}

Next we show our main Theorem \ref{fund}. We use a \textit{local} Bernstein
technique, as in \cite{SoZh}. The idea is to compute the equation satisfied by
the function $v=u^{(q-1)/q}$ , introduced in \cite{BeLa99}, and the equation
satisfied by $w=\left\vert \nabla v\right\vert ^{2},$ to obtain estimates of
$w$ in a cylinder $Q_{B_{M},T},$ $M>0.$ The difficulty is that this equation
involves an elliptic operator $w\mapsto w_{t}-\Delta w+b.\nabla w,$ where $b$
depends on $v,$ and may be unbounded. However it can be controlled by the
estimates of $v$ obtained at Theorem \ref{nice}. Then as $M\rightarrow\infty,$
we can prove nonuniversal $L^{\infty}$ estimates of $w.$ Finally we obtain
universal estimates of $w$ by application of the maximum principle in
$Q_{\mathbb{R}^{N},T},$ valid because $w$ is bounded$.\medskip$ First we give
a slight improvement of a comparison principle shown in \cite[Proposition
2.2]{SoZh}.

\begin{lemma}
\label{prin} Let $\Omega$ be any domain of $\mathbb{R}^{N},$ and $\tau
,\kappa\in(0,\infty)$, $A,B\in\mathbb{R}.$ Let $U\in C(\left[  0,\tau\right)
;L_{loc}^{2}(\overline{\Omega}))$ such that $U_{t},\nabla u,D^{2}u\in
L_{loc}^{2}(\overline{\Omega}\times\left(  0,\tau\right)  ),$ $\mathrm{ess}%
\sup_{Q_{\Omega,\tau}}U<\infty,$ $U\leqq B$ on the parabolic boundary of
$Q_{\Omega,\tau},$ and
\[
U_{t}-\Delta U\leqq\kappa(1+\left\vert x\right\vert )\left\vert \nabla
U\right\vert +f\qquad\text{in }Q_{\Omega,\tau}%
\]
where $f=f(x,t)$ such that $f(.,t)\in L_{loc}^{2}(\overline{\Omega})$ for
$a.e.$ $t\in(0,\tau)$ and $f\leqq0$ on $\left\{  (x,t)\in Q_{\Omega,\tau
}:U(x,t)\geqq A\right\}  .$ Then ess$\sup_{Q_{\Omega,\tau}}U\leqq\max(A,B).$
\end{lemma}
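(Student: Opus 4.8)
The statement is a maximum-principle-type comparison lemma, and the plan is to reduce to the classical case by a standard change of unknown that kills the unbounded drift $\kappa(1+|x|)|\nabla U|$. First I would introduce an auxiliary smooth function $\phi(x)=\sqrt{1+|x|^2}$ (or any smooth comparison weight with $|\nabla\phi|\le 1$ and $|\Delta\phi|\le C(N)$), so that $1+|x|\le C\phi$, and consider for $\lambda>0$ large the function $Z=U-\lambda t-\mu\,g(\phi)$ for a convex increasing $g$ to be chosen; the point is that $g(\phi)$ should be a strict supersolution of the operator $v\mapsto v_t-\Delta v-\kappa(1+|x|)|\nabla v|$ on the region where $U\ge A$, so that $Z$ satisfies a differential inequality with strictly negative right-hand side there and cannot attain a positive interior maximum. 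The natural choice, exactly as in \cite[Proposition 2.2]{SoZh}, is $g(\phi)=e^{\beta\phi}$ with $\beta=\beta(\kappa,N)$ large, since then $\Delta(e^{\beta\phi})+\kappa(1+|x|)|\nabla(e^{\beta\phi})|\le e^{\beta\phi}(\beta^2+C\beta+C\kappa\beta\phi)$, which is dominated by a multiple of $\phi\,e^{\beta\phi}$; one then also subtracts a term growing in $t$ to absorb this.

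\textbf{Key steps in order.} (1) Fix $\varepsilon>0$ and set $U_\varepsilon=U-\varepsilon\,G(x,t)$ where $G(x,t)=e^{\beta\sqrt{1+|x|^2}}(1+t)$ say, with $\beta$ chosen so that $G_t-\Delta G-\kappa(1+|x|)|\nabla G|\ge c_\varepsilon>0$ pointwise (using $\mathrm{ess}\sup U<\infty$ is not yet needed here, only the elementary estimate on $G$). (2) Observe that because $\mathrm{ess}\sup_{Q_{\Omega,\tau}}U<\infty$ and $G\to\infty$ as $|x|\to\infty$, we have $U_\varepsilon\to-\infty$ as $|x|\to\infty$ uniformly in $t$, and $U_\varepsilon\le B$ on the lateral and initial parts of the parabolic boundary; hence $M_\varepsilon:=\mathrm{ess}\sup_{Q_{\Omega,\tau}}U_\varepsilon$ is attained (in the relevant sense) at an interior point or is $\le\max(A,B)$ already — the standard argument is to suppose $M_\varepsilon>\max(A,B)$ and derive a contradiction. (3) On the (open) set $\{U>A\}\supset\{U_\varepsilon>A\}$ we have $f\le 0$, so where $U_\varepsilon$ is large, $U_\varepsilon$ satisfies $(U_\varepsilon)_t-\Delta U_\varepsilon\le \kappa(1+|x|)|\nabla U_\varepsilon|+\kappa(1+|x|)|\nabla G|\varepsilon+f-\varepsilon(G_t-\Delta G)\le \kappa(1+|x|)|\nabla U_\varepsilon|-\varepsilon c_\varepsilon/2<\kappa(1+|x|)|\nabla U_\varepsilon|$ strictly. (4) Apply the classical weak maximum principle / a De Giorgi-type or Stampacchia truncation argument (the Sobolev regularity hypotheses $U_t,\nabla U,D^2U\in L^2_{loc}$ are exactly what is needed to test against $(U_\varepsilon-k)_+$) on a large ball $B_\rho$, using that on $\partial B_\rho$ one has $U_\varepsilon\le\max(A,B)$ for $\rho$ large: this forces $M_\varepsilon\le\max(A,B)$, a contradiction. (5) Let $\varepsilon\to0$: since $G$ is locally bounded, $U_\varepsilon\to U$ locally, giving $\mathrm{ess}\sup_{Q_{\Omega,\tau}}U\le\max(A,B)$.

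\textbf{Main obstacle.} The technical heart is step (4): making the maximum-principle argument rigorous under the weak regularity assumed (only $L^2_{loc}$ second derivatives, ess-sup rather than sup), which requires a Stampacchia-type iteration testing the inequality against $(U_\varepsilon-k)_+\chi$ for suitable cutoffs $\chi$ supported where $U_\varepsilon>A$, controlling the drift term $\kappa(1+|x|)|\nabla U_\varepsilon|$ by Cauchy–Schwarz against the good gradient term $\int|\nabla(U_\varepsilon-k)_+|^2$ — the coefficient $1+|x|$ is harmless on each fixed ball. The ``slight improvement'' over \cite{SoZh} alluded to in the text is presumably that the drift coefficient is allowed to grow linearly in $|x|$ rather than being bounded, and that $f$ is merely $L^2_{loc}$ in $x$ and sign-controlled only on the super-level set $\{U\ge A\}$; both are accommodated precisely by the weight $e^{\beta\sqrt{1+|x|^2}}$ absorbing the linear growth and by localizing the test function to $\{U_\varepsilon>A\}$ so that the sign of $f$ is used exactly where needed. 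I would also remark that one may replace $\sqrt{1+|x|^2}$ by $1+|x|$ away from the origin with a harmless smoothing near $0$, as the precise form is immaterial.
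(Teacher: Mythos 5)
Your overall architecture is the same as the paper's (perturb $U$ by $\varepsilon$ times an auxiliary weight tending to $+\infty$ in $|x|$, use $\mathrm{ess\,sup}\,U<\infty$ to localize to a large ball, exploit the sign of $f$ only on the superlevel set by testing with the positive part, run an energy/Gronwall argument, then let $\varepsilon\to0$). However, the key technical step — the choice of the weight — does not work as you wrote it. You take $G(x,t)=e^{\beta\sqrt{1+|x|^2}}(1+t)$ and claim $G_t-\Delta G-\kappa(1+|x|)|\nabla G|\geq c_\varepsilon>0$ pointwise. This is false: with $\phi=\sqrt{1+|x|^2}$ one has $\kappa(1+|x|)|\nabla G|\approx\kappa\beta|x|\,e^{\beta\phi}(1+t)$ and $\Delta G\approx\beta^2e^{\beta\phi}(1+t)$, while $G_t=e^{\beta\phi}$; the drift term alone already exceeds $G_t$ by an unbounded factor of order $\kappa\beta|x|(1+t)$, and, as you yourself note, the deficit is of order $\phi\,e^{\beta\phi}$, which no "term growing in $t$" (whose time derivative is bounded in $x$) can absorb. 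An exponential weight is exactly the wrong shape for a drift coefficient growing linearly in $|x|$, because its gradient grows with the function.

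The fix — and this is precisely the "slight improvement" over Souplet--Zhang that the lemma is after — is to use a weight whose gradient \emph{decays} like $1/|x|$, so that the linearly growing drift coefficient is neutralized. The paper takes $\varphi(x,t)=\Lambda t+\ln(1+|x|^2)$: then $|\nabla\varphi|=2|x|/(1+|x|^2)$, so $\kappa(1+|x|)|\nabla\varphi|\leq 2\sqrt2\,\kappa$ and $0\leq\Delta\varphi\leq 2N$ are both bounded, and choosing $\Lambda=2\sqrt2\,\kappa+2N$ gives $\varphi_t-\Delta\varphi-\kappa(1+|x|)|\nabla\varphi|\geq0$. Since $U$ is merely assumed bounded above, the slow logarithmic growth of $\varphi$ still forces $Y=U-\max(A,B)-\varepsilon\varphi\leq0$ outside a large ball, after which the paper tests with $Y^+$ (noting $fY^+\leq0$ because $Y^+>0$ forces $U>\max(A,B)\geq A$), absorbs $\kappa(1+R)\int|\nabla Y|Y^+$ into $\int|\nabla Y^+|^2$ by Young's inequality, and concludes by Gronwall. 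If you replace your $G$ by this logarithmic weight, the rest of your outline goes through and coincides with the paper's argument.
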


\begin{proof}
We set $\varphi(x,t)=\Lambda t+\ln(1+\left\vert x\right\vert ^{2}),$
$\Lambda>0.$ Then $\nabla\varphi=2x/(1+\left\vert x\right\vert ^{2}),$
$0\leqq\Delta\varphi\leqq2N/(1+\left\vert x\right\vert ^{2})\leqq2N.$ Let
$\varepsilon>0$ and $Y=U-\max(A,B)-\varepsilon\varphi.$ Taking $\Lambda
=2\sqrt{2}\kappa+2N,$ we obtain
\[
Y_{t}-\Delta Y-f-\kappa(1+\left\vert x\right\vert )\left\vert \nabla
Y\right\vert \leqq\varepsilon(K(1+\left\vert x\right\vert )\left\vert
\nabla\varphi\right\vert -\varphi_{t}+\Delta\varphi)\leqq\varepsilon(2\sqrt
{2}\kappa+2N-\Lambda)=0.
\]
Since ess$\sup_{Q_{\Omega,\tau}}U<\infty,$ for $R$ large enough, and any
$t\in(0,\tau),$ we have $Y(.,t)\leqq0$ $a.e.$ in $\Omega\cap\left\{
\left\vert x\right\vert >R\right\}  .$ And $Y^{+}\in C(\left[  0,\tau\right)
;L^{2}(\Omega))\cap W^{1,2}((0,\tau);L^{2}(\Omega)),$ $Y^{+}(0)=0$ and
$Y^{+}(.,t)\in W^{1,2}(\Omega\cap B_{R})$ for $a.e.$ $t\in(0,\tau),$ and
$fY^{+}(.,t)\leqq0.$ Then
\[
\frac{1}{2}\frac{d}{dt}(\int_{\Omega}Y^{+2}(.,t)\leqq-\int_{\Omega}\left\vert
\nabla Y^{+}(.,t)\right\vert ^{2}+\kappa(1+R)\int_{\Omega}\left\vert \nabla
Y(.,t)\right\vert Y^{+}(.,t)\leqq\frac{\kappa^{2}(1+R)^{2}}{4}\int_{\Omega
}Y^{+2}(.,t),
\]
hence by integration $Y\leqq0$ $a.e.$ in $Q_{\Omega,\tau}.$ We conclude as
$\varepsilon\rightarrow0.$ \medskip
\end{proof}

\begin{proof}
[Proof of Theorem \ref{fund}]We can assume $x_{0}=0.$ By setting
$u(x,t)=\nu^{q^{\prime}/2}U(x/\sqrt{\nu},t),$ for proving (\ref{versi}) we can
suppose that $u$ is a classical solution of (\ref{un}) with $\nu=1.$ We set
\[
\delta+u=v^{\frac{q}{q-1}},\qquad\delta\in\left(  0,1\right)  .
\]

\textbf{(i) Local problem relative to} $\left\vert \nabla v\right\vert ^{2}.$
Here $u$ is any classical solution $u$ of equation (\ref{un}) in a cylinder
$Q_{B_{M},T}$ with $M>0.$ Then $v$ satisfies the equation
\begin{equation}
v_{t}-\Delta v=\frac{1}{q-1}\frac{\left\vert \nabla v\right\vert ^{2}}%
{v}-cv\left\vert \nabla v\right\vert ^{q},\qquad c=(q^{\prime})^{q-1}%
.\label{hav}%
\end{equation}
Setting $w=\left\vert \nabla v\right\vert ^{2},$ we define
\[
\mathcal{L}w=w_{t}-\Delta w+b.\nabla w,\qquad b=(qcvw^{\frac{q-2}{2}}-\frac
{2}{q-1}\frac{1}{v})\nabla v.
\]
Differentiating (\ref{hav}) and using the identity $\Delta w=2\nabla(\Delta
w).\nabla w+2\left\vert D^{2}v\right\vert ^{2},$ we obtain the equation
\begin{equation}
\mathcal{L}w+2cw^{\frac{q+2}{2}}+2\left\vert D^{2}v\right\vert ^{2}+\frac
{2}{q-1}\frac{w^{2}}{v^{2}}=0.\label{eqw}%
\end{equation}
As in \cite{SoZh}, for $s\in\left(  0,1\right)  ,$ we consider a test function
$\zeta\in C^{2}(\overline{B}_{3M/4})$ with values in $\left[  0,1\right]  ,$
$\zeta=0$ for $\left\vert x\right\vert \geq3M/4$ and $\left\vert \nabla
\zeta\right\vert \leqq C(N,s)\zeta^{s}/M$ and $\left\vert \Delta
\zeta\right\vert +\left\vert \nabla\zeta\right\vert ^{2}/\zeta\leqq
C(N,s)\zeta^{s}/M^{2}$ in $B_{3M/4}.$ We set $z=w\zeta.$ We have
\[
\mathcal{L}z=\zeta\mathcal{L}w+w\mathcal{L}\zeta-2\nabla w.\nabla\zeta
\leqq\zeta\mathcal{L}w+w\mathcal{L}\zeta+\left\vert D^{2}v\right\vert
^{2}\zeta+4w\frac{\left\vert \nabla\zeta\right\vert ^{2}}{\zeta}.
\]
It follows that in $Q_{B_{M},T},$
\[
\mathcal{L}z+2cw^{\frac{q+2}{2}}\zeta+\frac{2}{q-1}\frac{w^{2}}{v^{2}}%
\zeta\leqq\frac{C\zeta^{s}w}{M^{2}}+\frac{C\zeta^{s}w^{\frac{3}{2}}}%
{M}\left\vert cqvw^{\frac{q-2}{2}}-\frac{2}{q-1}\frac{1}{v}\right\vert \leqq
C\zeta^{s}(\frac{w}{M^{2}}+\frac{vw^{\frac{q+1}{2}}}{M}+\frac{w^{\frac{3}{2}}%
}{Mv}),
\]
with constants $C=C(N,q,s).$ Since $\zeta\leqq1,$ from the Young inequality,
taking $s\geqq\max(q+1),3)/(q+2),$ for any $\varepsilon>0,$
\[
\frac{C}{M}\zeta^{s}vw^{\frac{q+1}{2}}=\frac{C}{M}\zeta^{\frac{q+1}{q+2}}%
\zeta^{s-\frac{q+1}{q+2}}vw^{\frac{q+1}{2}}\leqq\varepsilon\zeta w^{\frac
{q+2}{2}}+C(N,q,\varepsilon)\frac{v^{q+2}}{M^{q+2}},
\]
and
\[
\frac{C}{M^{2}}\zeta^{s}w\leqq\varepsilon\zeta w^{\frac{q+2}{2}}%
+C(N,q,\varepsilon)\frac{1}{M^{\frac{2(q+2)}{q}}},
\]%
\[
\frac{C}{M}\zeta^{s}\frac{w^{\frac{3}{2}}}{v}\leqq\frac{1}{\delta M}\zeta
^{s}w^{\frac{3}{2}}=\frac{1}{\delta M}\zeta^{s-\frac{3}{q+2}}\zeta^{\frac
{3}{q+2}}w^{\frac{3}{2}}\leqq\varepsilon\zeta w^{\frac{q+2}{2}}%
+C(N,q,\varepsilon)\frac{1}{(\delta M)^{\frac{q+2}{q-1}}}.
\]
Then with a new $C=C(N,q,\delta)$
\begin{equation}
\mathcal{L}z+cz^{\frac{q+2}{2}}\leqq C(\frac{v^{q+2}}{M^{q+2}}+\frac
{1}{M^{\frac{2(q+2)}{q}}}+\frac{1}{M^{\frac{q+2}{q-1}}}).\label{shic}%
\end{equation}
\textbf{(ii) Nonuniversal estimates of }$w.$ Here we assume that $u$ is a
classical solution of (\ref{un}) in whole $Q_{\mathbb{R}^{N},T},$ such that
$u\in C(\mathbb{R}^{N}\times\left[  0,T\right)  )$. From Theorem \ref{nice},
for any $\tau\in(0,T),$ there holds in $Q_{\mathbb{R}^{N},\tau}$
\begin{equation}
v(x,t)=(\delta+u(x,t))^{\frac{q-1}{q}}\leqq C(t^{-\frac{1}{q}}\left\vert
x\right\vert +(t+\sup_{B_{2\eta}}u_{0})^{\frac{q-1}{q}}),\qquad C=C(N,q,\eta
,\tau).\label{jom}%
\end{equation}
hence for $M\geqq M(q,\sup_{B_{2\eta}}u_{0},\tau)\geqq1,$ we deduce
\[
v(x,t)\leqq2Ct^{-\frac{1}{q}}M,\qquad\text{in }Q_{B_{M},\tau}.
\]
Then with a new constant $C=C(N,q,\eta,\tau,\delta),$ there holds in
$Q_{B_{3M/4},\tau}$
\begin{equation}
\mathcal{L}z+cz^{\frac{q+2}{2}}\leqq Ct^{-\frac{q+2}{q}}.\label{glu}%
\end{equation}
Next we consider $\Psi(t)=Kt^{-2/q}.$ It satisfies
\[
\Psi_{t}+c\Psi^{\frac{q+2}{2}}=(cK^{\frac{q+2}{2}}-2q^{-1}K)t^{-\frac{q+2}{q}%
}\geqq Ct^{-\frac{q+2}{q}}%
\]
if $K\geqq\overline{K}=\overline{K}(N,q,\eta,\tau,\delta).$ Fixing
$\epsilon\in\left(  0,T\right)  $ such that $\tau+\epsilon<T,$ there exists
$\tau_{\epsilon}\in\left(  0,\epsilon\right)  $ such that $\Psi(\theta
)\geqq\sup_{B_{M}}z(.,\epsilon)$ for any $\theta\in(0,\tau_{\epsilon}).$ We
have
\begin{align*}
&  z_{t}(.,t+\epsilon)-\Delta z(.,t+\epsilon)+b(.,t+\epsilon).\nabla
(z,t+\epsilon)+cz^{\frac{q+2}{2}}(t+\epsilon)\\
&  \leqq C(t+\epsilon)^{-\frac{q+2}{q}}\leqq C(t+\theta)^{-\frac{q+2}{q}}%
\leqq\Psi_{t}(t+\theta)+c\Psi^{\frac{q+2}{2}}(t+\theta).
\end{align*}
Therefore, setting $\tilde{z}(.,t)=z(.,t+\epsilon)-\Psi(t+\theta),$ there
holds
\[
\tilde{z}(.,t)-\Delta\tilde{z}(.,t)+b(.,t+\epsilon).\nabla\tilde{z}(.,t)\leqq0
\]
on the set $\mathcal{V}=\left\{  (x,t)\in Q_{B_{3M/4},\tau+\epsilon}:\tilde
{z}(x,t)\geqq0\right\}  ;$ otherwise $\tilde{z}(.,t)\leqq0$ for sufficiently
small $t>0,$ and $\tilde{z}\leqq0$ on $\partial B_{3M/4}\times\left[
0,\tau\right]  .$ Then from Lemma \ref{prin}, we get $z(.,t+\epsilon)\leqq
\Psi(t+\theta)$ in $Q_{B_{3M/4},\tau},$ since $\left\vert b\right\vert
\leqq(qcvw^{\frac{q-1}{2}}+\frac{2}{q-1}\frac{1}{\delta}w^{1/2}),$ hence
bounded on $Q_{B_{3M/4},\tau+\epsilon}.$ Going to the limit as $\theta
,\epsilon\rightarrow0,$ we deduce that $z(.,t)\leqq\overline{K}t^{-\frac{2}%
{q}}$ in $Q_{B_{3M/4},\tau},$ thus $w(.,t)\leqq\overline{K}t^{-\frac{2}{q}}$
in $Q_{B_{M/2},\tau}.$ Next we go to the limit as $M\rightarrow\infty$ and
deduce that $w(.,t)\leqq\overline{K}t^{-\frac{2}{q}}$ in $Q_{\mathbb{R}%
^{N},\tau}$ , namely
\[
(q^{\prime})^{q}\left\vert \nabla v(.,t)\right\vert ^{q}=\frac{\left\vert
\nabla u\right\vert ^{q}}{\delta+u}(.,t)\leqq Ct^{-1},\qquad C=C(N,q,\eta
,\delta,\tau).
\]
In turn for any $\epsilon$ as above, \textit{there holds} $w\in L^{\infty
}(Q_{\mathbb{R}^{N},\epsilon,T})$, that means $\left\vert \nabla v\right\vert
\in L^{\infty}(Q_{\mathbb{R}^{N},\epsilon,\tau}).$\medskip

\textbf{(iii) Universal estimate (\ref{versi}) for }$u\in C(\mathbb{R}%
^{N}\times\left[  0,T\right)  ):$ we prove the universal estimate
(\ref{versi}). Taking again $\Psi(t)=Kt^{-2/q},$ with now $K=K(N,q)=q^{-2}%
(q-1)^{2/q^{\prime}},$ we have
\[
\Psi_{t}+2c\Psi^{\frac{q+2}{2}}\geqq(2cK^{\frac{q+2}{2}}-2q^{-1}%
K)t^{-\frac{q+2}{q}}\geqq0.
\]
And $\mathcal{L}w+2cw^{\frac{q+2}{2}}\leqq0$ from (\ref{eqw}). Moreover there
exists $\tau_{\epsilon}\in\left(  0,\tau\right)  $ such that $\Psi
(\theta)\geqq\sup_{\mathbb{R}^{N}}w(.,\epsilon)$ for any $\theta\in
(0,\tau_{\epsilon}).$ Setting $y(.,t)=$ $w(.,t+\epsilon)-\Psi(.,t+\theta),$
hence on the set $\mathcal{U}=\left\{  (x,t)\in Q_{\mathbb{R}^{N},\tau
}:y(x,t)\geqq0\right\}  ,$ there holds in the same way
\[
y(.,t)-\Delta y(.,t)+b(.,t+\epsilon).\nabla y(.,t)\leqq0.
\]
Here we only have from (\ref{jom})
\[
\left\vert b\right\vert \leqq(qcvw^{\frac{q-1}{2}}+\frac{2}{q-1}\frac
{1}{\delta}w^{1/2})\leqq\kappa_{\epsilon}(1+\left\vert x\right\vert )
\]
on $Q_{\mathbb{R}^{N},\epsilon,\tau}$, for some $\kappa_{\epsilon}%
=\kappa_{\epsilon}(N,q,\eta,\sup_{B_{2\eta}}u_{0},\tau,\epsilon).$ It is
sufficient to apply Lemma \ref{prin}. We deduce that $w(.,t+\epsilon)\leqq
\Psi(t+\theta)$ on $(0,\tau).$ As $\theta,\epsilon\rightarrow0$ we obtain that
$w(.,t)\leqq\Psi(t)=q^{-2}(q-1)^{2/q^{\prime}}t^{-2/q},$ which shows now that
in $(0,T)$
\[
\left\vert \nabla v(.,t)\right\vert ^{q}=(q^{\prime})^{-q}\frac{\left\vert
\nabla u\right\vert ^{q}}{\delta+u}(.,t)\leqq q^{-q}(q-1)^{(q-1)}t^{-1}.
\]
As $\delta\rightarrow0,$ we obtain (\ref{versi}). \medskip

\textbf{(iv) General universal estimate.} Here we relax the assumption $u\in
C(\mathbb{R}^{N}\times\left[  0,T\right)  ):$ For any $\epsilon\in\left(
0,T\right)  $ such that $\tau+\epsilon<T,$ we have $u\in C(\mathbb{R}%
^{N}\times\left[  \epsilon,\tau+\epsilon\right)  ),$ then from above,
\[
\left\vert \nabla v(.,t+\epsilon)\right\vert ^{q}\leqq\frac{1}{q-1}\frac{1}%
{t},
\]
and we obtain (\ref{versi}) as $\epsilon\rightarrow0,$ on $(0,\tau)$ for any
$\tau<T,$ hence on $(0,T).$\medskip
\end{proof}

\begin{proof}
[Proof of Theorem \ref{growth}]It is a direct consequence of Theorems
\ref{fund} and \ref{top}.
\end{proof}

\section{Existence and nonuniqueness results \label{sec5}}

First mention some known uniqueness and comparison results, for the Cauchy
problem, see \cite[Theorems 2.1,4.1,4.2 and Remark 2.1 ]{BASoWe},\cite[Theorem
2.3, 4.2, 4.25, Proposition 4.26 ]{BiDao2}, and for the Dirichlet problem, see
\cite[Theorems 3.1, 4.2]{Al}, \cite{BeDa}, \cite[Proposition 5.17]{BiDao2},
\cite{Po}.

\begin{theorem}
\label{souc} Let $\Omega=\mathbb{R}^{N}$ (resp. $\Omega$ bounded). (i) Let
$1<q<q_{\ast},$ and $u_{0}\in\mathcal{M}_{b}(\mathbb{R}^{N})($resp. $u_{0}%
\in\mathcal{M}_{b}(\Omega)$). Then there exists a unique weak solution $u$ of
(\ref{un}) with trace $u_{0}$ (resp. a weak solution of $(D_{\Omega,T}),$ such
that $\lim_{t\rightarrow0}u(.t)=u_{0}$ weakly in $\mathcal{M}_{b}(\Omega))$).
If $v_{0}\in\mathcal{M}_{b}(\Omega)$ and $u_{0}\leqq v_{0},$ and $v$ is the
solution associated to $v_{0},$ then $u\leqq v.$\medskip

(ii) Let $u_{0}\in L^{R}\left(  \Omega\right)  ,$ $1\leqq R\leqq\infty.$ If
$1<q<(N+2R)/(N+R),$ or if $q=2,$ $R<\infty,$ there exists a unique weak
solution $u$ of (\ref{un}) (resp. $(D_{\Omega,T})$) such that $u\in C(\left[
0,T\right)  ;L^{R}\left(  \Omega\right)  $ and $u(0)=u_{0}$. If $v_{0}\in
L^{R}\left(  \mathbb{R}^{N}\right)  $ and $u_{0}\leqq v_{0},$ then $u\leqq v.$
If $u_{0}$ is nonnegative, then for any $1<q\leqq2,$ there still exists at
least a weak nonnegative solution $u$ satisfying the same conditions.\medskip
\end{theorem}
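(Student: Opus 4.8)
The final statement to prove is Theorem~\ref{souc}, a collection of known uniqueness and comparison results quoted from the literature (\cite{BASoWe}, \cite{BiDao2}, \cite{Al}, \cite{BeDa}, \cite{Po}). Since the theorem is explicitly attributed as a recollection of existing results, my plan is not to reprove everything from scratch but to organize the proof as a careful assembly of the cited statements, indicating precisely which reference supplies each clause.

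For part (i), the plan is as follows. First, for the Cauchy problem with $1<q<q_\ast$ and $u_0\in\mathcal M_b(\mathbb R^N)$, existence of a weak solution with trace $u_0$ is obtained by the semigroup / approximation construction of \cite{BASoWe}, and uniqueness in the class of weak solutions with a bounded-measure trace is \cite[Theorem~4.1]{BASoWe}, refined in \cite[Theorem~4.2]{BiDao2}; the comparison assertion $u_0\leqq v_0\Rightarrow u\leqq v$ follows from the same uniqueness argument applied to the difference, or directly from \cite[Proposition~4.26]{BiDao2}. For the Dirichlet problem in a bounded $\Omega$ the parallel statements are \cite{BeDa} and \cite[Proposition~5.17]{BiDao2}, with convergence $u(.,t)\to u_0$ weakly$^\ast$ in $\mathcal M_b(\Omega)$ built into the construction. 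The only point needing a word of care is that the range $q<q_\ast$ is exactly what guarantees solvability for a general bounded measure (recall the Dirac-mass obstruction mentioned in the introduction), so I would state that the subcriticality hypothesis is used precisely at the existence step.

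For part (ii), with $u_0\in L^R(\Omega)$, the plan is: when $1<q<(N+2R)/(N+R)$ or $q=2$ with $R<\infty$, existence and uniqueness of a weak solution in $C([0,T);L^R(\Omega))$ with $u(0)=u_0$ is \cite[Theorems~2.1,4.1]{BASoWe} on $\mathbb R^N$ and \cite[Theorems~3.1,4.2]{Al}, \cite{Po} on bounded $\Omega$; the $L^R$–comparison $u_0\leqq v_0\Rightarrow u\leqq v$ is again obtained from uniqueness. For the last clause — existence of at least one nonnegative weak solution for every $1<q\leqq2$ when $u_0\geqq0$, without the subcriticality restriction — the argument is the standard monotone approximation: truncate $u_0$ by $u_{0,k}=\min(u_0,k)\in L^\infty\cap L^R$, solve $(P)$ or $(D_{\Omega,T})$ for each (using part (i) or the $L^\infty$ theory), obtain a nondecreasing sequence of solutions by comparison, pass to the limit using the local regularity Theorem~\ref{gul} together with the local integral bound Lemma~\ref{cor} and Theorem~\ref{local} to control the limit and identify the trace; this is \cite[Proposition~4.26]{BiDao2} (resp.\ \cite[Proposition~5.17]{BiDao2}).

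The main obstacle — really the only nontrivial point, since the rest is citation — is justifying the limit passage in that last monotone-approximation step: one must know that the increasing limit $u=\lim_k u_k$ is finite, belongs to the weak-solution class, and actually takes $u_0$ as its trace rather than some larger measure. Finiteness and $W^{1,1}_{loc}$-regularity follow from the a priori estimates already available in this paper (Lemma~\ref{cor}, Theorem~\ref{local}, and Theorem~\ref{gul} giving local $C^{2,1}$ compactness once the $u_k$ are locally bounded), while the identification of the trace uses the integral identities (\ref{hou})–(\ref{bou}) of Remark~\ref{trac} together with monotone convergence. I would present these as the steps where the paper's own machinery is invoked, and otherwise defer to the listed references.
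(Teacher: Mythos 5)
Your proposal matches the paper's treatment: Theorem \ref{souc} is stated there without proof, as a recollection of known results, with exactly the references you invoke (\cite{BASoWe}, \cite{BiDao2} for the Cauchy problem and \cite{Al}, \cite{BeDa}, \cite{Po}, \cite{BiDao2} for the Dirichlet problem) supplying each clause. Your added sketch of the monotone-approximation step for the last assertion of (ii) is consistent with how the paper later uses this result (compare the proof of Theorem \ref{exim}), so nothing further is required.
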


Next we prove Theorem \ref{exim}. Our proof of (ii) (iii) is based on
approximations by nonincreasing sequences. Another proof can be obtained when
$u_{0}\in L_{loc}^{1}\left(  \mathbb{R}^{N}\right)  $ and $q\leqq2,$ by
techniques of equiintegrability, see \cite{LePe} for a connected problem.
\medskip

\begin{proof}
[Proof of Theorem \ref{exim}] Assume $\Omega=\mathbb{R}^{N}$ (resp. $\Omega$ bounded).

(i) Case $1<q<q_{\ast}$, $u_{0}\in\mathcal{M}^{+}\left(  \mathbb{R}%
^{N}\right)  $ (resp. $\mathcal{M}^{+}\left(  \Omega\right)  $):  Let
$u_{0,n}=u_{0}\llcorner B_{n}$ (resp. $u_{0,n}=u_{0}\llcorner\overline
{\Omega_{1/n}^{\prime}},$ where $\Omega_{n}=\left\{  x\in\Omega:d(x,\partial
\Omega)>1/n\right\}  $, for $n$ large enough). From Theorem \ref{souc}, there
exists a unique weak solution $u_{n}$ of (\ref{un}) (resp. of $(D_{\Omega,T}%
)$) with trace $u_{0,n},$ and $(u_{n})$ is nondecreasing; and $u_{n}\in
C^{2,1}(Q_{\mathbb{R}^{N},T})$ since $q\leqq2.$ From (\ref{ruc}), (\ref{zif}),
for any $\xi\in C_{c}^{1+}(\Omega),$
\begin{equation}
\int_{\Omega}u_{n}(.,t)\xi^{q^{\prime}}+\frac{1}{2}\int_{0}^{t}\int_{\Omega
}|\nabla u_{n}|^{q}\xi^{q^{\prime}}\leqq Ct\int_{\Omega}|\nabla\xi
|^{q^{\prime}}+\int_{\Omega}\xi^{q^{\prime}}du_{0}.\label{pic}%
\end{equation}
Hence $\left(  u_{n}\right)  $\textbf{ }is bounded in $L_{loc}^{\infty}\left(
\left[  0,T\right)  ;L_{loc}^{1}(\Omega)\right)  ,$ and $\left(  \left\vert
\nabla u_{n}\right\vert ^{q}\right)  $ is bounded in $L_{loc}^{1}\left(
\left[  0,T\right)  ;L_{loc}^{1}(\Omega)\right)  .$ In turn $\left(
u_{n}\right)  $\textbf{ }is bounded in $L_{loc}^{\infty}\left(  (0,T);L_{loc}%
^{\infty}(\Omega)\right)  ,$ from Theorem \ref{local}. From Theorem \ref{gul},
up to a subsequence, $(u_{n})$ converges in $C_{loc}^{2,1}(Q_{\mathbb{R}%
^{N},T})$ (resp. $C_{loc}^{2,1}(Q_{\Omega,T})\cap C^{1,0}\left(
\overline{\Omega}\times\left(  0,T\right)  \right)  $) to a weak solution $u$
of (\ref{un}) in $Q_{\mathbb{R}^{N},T}$ (resp. of $(D_{\Omega,T})$). Also from
\cite[Lemma 3.3]{BaPi}, for any $k\in\left[  1,q^{\ast}\right)  $ and any
$0<s<\tau<T,$
\[
\left\Vert u_{n}\right\Vert _{L^{k}((s,\tau);W^{1,k}(\omega))}\leqq
C(k,\omega)(\left\Vert u_{n}(s,.)\right\Vert _{L^{1}(\omega)}+\left\Vert
|\nabla u_{n}|^{q}+|\nabla u_{n}|+u_{n}\right\Vert _{L^{1}(Q_{\omega,s,\tau}%
)}),\qquad\forall\omega\subset\subset\Omega
\]
\[
\text{(resp. }\left\Vert u_{n}\right\Vert _{L^{k}((s,\tau);W_{0}^{1,k}%
(\Omega))}\leqq C(k,\Omega)(\left\Vert u_{n}(.,s)\right\Vert _{L^{1}(\Omega
)}+\left\Vert \left\vert \nabla u_{n}\right\vert ^{q}\right\Vert
_{L^{1}(Q_{\Omega,s,\tau})}).\text{)}%
\]
hence $\left(  u_{n}\right)  $ is bounded in $L_{loc}^{k}(\left[  0,T\right)
;W_{loc}^{1,k}(\mathbb{R}^{N}))$ (resp. $L_{loc}^{k}(\left[  0,T\right)
;W_{0}^{1,k}(\Omega))$). Since $q<q_{\ast},$ $(\left\vert \nabla
u_{n}\right\vert ^{q})$ is equiintegrable in $Q_{B_{M},\tau}$ for any $M>0$
(resp. in $Q_{\Omega,\tau})$ and $\tau\in\left(  0,T\right)  ,$ then $\left(
\left\vert \nabla u\right\vert ^{q}\right)  \in L_{loc}^{1}\left(  \left[
0,T\right)  ;L_{loc}^{1}(\Omega)\right)  $. From (\ref{bou}),
\begin{equation}
\int_{\Omega}u_{n}(t,.)\xi+\int_{0}^{t}\int_{\Omega}|\nabla u_{n}|^{q}%
\xi=-\int_{0}^{t}\int_{\Omega}\nabla u_{n}.\nabla\xi+\int_{\Omega}\xi
du_{0}.\label{zin}%
\end{equation}
As $n\rightarrow\infty$ we obtain
\[
\int_{\Omega}u(t,.)\xi+\int_{0}^{t}\int_{\Omega}|\nabla u|^{q}\xi=-\int%
_{0}^{t}\int_{\Omega}\nabla u.\nabla\xi+\int_{\Omega}\xi du_{0}.
\]
Thus $\lim_{t\rightarrow0}\int_{\Omega}u(.,t)\xi=\int_{\Omega}\xi du_{0},$ for
any $\xi\in C_{c}^{1+}(\Omega),$ hence for any $\xi\in C_{c}^{+}(\Omega);$
hence $u$ admits the trace $u_{0}.$

(ii) Case $q_{\ast}\leqq q\leqq2.$ Let us  set $u_{0,n}=\min(u_{0}%
,n)\chi_{B_{n}}$ (resp. $u_{0,n}=\min(u_{0},n)\chi_{\overline{\Omega
_{1/n}^{\prime}}}$ for $n$ large enough). Then $u_{0,n}\in L^{R}(\Omega)$ for
any $R\geqq1$. From Theorem \ref{souc}, the problem admits a solution $u_{n}$
, and it is unique in $C(\left[  0,T\right)  ;L^{R}\left(  \Omega\right)  )$
for any $R>(2-q)/N(q-1)$ and then $\left(  u_{n}\right)  $ is nondecreasing.
As above, $\left(  u_{n}\right)  $\textbf{ }is bounded in $L_{loc}^{\infty
}\left(  \left[  0,T\right)  ;L_{loc}^{1}(\Omega)\right)  ,$ $\left(
\left\vert \nabla u_{n}\right\vert ^{q}\right)  $ is bounded in $L_{loc}%
^{1}\left(  \left[  0,T\right)  ;L_{loc}^{1}(\Omega)\right)  ,$  $\left(
u_{n}\right)  $\textbf{ }is bounded in $L_{loc}^{\infty}\left(  (0,T);L_{loc}%
^{\infty}(\Omega)\right)  $ from Theorem \ref{local}. From Theorem \ref{gul},
$\left(  u_{n}\right)  $ converges in $C_{loc}^{2,1}(Q_{\Omega,T})$ to a weak
solution $u$ of (\ref{un}) in $Q_{\Omega,T},$ such that $u\in L_{loc}^{\infty
}\left(  \left[  0,T\right)  ;L_{loc}^{1}(\Omega)\right)  $ and $\left\vert
\nabla u\right\vert ^{q}\in$ $L_{loc}^{1}\left(  \left[  0,T\right)
;L_{loc}^{1}(\Omega)\right)  $.

Then from Remark \ref{trac}, $u$ admits a trace $\mu_{0}\in\mathcal{M}%
^{+}(\Omega)$ as $t\rightarrow0.$ Applying (\ref{zin}) to $u_{n},$ since
$u_{n}\leqq u,$ we get
\[
\lim_{t\rightarrow0}\int_{\Omega}u(.,t)\xi=\int_{\Omega}\xi d\mu_{0}\geqq
\lim_{t\rightarrow0}\int_{\Omega}u_{n}(.,t)\xi=\int_{\Omega}\xi du_{0},
\]
for any $\xi\in C_{c}^{+}(\Omega);$ thus $u_{0}\leqq\mu_{0}.$ Moreover
\[
\int_{\Omega}u_{n}(t,.)\xi+\int_{0}^{t}\int_{\Omega}|\nabla u_{n}|^{q}\xi
=\int_{0}^{t}\int_{\Omega}u_{n}\Delta\xi dx+\int_{\Omega}\xi du_{0}.
\]
And $\left(  u_{n}\right)  $ \textbf{ }is bounded in $L^{k}(Q_{\omega,\tau})$
for any $k\in\left(  1,q_{\ast}\right)  ;$ then for any domain $\omega
\subset\subset\Omega$, $(u_{n})$ converges strongly in $L^{1}(Q_{\omega,\tau
})$ ; then from the convergence $a.e.$ of the gradients, and the Fatou Lemma,%
\[
\int_{\mathbb{R}^{N}}u(t,.)\xi+\int_{0}^{t}\int_{\mathbb{R}^{N}}|\nabla
u|^{q}\xi\leqq\int_{0}^{t}\int_{\mathbb{R}^{N}}u\Delta\xi dx+\int%
_{\mathbb{R}^{N}}\xi du_{0}.
\]
But from Remark \ref{trac},
\[
\int_{\mathbb{R}^{N}}u(t,.)\xi+\int_{0}^{t}\int_{\mathbb{R}^{N}}|\nabla
u|^{q}\xi=\int_{0}^{t}\int_{\mathbb{R}^{N}}u\Delta\xi dx+\int_{\mathbb{R}^{N}%
}\xi d\mu_{0},
\]
then $\mu_{0}\leqq u_{0},$ hence $\mu_{0}=u_{0}.$ Finally we prove the
continuity: Let $\xi\in\mathcal{D}^{+}(\Omega)$ and $\omega\subset
\subset\Omega$ containing the support of $\xi.$ Then $z=u\xi$ is solution of
the Dirichlet problem
\[
\left\{
\begin{array}
[c]{l}%
z_{t}-\Delta z=g,\quad\text{in}\hspace{0.05in}Q_{\omega,T},\\
z=0,\quad\text{on}\hspace{0.05in}\partial\omega\times(0,T),\\
\lim_{t\rightarrow0}z(.,t)=\xi u_{0},\text{ weakly in }\mathcal{M}_{b}%
(\omega),
\end{array}
\right.
\]
with $g=-\left\vert \nabla u\right\vert ^{q}\xi+v(-\Delta\psi)-2\nabla
v.\nabla\psi\in L^{1}(Q_{\omega,T}).$ The solution is unique, see
\cite[Proposition 2.2]{BeDa}. Since $u_{0}\in L_{loc}^{1}\left(
\Omega\right)  ,$ there also exists a unique solution such that $z\in
C(\left[  0,T\right)  ,L^{1}(\omega))$ from \cite[Lemma 3.3]{BaPi}, hence
$u\in C(\left[  0,T\right)  ,L_{loc}^{1}(\Omega)).\medskip$

(iii) Case $q>2.$ We get the existence as above, by taking for $\left(
u_{0,n}\right)  $ a nondecreasing sequence in $C_{b}\left(  \mathbb{R}%
^{N}\right)  $ (resp. in $C_{0}\left(  \Omega\right)  ),$ converging to
$u_{0},$ and using Remark \ref{gil} for classical solutions.\medskip
\end{proof}

Next we show the nonuniqueness of the weak solutions when $q>2:$ here the
coefficient $a$ defined at (\ref{vala}) is negative, and $\left\vert
a\right\vert =(q-2)/(q-1)<1.$\medskip

\begin{proof}
[Proof of Theorem \ref{twosol}]Since $q>2$ and $N\geq2,$ the function
$\tilde{U}$ is a solution in $\mathcal{D}^{\prime}\left(  \mathbb{R}%
^{N}\right)  $ of the stationary equation%
\[
-\Delta u+|\nabla u|^{q}=0
\]
Indeed $\tilde{U}\in W_{loc}^{1,q}\left(  \mathbb{R}^{N}\right)  \cap
W_{loc}^{2,1}\left(  \mathbb{R}^{N}\right)  $ because $N>q^{\prime},$ and
$\tilde{U}$ is a classical solution in $\mathbb{R}^{N}\backslash\left\{
0\right\}  .$ Then it is a weak solution of $(P_{\mathbb{R}^{N},\infty}),$ and
$\tilde{U}\not \in C^{1}(Q_{\mathbb{R}^{N},\infty}).$ Since $\tilde{U}\in
C\left(  \mathbb{R}^{N}\right)  ,$ from Theorem \ref{exiloc}, or from
\cite{BeBALa}, there exists also a classical solution $U_{\tilde{C}}\in
C^{2,1}(Q_{\mathbb{R}^{N},\infty})$ of the problem, thus $U_{\tilde{C}}\neq
U_{0}.$

More generally, for any $C>0,$ there exists a classical solution $U_{C}$ with
trace $C\left\vert x\right\vert ^{\left\vert a\right\vert }.$ And $U_{C}$ is
obtained as the limit of the nondecreasing sequence of the unique solutions
$U_{n,C}$ with trace $\min(C\left\vert x\right\vert ^{\left\vert a\right\vert
},n),$ then it is radial. Moreover for any $\lambda>0,$ the function
$U_{n,C,\lambda}(x,t)=\lambda^{-a}U_{n,C}(\lambda x,\lambda^{2}t)$ admits the
trace $\min(C\left\vert x\right\vert ^{\left\vert a\right\vert },n\lambda
^{-a}).$ Therefore, denoting by $k_{\lambda,n}$ the integer part of
$n\lambda^{-a},$ there holds $U_{k_{\lambda,n},C}\leq U_{n,C,\lambda}\leq
U_{k_{\lambda,n}+1}$ from the comparison principle. And $U_{n,C,\lambda}(x,t)$
converges everywhere to $\lambda^{-a}U_{C}(\lambda x,\lambda^{2}t),$ thus
$U_{C}(x,t)=\lambda^{-a}U_{C}(\lambda x,\lambda^{2}t),$ that means $U_{C}$ is
self-similar. Then $U_{C}$ has the form (\ref{fom}), where $f\in C^{2}(\left[
0,\infty\right)  ),$ $f(0)\geqq0,f^{\prime}(0)=0,$  $\lim_{\eta\rightarrow
\infty}\eta^{-\left\vert a\right\vert /2}f(\eta)=C,$ and for any $\eta>0,$%
\begin{equation}
f^{\prime\prime}(\eta)+(\frac{N-1}{\eta}+\frac{\eta}{2})f^{\prime}(\eta
)-\frac{\left\vert a\right\vert }{2}f(\eta)-\left\vert f^{\prime}%
(\eta)\right\vert ^{q}=0.\label{tra}%
\end{equation}
From the Cauchy-Lipschitz Theorem, we find $f(0)>0,$ since $f\not \equiv 0,$
hence $f^{\prime\prime}(0)>0.$ The function $f$ is increasing: indeed if there
exists a first point $\eta_{0}>0$ such that $f^{\prime}(\eta_{0})=0,$ then
$f^{\prime\prime}(\eta_{0})>0,$ which is contradictory.
\end{proof}

\section{Second local regularizing effect\label{sec6}}

Here we show the second regularizing effect. We prove an estimate, playing the
role of the sub-caloricity estimate (\ref{nusc}). Our proof follows the
general scheme of Stampacchia's method, developped by many authors, see
\cite{DiBe1} and references there in, and \cite{FoSoVe}.\medskip

First we write estimate (\ref{ruc}) in another form, and from Gagliardo
estimate, we obtain the following:

\begin{lemma}
\label{xit}Let $q>1.$ Let $\eta>0,r\geqq1.$ Let $u$ be any nonnegative weak
subsolution of equation (\ref{un}) in $Q_{\Omega,T}.$ Let $B_{2\eta}%
\subset\subset\Omega,$ $0<\theta<\tau<T,$ and $\xi\in C^{1}((0,T),C_{c}%
^{1}(\Omega)),$ with values in $\left[  0,1\right]  ,$ such that $\xi(.,t)=0$
for $t\leqq\theta$. Let $\lambda\geqq\max(2,q^{\prime}).$ \medskip

Then for any $\nu\in\left(  0,1\right]  ,$
\begin{equation}
\sup_{\left[  \theta,\tau\right]  }\int_{\Omega}u^{r}(.,t)\xi^{\lambda}%
+\frac{\int_{\theta}^{\tau}\int_{\Omega}u^{(q+r-1)(1+\frac{\mu}{N})}%
\xi^{\lambda(1+\frac{\mu}{N})}}{(\sup_{t\in\left[  \theta,\tau\right]  }%
\int_{\Omega}u^{r}\xi^{\frac{\lambda r}{q+r-1}})^{\frac{q}{N}}}\leqq
C\int_{\theta}^{\tau}\int_{\Omega}(u^{r}\left\vert \xi_{t}\right\vert
+u^{r-1}\left\vert \nabla\xi\right\vert ^{q^{\prime}}+u^{q+r-1}\left\vert
\nabla\xi\right\vert ^{q}), \label{fle}%
\end{equation}
where $\mu=rq/(q+r-1),$ $C=C(N,q,r,\lambda).$
\end{lemma}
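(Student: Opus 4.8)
The plan is to start from the integral inequality (\ref{ruc}) of Lemma \ref{int}, specialised to $R=r$ and to a cutoff $\xi$ that vanishes for $t\leqq\theta$, and to convert it into an energy estimate of the parabolic type: a bound on $\sup_{[\theta,\tau]}\int u^{r}\xi^{\lambda}$ \emph{plus} a bound on a space-time integral of $u^{r-1}|\nabla u|^{q}\xi^{\lambda}$ (I drop the nonnegative $u^{r-2}|\nabla u|^{2}$ term). Because $\xi(.,\theta)=0$, the term $\int_{\Omega}u^{r}(.,s)\xi^{\lambda}$ at the initial time disappears after letting $s\downarrow\theta$, so the right-hand side is exactly $C\int_{\theta}^{\tau}\int_{\Omega}(u^{r}|\xi_{t}|+u^{r-1}\xi^{\lambda-q'}|\nabla\xi|^{q'})$, and the same holds with $\tau$ replaced by any $t\in[\theta,\tau]$; taking the sup over such $t$ gives the control of $\sup_{[\theta,\tau]}\int u^{r}\xi^{\lambda}$ with that right-hand side. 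The extra $u^{q+r-1}|\nabla\xi|^{q}$ term in (\ref{fle}) will appear when I handle the gradient integral below, so I will just carry it along.

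Next I rewrite $u^{r-1}|\nabla u|^{q}\xi^{\lambda}$ as $c_{q,r}\,|\nabla(u^{(q+r-1)/q})|^{q}\xi^{\lambda}$ and, using $\lambda\geqq q'$ so that $\xi^{\lambda}=(\xi^{\lambda/q})^{q}$ with $\xi^{\lambda/q}$ still $C^{1}$, I absorb the cutoff: $|\nabla(u^{(q+r-1)/q}\xi^{\lambda/q})|^{q}\leqq C(|\nabla(u^{(q+r-1)/q})|^{q}\xi^{\lambda}+u^{q+r-1}|\nabla\xi|^{q}\xi^{\lambda-q})$. Hence $\int_{\theta}^{\tau}\int_{\Omega}|\nabla(u^{(q+r-1)/q}\xi^{\lambda/q})|^{q}$ is bounded by the same right-hand side as in (\ref{fle}) (this is where the third term enters). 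So I have simultaneous control of $\sup_{t}\int g(.,t)^{r/(q+r-1)}$ and of $\int_{\theta}^{\tau}\int|\nabla g|^{q}$, where $g=u^{(q+r-1)/q}\xi^{\lambda/q}\in L^{q}((\theta,\tau);W^{1,q})\cap L^{\infty}((\theta,\tau);L^{r/(q+r-1)})$. Now I apply the standard Gagliardo–Nirenberg interpolation in $\mathbb{R}^{N}$ (extend $g$ by zero): for the right exponent this gives $\int_{\theta}^{\tau}\int |g|^{q(1+\mu/N)}\leqq C\big(\int_{\theta}^{\tau}\int|\nabla g|^{q}\big)\big(\sup_{t}\int|g|^{r/(q+r-1)}\big)^{q/N}$ with $\mu=rq/(q+r-1)$ chosen precisely so the scaling matches; since $|g|^{q(1+\mu/N)}=u^{(q+r-1)(1+\mu/N)}\xi^{\lambda(1+\mu/N)}$, dividing by the sup-power yields exactly (\ref{fle}).

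The one point needing care — the main obstacle — is the bookkeeping of the exponents in the Gagliardo–Nirenberg step: one must check that with $\sigma=r/(q+r-1)$ (the $L^{\infty}_t L^{\sigma}_x$ exponent) and $g\in W^{1,q}_x$, the interpolated Lebesgue exponent is $q(1+\mu/N)$ with $\mu=rq/(q+r-1)$, and that the interpolation parameter is admissible, i.e. the time integral of the interpolated norm produces the product $\big(\int\!\!\int|\nabla g|^{q}\big)^{1}\cdot(\sup_t\int|g|^{\sigma})^{q/N}$ with total exponent on the RHS adding up correctly (this is the classical ``parabolic Sobolev'' computation, using $\int_{\theta}^{\tau}\|g(.,t)\|_{q(1+\mu/N)}^{q(1+\mu/N)}\,dt$ and Hölder in $t$). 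Once the exponents are pinned down, the constant depends only on $N,q,r,\lambda$ as claimed. I would also remark that $\lambda\geqq2$ is needed only to keep the dropped $u^{r-2}|\nabla u|^{2}\xi^{\lambda}$ term and the $|\xi_t|$-term under control in the form stated in (\ref{ruc}) when $r>1$, and that the restriction to $B_{2\eta}\subset\subset\Omega$ enters only through the requirement that $\xi$ be compactly supported there; no other geometry of $\Omega$ is used, and $\nu\in(0,1]$ enters only through $\nu\leqq1$ exactly as in Lemma \ref{int}.
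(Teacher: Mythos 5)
Your proposal follows the paper's proof essentially step for step: the energy estimate obtained from (\ref{ruc}) with a cutoff vanishing for $t\leqq\theta$ (so the initial term drops), the absorption of $\xi^{\lambda/q}$ into the gradient of $u^{(q+r-1)/q}$ (which is exactly where the third term $u^{q+r-1}|\nabla\xi|^{q}$ enters), and the parabolic Gagliardo--Nirenberg inequality of \cite[Proposition 3.1]{DiBe1} applied to $w=u^{\frac{q+r-1}{q}}\xi^{\frac{\lambda}{q}}$, followed by division by the sup-term. The only slip is the $L^{\infty}_{t}L^{\sigma}_{x}$ exponent: it must be $\sigma=\mu=qr/(q+r-1)$, so that $|w|^{\mu}=u^{r}\xi^{\frac{\lambda r}{q+r-1}}$ matches the denominator of (\ref{fle}), not $r/(q+r-1)$ as written.
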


\begin{proof}
From Remark \ref{subreg}, $u\in L_{loc}^{\infty}(Q_{\Omega,T})),$ and hence
$u^{\frac{q+r-1}{q}}\xi^{\frac{\lambda}{q}}\in W^{1,q}(Q_{\Omega,\theta,t})$
and
\begin{align*}
\int_{\theta}^{t}\int_{\Omega}|\nabla(u^{\frac{q+r-1}{q}}\xi^{\frac{\lambda
}{q}})|^{q}  &  =\int_{\theta}^{t}\int_{\Omega}\left\vert \frac{q+r-1}%
{q}u^{\frac{r-1}{q}}\xi^{\frac{\lambda}{q}}\nabla u+\frac{\lambda}{q}%
u^{\frac{q+r-1}{q}}\xi^{\frac{\lambda-q}{q}}\nabla\xi\right\vert ^{q}\\
&  \leqq C(\int_{\theta}^{t}\int_{\Omega}u^{r-1}|\nabla u|^{q}\xi^{\lambda
}+\int_{\theta}^{t}\int_{\Omega}u^{q+r-1}|\nabla\xi|^{q}\xi^{\lambda-q}),
\end{align*}
with $C=C(q,r,\lambda).$ From (\ref{ruc}), since $\nu\leqq1,$ we get%
\begin{equation}
\sup_{\left[  \theta,\tau\right]  }\int_{\Omega}u^{r}(.,t)\xi^{\lambda}%
+\int_{\theta}^{\tau}\int_{\Omega}|\nabla(u^{\frac{q+r-1}{q}}\xi
^{\frac{\lambda}{q}})|^{q}\leqq C\int_{\theta}^{\tau}\int_{\Omega}%
(u^{r}\left\vert \xi_{t}\right\vert +u^{r-1}\left\vert \nabla\xi\right\vert
^{q^{\prime}}+u^{q+r-1}\left\vert \nabla\xi\right\vert ^{q}), \label{flo}%
\end{equation}
where $C=C(q,r,\lambda).$ Next we use a Galliardo type estimate, see
\cite[Proposition 3.1]{DiBe1}: for any $\mu\geqq1,$ and any $w\in
L_{loc}^{\infty}((0,T),L^{\mu}(\Omega))\cap L_{loc}^{q}((0,T),W^{1,q}%
(\Omega)),$
\[
\int_{\theta}^{\tau}\int_{\Omega}w^{q(1+\frac{\mu}{N})})\leqq C(\int_{\theta
}^{\tau}\int_{\Omega}|\nabla w|^{q})(\sup_{t\in\left[  \theta,\tau\right]
}\int_{\Omega}|w|^{\mu})^{\frac{q}{N}},\qquad C=C(N,q,\mu).
\]
Taking $w=u^{\frac{q+r-1}{q}}\xi^{\frac{\lambda}{q}}$ and $\mu=qr/(q+r-1)\geqq
r\geqq1,$ setting $s=1+\mu/N,$ it comes
\[
\int_{\theta}^{\tau}\int_{\Omega}u^{(q+r-1)s}\xi^{\lambda s}\leqq
C(\int_{\theta}^{\tau}\int_{\Omega}|\nabla w|^{q})(\sup_{t\in\left[
\theta,\tau\right]  }\int_{\Omega}u^{r}\xi^{\frac{\lambda r}{q+r-1}}%
)^{\frac{q}{N}},
\]
hence (\ref{fle}) follows.\medskip
\end{proof}

\begin{theorem}
\label{locest}Let $q>1.$ Let $u$ be any nonnegative weak solution of equation
(\ref{un}) in $Q_{\Omega,T}.$ Let $B(x_{0},\rho)\subset\subset\Omega.$ Let
$R>q-1$ (in particular any $R\geqq1$ if $q<2).$ Then there exists $C=C(N,q,R)$
such that, for any $t,\theta$ such that $0<t-2\theta<t<T,$
\begin{align}
\sup_{B(x_{0},\frac{\rho}{2})\times\left[  t-\theta,t\right]  }u  &  \leqq
C\theta^{-\frac{N+q}{qR+N(q-1)}}(\int_{t-2\theta}^{t}\int_{B(x_{0},\rho)}%
u^{R})^{\frac{q}{qR+N(q-1)}}\nonumber\\
&  +C\rho^{-\frac{N+q}{(q-1)(R+N+1)}}(\int_{t-2\theta}^{t}\int_{B(x_{0},\rho
)}u^{R})^{\frac{1}{R+N+1}}+C\rho^{-\frac{N+q}{R+1-q}}(\int_{t-2\theta}^{t}%
\int_{B(x_{0},\rho)}u^{R})^{\frac{1}{R+1-q}}. \label{gr2}%
\end{align}

\end{theorem}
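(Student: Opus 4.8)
The plan is to run a De Giorgi--Stampacchia iteration on the truncated energies, using Lemma~\ref{xit} as the basic ``energy'' estimate in place of the subcaloricity inequality~\eqref{nusc}. The point of the two exponents $\frac{q}{qR+N(q-1)}$ and the $\rho$-terms is that the right-hand side of~\eqref{fle} contains three distinct powers of $u$ (namely $u^{r}$, $u^{r-1}$, $u^{q+r-1}$), and each one produces its own scaling in the iteration; the term with the best time-decay dominates for small $\theta$ while the two $\rho$-terms come from $|\nabla\xi|^{q^\prime}$ and $|\nabla\xi|^{q}$ respectively.

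First I would fix the geometry: set $x_{0}=0$, and for $k\in\mathbb{N}$ introduce the shrinking cylinders $Q_{k}=B_{\rho_{k}}\times(t-\theta_{k},t)$ with $\rho_{k}=\tfrac{\rho}{2}(1+2^{-k})$ decreasing to $\tfrac{\rho}{2}$ and $\theta_{k}=\theta(1+2^{-k})$ decreasing to $\theta$, together with the increasing truncation levels $\ell_{k}=\ell(1-2^{-k})$ increasing to a value $\ell>0$ to be chosen at the end. I would then apply Lemma~\ref{xit} to the subsolution $(u-\ell_{k+1})_{+}$ — which is again a nonnegative weak subsolution of~\eqref{un} since the equation is invariant under subtracting a constant and the absorption term only helps — with a cutoff $\xi=\xi_{k}$ equal to $1$ on $Q_{k+1}$, supported in $Q_{k}$, vanishing for $t\le t-\theta_{k}$, and satisfying $|\nabla\xi_{k}|\le C2^{k}/\rho$, $|\partial_{t}\xi_{k}|\le C2^{k}/\theta$. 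Writing $Y_{k}=\int_{t-\theta_{k}}^{t}\int_{B_{\rho_{k}}}(u-\ell_{k})_{+}^{R}$ for the ``level sets'' quantity (I will take $r=R$ in Lemma~\ref{xit}), the left side of~\eqref{fle} controls, after an application of the Gagliardo interpolation already built into~\eqref{fle}, an expression of the form $Y_{k+1}^{1+\kappa}$ for some $\kappa=\kappa(N,q,R)>0$, while the right side is bounded by a sum of three terms each of the shape (power of $2^{k}$)$\times$(power of $\theta$ or $\rho$)$\times Y_{k}^{\alpha_i}$ with $\alpha_i>1$; here one uses $(u-\ell_{k+1})_{+}\le (u-\ell_{k})_{+}$ and Chebyshev to convert lower powers of the truncated function into powers of $Y_k$ at the cost of negative powers of the gap $\ell_{k+1}-\ell_{k}\sim \ell 2^{-k}$.

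The heart of the argument is then the standard fast-geometric-convergence lemma: if $Y_{k+1}\le C\,b^{k}\,Y_{k}^{1+\varepsilon}$ with $b>1$, $\varepsilon>0$, then $Y_{k}\to 0$ provided $Y_{0}\le C^{-1/\varepsilon}b^{-1/\varepsilon^{2}}$. Since the three right-hand terms in~\eqref{fle} have different homogeneities in $(u,\ell,\theta,\rho)$, after dividing through by the appropriate power of $\ell$ I get a relation $Y_{k+1}\le b^{k}\bigl(A_{1}\ell^{-\beta_{1}}\theta^{-\gamma_{1}}Y_{k}^{1+\varepsilon_{1}}+A_{2}\ell^{-\beta_{2}}\rho^{-\gamma_{2}}Y_{k}^{1+\varepsilon_{2}}+A_{3}\ell^{-\beta_{3}}\rho^{-\gamma_{3}}Y_{k}^{1+\varepsilon_{3}}\bigr)$; the smallness condition on $Y_{0}$ then forces $\ell$ to be at least a constant times the maximum of three quantities, namely $\theta^{-\frac{N+q}{qR+N(q-1)}}(\int_{t-2\theta}^{t}\int_{B_\rho}u^{R})^{\frac{q}{qR+N(q-1)}}$, $\rho^{-\frac{N+q}{(q-1)(R+N+1)}}(\int u^{R})^{\frac{1}{R+N+1}}$, and $\rho^{-\frac{N+q}{R+1-q}}(\int u^{R})^{\frac{1}{R+1-q}}$ — the three terms appearing on the right of~\eqref{gr2}. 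Choosing $\ell$ equal to (a constant times) that maximum makes $Y_{\infty}=0$, i.e.\ $u\le\ell$ a.e.\ on $B_{\rho/2}\times[t-\theta,t]$, and since the maximum of three nonnegative numbers is bounded by their sum, \eqref{gr2} follows.

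The step I expect to be the main obstacle is the careful bookkeeping of exponents: matching the power $\kappa$ produced by the Gagliardo term on the left of~\eqref{fle} (which involves $\mu=rq/(q+r-1)$ and $s=1+\mu/N$) against the powers $\alpha_{i}$, $\beta_{i}$, $\gamma_{i}$ coming from the three source terms, and verifying in each case that $\varepsilon_{i}=\alpha_{i}/(1+\kappa)\cdot(1+\kappa)-1>0$ — equivalently that the scaling is genuinely subcritical so the iteration converges — and that solving the resulting smallness inequality for $\ell$ reproduces precisely the three stated exponents $\frac{N+q}{qR+N(q-1)}$, $\frac{N+q}{(q-1)(R+N+1)}$, $\frac{N+q}{R+1-q}$. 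One also has to check that $R>q-1$ is exactly what is needed for the last term (the one with $|\nabla\xi|^{q}$, i.e.\ the $u^{q+R-1}$ contribution) to have a positive power $\varepsilon_{3}$, which is where the hypothesis enters; the condition $\lambda\ge\max(2,q^{\prime})$ in Lemma~\ref{xit} and a possible further enlargement of $\lambda$ are harmless since the cutoff exponents only affect constants. For $q<2$ and $R=1$ with $u_0$ merely a measure, the same proof applies verbatim once one invokes~\eqref{zof} to control $\int_{B_\rho}u(\cdot,s)$ near $s=0$, but that refinement belongs to Theorem~\ref{effects} rather than to the present statement.
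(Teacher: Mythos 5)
Your overall strategy -- a De Giorgi--Stampacchia iteration on truncations driven by Lemma~\ref{xit} -- is exactly the paper's, but there is a concrete gap in how you set it up. You take $r=R$ in Lemma~\ref{xit} and define $Y_{k}=\int\!\!\int(u-\ell_{k})_{+}^{R}$, claiming that Chebyshev converts all three source terms on the right of~\eqref{fle} into powers of $Y_{k}$. This works for $u^{r}=u^{R}$ and $u^{r-1}=u^{R-1}$, since Chebyshev trades a \emph{lower} power of the truncation for a \emph{higher} one at the cost of negative powers of the level gap; but the third term carries $u^{q+r-1}=u^{q+R-1}$ with $q+R-1>R$, and there is no way to dominate it by $\int\!\!\int(u-\ell_{k})_{+}^{R}$ without an a priori sup bound. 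So the iteration does not close on the $L^{R}$ quantity, and its natural input is $\int\!\!\int u^{q+R-1}$ rather than the $\int\!\!\int u^{R}$ appearing in~\eqref{gr2}. The paper avoids this by choosing $r$ so that the \emph{top} exponent $q+r-1$ equals $R$, i.e.\ $r=R+1-q$; the iteration then runs on $Y_{n}=\int\!\!\int u_{k_{n}}^{q+r-1}=\int\!\!\int u_{k_{n}}^{R}$ (in fact as a two-step recursion $Y_{n+2}\leqq Db^{n}Y_{n}^{1+\alpha}$ coupling $Y_{n}$ with $Z_{n}=\sup_{t}\int u_{k_{n}}^{r}$), and the three admissibility conditions on the level $k$ produce precisely the three terms of~\eqref{gr2}.

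This choice is only legitimate when $r=R+1-q\geqq1$, i.e.\ $R\geqq q$. For $q-1<R<q$ the paper runs the iteration with $r=1$ (so the energy exponent is $q$), obtains~\eqref{vol} in terms of $\int\!\!\int u^{q}$, interpolates $\int\!\!\int u^{q}\leqq(\sup u)^{q-R}\int\!\!\int u^{R}$, and absorbs the resulting powers of $\sup u$ by a second iteration $M_{n}\leqq\tfrac12 M_{n+1}+\dots$ over nested cylinders; the absorption via Young requires exactly $q-R<1$. So the hypothesis $R>q-1$ enters through this secondary interpolation step, not through the positivity of an exponent $\varepsilon_{3}$ in the primary De Giorgi recursion as you suggest. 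To repair your argument you need both ingredients: the correct identification $q+r-1=R$ for $R\geqq q$, and the supplementary sup-interpolation for $q-1<R<q$.
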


\begin{proof}
Since $u\in C((0,T);L_{loc}^{R}(Q_{\Omega,T})),$ by regularization we can
assume that $u$ is a classical solution in $Q_{\Omega,T}.$ Let $t,\theta$ such
that $0<t-2\theta<t<T.$ We can assume $x_{0}=0\in\Omega.$ By translation of
$t-\theta,$ we are lead to prove that for any solution in $Q_{\Omega
,-\tau/2,\tau/2}\ $($\tau<T$),
\begin{align}
\sup_{Q_{B_{\rho/2}},0,\theta}u  &  \leqq C\theta^{-\frac{N+q}{qR+N(q-1)}%
}(\int_{-\theta}^{\theta}\int_{B_{\rho}}u^{R})^{\frac{q}{qR+N(q-1)}%
}\nonumber\\
&  +C\rho^{-\frac{N+q}{(q-1)(R+N+1)}}(\int_{-\theta}^{\theta}\int_{B_{\rho}%
}u^{R})^{\frac{1}{R+N+1}}+C\rho^{-\frac{N+q}{R+1-q}}(\int_{-\theta}^{\theta
}\int_{B_{\rho}}u^{R})^{\frac{1}{R+1-q}}. \label{grf}%
\end{align}
For given $k>0$ we set $u_{k}=(u-k)^{+}$ . Then $u_{k}\in C(0,T);L_{loc}%
^{R}(Q_{\Omega,T})),$ and $u_{k}$ is a weak subsolution of equation
(\ref{un}), from the Kato inequality. We set
\begin{align*}
\rho_{n}  &  =(1+2^{-n})\rho/2,\qquad t_{n}=-(1+2^{-n})\theta/2,\\
Q_{n}  &  =B_{\rho_{n}}\times(t_{n},\theta),\qquad Q_{0}=B_{\rho}%
\times(-\theta,\theta),\qquad Q_{\infty}=B_{\rho/2}\times(-\theta/2,\theta),\\
k_{n}  &  =(1-2^{-(n+1)})k,\text{\qquad}\tilde{k}=(k_{n}+k_{n+1})/2.
\end{align*}
and set $M_{\sigma}=\sup_{Q_{\infty}}u,$ $M=\sup_{Q_{0}}u.$ Let $\xi
(x,t)=\xi_{1}(x)\xi_{2}(t)$ where $\xi_{1}\in C_{c}^{1}(\Omega),$ $\xi_{2}\in
C^{1}(\mathbb{R)},$ with values in $\left[  0,1\right]  $, such that%
\begin{align*}
\xi_{1}  &  =1\quad\text{on }B_{\rho_{n+1}},\quad\quad\xi_{1}=0\quad\text{on
}\mathbb{R}^{N}\backslash B_{\rho_{n}},\quad\quad\left\vert \nabla\xi
_{1}\right\vert \leqq C(N)2^{n+1}/\rho;\\
\xi_{2}  &  =1\quad\text{on }\left[  \theta_{n+1},\infty\right)  ,\quad
\quad\xi_{2}=0\quad\text{on }\left(  -\infty,\theta_{n}\right]  ,\quad
\quad\left\vert \xi_{2,t}\right\vert \leqq C(N)2^{n+1}/\theta.
\end{align*}
From Lemma \ref{xit} we get, with $\mu=qr/(q+r-1),$
\begin{align*}
&  \sup_{t\in\left[  t_{n+1},\theta\right]  }%
{\textstyle\int_{B_{\rho_{n+1}}}}
u_{k_{n+1}}^{r}(.,t)+\frac{\int_{t_{n+1}}^{\theta}\int_{B_{\rho_{n+1}}%
}u_{k_{n+1}}^{(q+r-1)(1+\frac{\mu}{N})}}{(\sup_{t\in\left[  t_{n}%
,\theta\right]  }\int_{B_{\rho_{n}}}u_{k_{n}}^{r})^{\frac{q}{N}}}\leqq
CX_{n},\text{ where }\\
X_{n}  &  =\int_{t_{n}}^{\theta}\int_{B_{\rho_{n}}}(u_{k_{n+1}}^{r}\left\vert
\zeta_{t}\right\vert +u_{k_{n+1}}^{r-1}\left\vert \nabla\xi\right\vert
^{q^{\prime}}+u_{k_{n+1}}^{q+r-1}\left\vert \nabla\xi\right\vert ^{q})).
\end{align*}
Let us define
\[
Y_{n}=\int_{t_{n}}^{\theta}\int_{B_{\rho_{n}}}u_{k_{n}}^{q+r-1},\quad
Z_{n}=\sup_{t\in\left[  t_{n},\theta\right]  }\int_{B_{\rho_{n}}}u_{k_{n}}%
^{r},\quad W_{n}=\int_{t_{n}}^{\theta}\int_{B_{\rho_{n}}}\chi_{\left\{  u\geqq
k_{n}\right\}  }.
\]
Thus, from the H\"{o}lder inequality,
\begin{equation}
Z_{n+1}+Z_{n}^{-\frac{q}{N}}W_{n+1}^{-\frac{\mu}{N}}{}Y_{n+1}^{1+\frac{\mu}%
{N}}\leqq CX_{n}. \label{rele}%
\end{equation}
Morever, for any $\gamma,\beta>0,$
\begin{align*}
\int_{t_{n}}^{\theta}\int_{B_{\rho_{n}}}u_{k_{n}}^{\gamma+\beta}  &  \geqq
\int_{t_{n}}^{\theta}\int_{B_{\rho_{n}}}(k_{n}-k_{n+1})^{\gamma+\beta}%
\chi_{\left\{  u\geqq k_{n+1}\right\}  }\\
&  \geqq(k2^{-(n+2)})^{\gamma+\beta}\int_{t_{n}}^{\theta}\int_{B_{\rho_{n}}%
}\chi_{\left\{  u\geqq k_{n+1}\right\}  }\geqq(k2^{-(n+2)})^{\gamma+\beta}%
\int_{t_{n+1}}^{\theta}\int_{B_{\rho_{n+1}}}\chi_{\left\{  u\geqq
k_{n+1}\right\}  },
\end{align*}
and from the H\"{o}lder inequality,
\begin{align*}
\int_{t_{n}}^{\theta}\int_{B_{\rho_{n}}}u_{k_{n+1}}^{\gamma}  &  \leqq
(\int_{t_{n}}^{\theta}\int_{B_{\rho_{n}}}u_{k_{n+1}}^{\gamma+\beta}%
)^{\frac{\gamma}{\gamma+\beta}}(\int_{t_{n}}^{\theta}\int_{B_{\rho_{n}}}%
\chi_{\left\{  u\geqq k_{n+1}\right\}  })^{\frac{\beta}{\gamma+\beta}}\\
&  \leqq(\int_{t_{n}}^{\theta}\int_{B_{\rho_{n}}}u_{k_{n}}^{\gamma+\beta
})(k^{-1}2^{(n+2)})^{\beta}(\int_{t_{n}}^{\theta}\int_{B_{\rho_{n}}}u_{k_{n}%
}^{\gamma+\beta})^{\frac{\beta}{\gamma+\beta}}\\
&  \leq(k^{-1}2^{(n+2)})^{\beta}\int_{t_{n}}^{\theta}\int_{B_{\rho_{n}}%
}u_{k_{n}}^{\gamma+\beta}.
\end{align*}
Thus in particular
\begin{equation}
W_{n+1}\leqq C(\frac{2^{n+1}}{k})^{q+r-1}Y_{n},\qquad\int_{t_{n}}^{\theta}%
\int_{B_{\rho_{n}}}u_{k_{n+1}}^{r}\leqq C(\frac{2^{n+1}}{k})^{q-1}Y_{n}%
,\qquad\int_{t_{n}}^{\theta}\int_{B_{\rho_{n}}}u_{k_{n+1}}^{r-1}\leqq
C(\frac{2^{n+1}}{k})^{q}Y_{n}. \label{rela}%
\end{equation}
Otherwise
\[
X_{n}\leqq\int_{t_{n}}^{\theta}\int_{B_{\rho_{n}}}(2^{n+1}\theta
^{-1}u_{k_{n+1}}^{r}+2^{q^{\prime}(n+1)}\rho^{-q^{\prime}}u_{k_{n+1}}%
^{r-1}+2^{q(n+1)}\rho^{-q}u_{k_{n+1}}^{q+r-1}),
\]
then from (\ref{rela}),
\begin{equation}
X_{n}\leqq Cb_{0}^{n}f(\theta,\rho,k)Y_{n},\qquad\text{where }f(\theta
,\rho,k)=(\theta^{-1}\frac{1}{k^{q-1}}+\frac{1}{k^{q}}\rho^{-q^{\prime}}%
+\rho^{-q}). \label{reli}%
\end{equation}
for some $b_{0}$ depending on $q,r.$ Then from (\ref{rele}), (\ref{rela}) and
(\ref{reli}),
\[
Z_{n+1}\leqq Cb_{0}^{n}f(\theta,\rho,k)Y_{n},\qquad Y_{n+1}^{1+\frac{\mu}{N}%
}\leqq CZ_{n}^{\frac{q}{N}}(\frac{2^{n+1}}{k})^{(q+r-1)\frac{\mu}{N}}b_{0}%
^{n}f(\theta,\rho,k)Y_{n}^{1+\frac{\mu}{N}}.
\]
Since $Y_{n+1}\leqq Y_{n},$setting $\alpha=q/(N+\mu)$ and denoting by
$b_{1},b$ some new constants depending on $N,q,r,$
\begin{align*}
Y_{n+2}  &  \leqq CZ_{n+1}^{\frac{q}{N+\mu}}b_{1}^{n+1}k^{-(q+r-1)\frac{\mu
}{N+\mu}}f^{\frac{N}{N+\mu}}(\theta,\rho,k)Y_{n+1}\\
&  \leqq C(b_{0}^{n}f(\theta,\rho,k)Y_{n})^{\frac{q}{N+\mu}}b_{1}%
^{n+1}k^{-(q+r-1)\frac{\mu}{N+\mu}}f^{\frac{N}{N+\mu}}(\theta,\rho,k)Y_{n}\\
&  \leqq Cb^{n}f^{\frac{N+q}{N+\mu}}k^{-(q+r-1)\frac{\mu}{N+\mu}}%
Y_{n}^{1+\frac{q}{N+\mu}}:=Db^{n}Y_{n}^{1+\alpha}.
\end{align*}
From \cite[Lemma 4.1]{DiBe1}, $Y_{n}\rightarrow0$ if
\[
Y_{0}^{\alpha}\delta^{1/\alpha}\leqq D^{-1}=C^{-1}k^{(q+r-1)\frac{\mu}{N+\mu}%
}f^{-\frac{N+q}{N+\mu}},
\]
that means
\begin{equation}
k^{qr}\geqq cY_{0}^{q}((\theta^{-1}\frac{1}{k^{q-1}}+\frac{1}{k^{q}}%
\rho^{-q^{\prime}}+\rho^{-q}))^{N+q}. \label{res}%
\end{equation}
For getting (\ref{res}) it is sufficient that
\[
k^{qr+(q-1)(N+q)}\geqq\frac{c}{2}Y_{0}^{q}\theta^{-(N+q)},\text{\quad
}k^{(r+N+q)}\geqq(\frac{c}{2})^{1/q}Y_{0}\rho^{-\frac{N+q}{q-1}},\text{ \quad
and }k^{r}\geqq\frac{c}{2}Y_{0}\rho^{-(N+q)}.
\]
Thus we deduce that
\begin{align}
\sup_{Q_{\infty}}u  &  \leqq C\theta^{-\frac{N+q}{qr+(N+q)(q-1)}}%
(\int_{-\theta}^{\theta}\int_{B_{\rho}}u^{q+r-1})^{\frac{q}{qr+(N+q)(q-1)}%
}\nonumber\\
&  +C\rho^{-\frac{N+q}{(q-1)(r+N+q)}}(\int_{-\theta}^{\theta}\int_{B_{\rho}%
}u^{q+r-1})^{\frac{1}{r+N+q}}+C\rho^{-\frac{N+q}{r}}(\int_{-\theta}^{\theta
}\int_{B_{\rho}}u^{q+r-1})^{\frac{1}{r}}. \label{vol}%
\end{align}
If we set $q+r-1=R$, we obtain (\ref{grf}) for any $R\geqq q$.$\medskip$

Next we consider the case $R<q.$ From (\ref{vol}) we get
\begin{align*}
\sup_{B_{\sigma\rho}\times(-\theta/2,\theta)}u  &  \leqq C\theta^{-\frac
{N+q}{q+(q-1)(N+q)}}(\int_{0}^{\theta}\int_{B_{\rho}}u^{q})^{\frac
{q}{q+(q-1)(N+q)}}\\
&  +C\rho^{-\frac{N+q}{(q-1)(1+N+q)}}(\int_{-\theta}^{\theta}\int_{B_{\rho}%
}u^{q})^{\frac{1}{1+N+q}}+C\rho^{-(N+q)}\int_{-\theta}^{\theta}\int_{B_{\rho}%
}u^{q}\\
&  \leqq C\theta^{-\frac{N+q}{q+(q-1)(N+q)}}(\sup_{B_{\rho}\times0,\theta
)}u)^{\frac{q(q-R)}{q+(q-1)(N+q)}}(\int_{-\theta}^{\theta}\int_{B_{\rho}}%
u^{R})^{\frac{q}{q+(q-1)(N+q)}}\\
&  +C\rho^{-\frac{N+q}{(q-1)(1+N+q)}}(\sup_{B_{\rho}\times0,\theta)}%
u)^{\frac{q(q-R)}{1+N+q)}}(\int_{-\theta}^{\theta}\int_{B_{\rho}}u^{R}%
)^{\frac{1}{1+N+q}}\\
&  +C\rho^{-(N+q)}(\sup_{B_{\rho}\times0,\theta)}u)^{(q-R)}\int_{-\theta
}^{\theta}\int_{B_{\rho}}u^{R}.
\end{align*}
We define
\[
\tilde{\rho}_{n}=(1+2^{-(n+1)})\rho,\qquad\theta_{n}=-(1+2^{-(n+1)}%
)\theta,\qquad\tilde{Q}_{n}=B_{\tilde{\rho}_{n}}\times(\theta_{n}%
,\theta),\qquad M_{n}=\sup_{\tilde{Q}_{n}}u,
\]
hence $M_{0}=\sup_{B_{\rho/2}\times(-\theta/2,\theta)}u.$ We find
\begin{align*}
M_{n}  &  \leqq C\theta^{-\frac{N+q}{q+(q-1)(N+q)}}M_{n+1}^{\frac
{q(q-R)}{q+(q-1)(N+q)}}(\int_{-\theta}^{\theta}\int_{B_{\rho}}u^{R})^{\frac
{q}{q+(q-1)(N+q)}}\\
&  +C\rho^{-\frac{N+q}{(q-1)(1+N+q)}}M_{n+1}^{\frac{q(q-R)}{1+N+q}}%
(\int_{-\theta}^{\theta}\int_{B_{\rho}}u^{R})^{\frac{1}{1+N+q}}+C\rho
^{-(N+q)}M_{n+1}^{q-R}\int_{-\theta}^{\theta}\int_{B_{\rho}}u^{R}.
\end{align*}
We set
\begin{align*}
I  &  =C\theta^{-\frac{N+q}{q+(q-1)(N+q)}}(\int_{-\theta}^{\theta}%
\int_{B_{\rho}}u^{R})^{\frac{q}{q+(q-1)(N+q)}},\\
J  &  =C\rho^{-(N+q)}\int_{0}^{\theta}\int_{B_{\rho}}u^{R},\qquad
L=C\rho^{-\frac{N+q}{(q-1)(1+N+q)}}(\int_{-\theta}^{\theta}\int_{B_{\rho}%
}u^{R})^{\frac{1}{1+N+q}}.
\end{align*}
Note that $R>q-1,$ that means $q-R<1.$ Then from H\"{o}lder inequality,
\[
M_{n}\leqq\frac{1}{2}M_{n+1}+C(I^{\sigma}+L^{\delta}+J^{\frac{1}{R+1-q}%
}),\qquad\sigma=\frac{q+(q-1)(N+q)}{N(q-1)+qR},\quad\delta=\frac{1+N+q}%
{R+N+1}.
\]
Thus $M_{0}\leqq2^{-n}M_{n}+2C(I^{\sigma}+L^{\delta}+J^{\frac{1}{R+1-q}}),$
and finally
\begin{align*}
M_{0}  &  =\sup_{Q_{0}}u\leqq C(I^{\sigma}+L^{\delta}+J^{\frac{1}{R+1-q}%
})=C\theta^{-\frac{N+q}{N(q-1)+qR}}(\int_{-\theta}^{\theta}\int_{B_{\rho}%
}u^{R})^{\frac{q}{N(q-1)+qR}}\\
&  +C\rho^{-\frac{N+q}{(q-1)(R+N+1)}}(\int_{-\theta}^{\theta}\int_{B_{\rho}%
}u^{R})^{\frac{1}{^{R+N+1}}}+C\rho^{-\frac{N+q}{R+1-q}}(\int_{-\theta}%
^{\theta}\int_{B_{\rho}}u^{R})^{\frac{1}{^{R+1-q}}},
\end{align*}
which shows again (\ref{grf}). Then (\ref{grf}) holds for any $R>q-1,$ in
particular for any $R\geqq1$ if $q<2.$\medskip
\end{proof}

Now we prove our second regularing effect due to the effect of the gradient:

\begin{proof}
[Proof of Theorem \ref{effects}]We assume $x_{0}=0.$ Let $\kappa>0$ be a
parameter. From (\ref{gr2}), for any $\rho\in(0,\eta)$ such that $\rho
^{\kappa}\leqq t<\tau,$%
\begin{align*}
\sup_{B_{\frac{\rho}{2}}\times\left[  t-\rho^{\kappa},t\right]  }u  &  \leqq
C\rho^{-\frac{\kappa(N+q)}{qR+N(q-1)}}(\int_{t-\rho^{\kappa}}^{t}\int%
_{B_{\rho}}u^{R})^{\frac{q}{qR+N(q-1)}}\\
&  +C\rho^{-\frac{N+q}{(q-1)(R+N+1)}}(\int_{t-\rho^{\kappa}}^{t}\int_{B_{\rho
}}u^{R})^{\frac{1}{R+N+1}}+C\rho^{-\frac{N+q}{R+1-q}}(\int_{t-\rho^{\kappa}%
}^{t}\int_{B_{\rho}}u^{R})^{\frac{1}{R+1-q}},
\end{align*}
where $C=C(N,q,R).$ Now from estimate (\ref{locint}) of Lemma \ref{cor},
\begin{align*}
\text{sup}_{B_{\eta/2}}u(.,t)  &  \leqq C\rho^{-\frac{\kappa N}{qR+N(q-1)}%
}(\eta^{\frac{N}{R}-q^{\prime}}t+\left\Vert u_{0}\right\Vert _{L^{R}(B_{\eta
})})^{\frac{Rq}{qR+N(q-1)}}\\
&  +C\rho^{-\frac{N+q}{(q-1)(R+N+1)}+\frac{\kappa}{R+N+1}}(\eta^{\frac{N}%
{R}-q^{\prime}}t+\left\Vert u_{0}\right\Vert _{L^{R}(B_{\eta})})^{\frac
{R}{R+N+1}}\\
&  +C\rho^{\frac{-(N+q)+\kappa}{R+1-q}}(\eta^{\frac{N}{R}-q^{\prime}%
}t+\left\Vert u_{0}\right\Vert _{L^{R}(B_{\eta})})^{\frac{R}{R+1-q}}.
\end{align*}
Let $\tau<T,$ and $k_{0}\in\mathbb{N}$ such that $k_{0}\eta^{\kappa}%
/2\geqq\tau.$ For any $t\in\left(  0,\tau\right]  ,$ there exists
$k\in\mathbb{N}$ with $k\leqq k_{0}$ such that $t\in\left(  k\eta^{\kappa
}/2,(k+1)\eta^{\kappa}/2\right]  .$ taking $\rho^{\kappa}=t/(k+1),$ we find
for any $0<t<\tau,$ and $C=C(N,q,R),$
\begin{align}
\text{sup}_{B_{\eta/2}}u(.,t)  &  \leqq C(\frac{1+\eta^{-\kappa}\tau}%
{t})^{\frac{N}{qR+N(q-1)}}(\eta^{\frac{N}{R}-q^{\prime}}t+\left\Vert
u_{0}\right\Vert _{L^{R}(B_{\eta})})^{\frac{Rq}{qR+N(q-1)}}\nonumber\\
&  +C(\frac{1+\eta^{-\kappa}\tau}{t})^{\frac{\frac{N+q}{\kappa(q-1)}-1}%
{R+N+1}}(\eta^{\frac{N}{R}-q^{\prime}}t+\left\Vert u_{0}\right\Vert
_{L^{R}(B_{\eta})})^{\frac{R}{R+N+1}}\nonumber\\
&  +C(\frac{1+\eta^{-\kappa}\tau}{t})^{\frac{\frac{N+q}{\kappa}-1}{R+1-q}%
}(\eta^{\frac{N}{R}-q^{\prime}}t+\left\Vert u_{0}\right\Vert _{L^{R}(B_{\eta
})})^{\frac{R}{R+1-q}}. \label{art}%
\end{align}
If we choose $\kappa$ such that $\kappa\varepsilon(N+q)q^{\prime}\geqq1,$ we
obtain, with $C=C(N,q,R,\eta,\varepsilon,\tau),$
\begin{align}
\text{sup}_{B_{\eta/2}}u(.,t)  &  \leqq Ct^{-\frac{N}{qR+N(q-1)}}(t+\left\Vert
u_{0}\right\Vert _{L^{R}(B_{\eta})})^{\frac{Rq}{qR+N(q-1)}}\nonumber\\
&  +Ct^{\frac{1-\varepsilon}{R+N+1}}(t+\left\Vert u_{0}\right\Vert
_{L^{R}(B_{\eta})})^{\frac{R}{R+N+1}}+Ct^{\frac{1-\varepsilon}{R+1-q}%
}(t+\left\Vert u_{0}\right\Vert _{L^{R}(B_{\eta})})^{\frac{R}{R+1-q}}
\label{gin}%
\end{align}
And in fact the second term can be absorbed by the first one, with a new
constant depending on $\tau,$ and we finally obtain (\ref{epsi}).\medskip
\end{proof}

\begin{remark}
These estimate in $t^{-N/(qR+N(q-1))}$ improves the estimate in $t^{-N/2R}$of
the first regularizing effect when $q>q_{\ast}$. And it appears to be sharp.
Indeed consider for example the particular solutions given in \cite{QW} of the
form $u_{C}(x,t)=Ct^{-a/2}f(\left\vert x\right\vert /\sqrt{t}),$ where
$\eta\mapsto f(\eta)$ is bounded, $f^{\prime}(0)=0$ and $\lim_{\eta
\rightarrow\infty}\eta^{a}f\left(  \eta\right)  =C.$ Then $u_{C}$ is solution
of (\ref{un}) in $Q_{\mathbb{R}^{N}\backslash\left\{  0\right\}  ,\infty},$
with initial data $C\left\vert x\right\vert ^{-a}.$ When $a<N,$ that means
$q>q_{\ast}$, then $\left\vert x\right\vert ^{-a}\in L_{loc}^{R}%
(\mathbb{R}^{N})$ for any $R\in\left[  1,N/a\right)  $, and $u_{C}$ is
solution in $Q_{\mathbb{R}^{N},\infty}.$ We have $\sup_{B_{1}}%
u(.,t)=Cf(0)t^{-a/2}.$ Taking $N/R=a(1+\delta),$ for small $\delta>0$ our
estimate near $t=0$ gives $\sup_{B_{1}}u(.,t)\leqq C_{\delta}t^{-\frac{a}%
{2}(1+\delta)}.$
\end{remark}

\end{document}